\documentclass[11pt,parskip=half]{article}

% Page geometry
\usepackage{geometry}                
\geometry{a4paper}                   
% prevent that figures get own float page too easily

% Begin new paragraphs with a skip, and allow the line before to be 'full'
\usepackage[parfill]{parskip}    

% Utf8 encoding
\usepackage[utf8]{inputenc}

% For graphics
\usepackage{graphicx}

% Math support
\usepackage{amsmath, amssymb, amsthm, amsfonts}
\usepackage{mathtools}
\usepackage{bbm} % for blackboard 1

% Only show referenced equations
%\mathtoolsset{showonlyrefs=true, showmanualtags=true}
\usepackage{hyperref}

\usepackage[noabbrev,capitalise]{cleveref}

% Colors and color boxes, captions
\usepackage[svgnames]{xcolor}
\usepackage[breakable,theorems]{tcolorbox}
\usepackage[font=small,labelfont=bf]{caption}

\definecolor{DarkBlue}{HTML}{2c7f90}
\definecolor{DarkBrown}{HTML}{88632a}

% Nicer font
\usepackage{libertine}
\usepackage[libertine]{newtxmath}
\usepackage{inconsolata}
\usepackage{microtype}

% enumerations
\usepackage{enumitem}

% Nicer tables
\usepackage{booktabs}

% Quotes
\usepackage{csquotes}

% less vertical space for \paragraph
\usepackage{titlesec}
% default: \titlespacing*{\paragraph}{0pt}{3.25ex plus 1ex minus .2ex}{1em}
\titlespacing*{\paragraph}{0pt}{.6\baselineskip plus .3\baselineskip minus .2\baselineskip}{1em}

% Importing matplotlib-generated pgf figures
\usepackage{pgf}

% Bibliography, biber for the win!
\usepackage[backend=biber,
            giveninits=true,
            mincitenames=1,
            maxcitenames=2,
            maxbibnames=99,
            uniquename=false,
            uniquelist=false,
            style=ext-authoryear,
            url = false,
            eprint = false,
            isbn = false,
            doi = false,
            articlein = false,
            dashed = false,
           ]{biblatex}

% Make citeyear command put a hyperlink to the bibliography
\DeclareCiteCommand{\citeyear}
    {}
    {\bibhyperref{\printdate}}
    {\multicitedelim}
    {}

% Theorems, Lemmas, ...

\newtcbtheorem[crefname={Remark}{Remark}]{remark}{Remark}{%
  theorem style=plain, 
  coltitle=white!20!black,
  colframe=white,
  colback=black!0!white,
  breakable
  }{rem}

\newtcbtheorem[crefname={Definition}{Definition}]{definition}{Definition}{%
  theorem style=plain, 
  coltitle=Violet!45!black,
  colframe=white,
  colback=green!0!white,
  breakable
  }{def}

\newtcbtheorem[crefname={Lemma}{Lemma}]{lemma}{Lemma}{%
  theorem style=plain, 
  coltitle=green!30!black,
  colframe=white,
%  colback=green!4!white,
  colback=green!0!white,
  breakable
  }{lem}

\newtcbtheorem[crefname={Theorem}{Theorem}]{theorem}{Theorem}{%
  theorem style=plain, 
  coltitle=orange!45!black,
  colframe=white,
%  colback=orange!6!white,
  colback=orange!0!white,
  breakable
  }{thm}

\newtcbtheorem[crefname={Corollary}{Corollary}]{corollary}{Corollary}{%
  theorem style=plain, 
  coltitle=orange!45!black,
  colframe=white,
%  colback=orange!6!white,
  colback=orange!0!white,
  breakable
  }{cor}

\newtcbtheorem[crefname={Proposition}{Proposition}]{proposition}{Proposition}{%
  theorem style=plain, 
  coltitle=blue!35!black,
  colframe=white,
  colback=blue!3!white,
  breakable
  }{prop}

\newtcbtheorem[crefname={Example}{Example}]{example}{Example}{%
  theorem style=plain, 
  coltitle=white!20!black,
  colframe=white,
  colback=white,
  breakable
  }{ex}

% Commands

% Paragraph that shows up in toc                             

% Organization

%\renewcommand{\todo}[1]{}

% Spaces
\newcommand{\PC}{\mathcal{P}}

\newcommand{\RR}{\mathbb{R}}
\newcommand{\NN}{\mathbb{N}}

\newcommand{\interior}{\mathrm{int}}
\newcommand{\closure}{\mathrm{cl}}

% Base spaces
\newcommand{\X}{X}
\newcommand{\Y}{Y}
\newcommand{\XY}{{\X\times\Y}}
\newcommand{\Xres}{{\tilde\X}}
\newcommand{\Yres}{{\tilde\Y}}
\newcommand{\XYres}{{\Xres\times\Yres}}
\newcommand{\cres}{{\tilde{c}}}
\newcommand{\fres}{{\tilde{f}}}

% Optimal transport
\newcommand{\OT}{T}
% c-concave functions
\newcommand{\CC}{{S}}
\newcommand{\CCc}{{\CC_c}}
\newcommand{\CCcc}{\smash{\CC^c_{\raisebox{1.25ex}{$\scriptstyle c$}}}}
% range of c-concave functions
\newcommand{\RRext}{\RR\cup\{-\infty\}}

% Probability

% Mappings

\newcommand{\pr}{p} % projection

\newcommand{\ind}{\mathbbm{1}}

\newcommand{\domain}{\mathrm{domain}}
\newcommand{\range}{\mathrm{range}}

% Helpers
\newcommand{\dif}{\,\mathrm{d}}
\newcommand{\supp}{\mathrm{supp}}

% Optimal transport
\newcommand{\ot}{T}
\newcommand{\otmap}{t}
\newcommand{\couple}{\mathcal{C}}

\newcommand{\osupp}{\Gamma}

\bibliography{sources}
\graphicspath{{figures/}}

\title{On the Uniqueness of Kantorovich Potentials}

% heavily abuse the author field
\author{ 
  Thomas Staudt%
  \thanks{Institute for Mathematical Stochastics, University of Göttingen}
  \thanks{Cluster of Excellence: Multiscale Bioimaging (MBExC), University Medical Center, Göttingen} \\[-0.25ex]
  {\small \href{mailto:thomas.staudt@uni-goettingen.de}{thomas.staudt@uni-goettingen.de}} \vspace{1.2ex}\\
  Shayan Hundrieser%
  \footnotemark[1] \footnotemark[2] \\[-0.25ex]
  {\small \href{mailto:s.hundrieser@math.uni-goettingen.de}{s.hundrieser@math.uni-goettingen.de}} \vspace{1.2ex}\\
  Axel Munk%
  \footnotemark[1] \footnotemark[2] \,\thanks{Max Planck Institute for Multidisciplinary Sciences, Göttingen} \\[-0.25ex]
  {\small \href{mailto:munk@math.uni-goettingen.de}{munk@math.uni-goettingen.de}}
}

% remove date
\date{\vspace{-3ex}}

% Document

\begin{document}

\maketitle

\begin{abstract}
  \noindent
  Kantorovich potentials denote the dual solutions of the renowned optimal transportation problem.
  Uniqueness of these solutions is relevant from both a theoretical and an algorithmic point of view, and has recently emerged as a necessary condition for asymptotic results in the context of statistical and entropic optimal transport.
  In this work, we challenge the common perception that uniqueness in continuous settings is reliant on the connectedness of the support of at least one of the involved measures, and we provide mild sufficient conditions for uniqueness even when both measures have disconnected support.
  Since our main finding builds upon the uniqueness of Kantorovich potentials on connected components, we revisit the corresponding arguments and provide generalizations of well-known results.
  To this end, we introduce the notion of induced regularity and employ it to extend the regularity theory of Kantorovich potentials advanced by \textcite{gangbo1996geometry} to more general cost functions in $\RR^d$ and to geodesic spaces.
  
  % In particularWe introduce the notion of induced regularity, which 
  % TODO: "induced regularity helps to generalize classical results by gangbo and also provides results for geodesic spaces."
  % Several auxiliary findings regarding the continuity of Kantorovich potentials,
  % for example in geodesic spaces, are established along the way.
\end{abstract}

\thanks{\small \emph{Keywords:}
Kantorovich potentials, uniqueness, optimal transport, duality theory, regularity}

\thanks{\small \emph{MSC 2020 Subject Classification:} primary 49Q22, 46N60, 90C08; secondary 28A35, 49Q20, 62E20}

\section{Introduction}

Optimal transport theory addresses the question how mass can be moved from a source to a target distribution in the most cost-efficient way.
While the history of this mathematical quest is long and rich, dating back to \textcite{monge} and then \textcite{kantorovich}, who framed the theory in its modern formulation, it has drawn an enormous amount of attention in the past decades.
In-depth monographs cover analytical, probabilistic, and geometric \parencite{rachev1998massI,villani2008optimal,santambrogio2015optimal,ambrosio2021lectures,figalli2021an} as well as computational \parencite{peyre2019computational} and statistical \parencite{panaretos2020invitation} perspectives, while countless applications span from economics \parencite{galichon2016optimal} and biology \parencite{Schiebinger19,tameling2021colocalization, naas2024multimatch} to machine learning (see \cite{montesuma2023recent} for a survey).

For given probability distributions $\mu\in\PC(\X)$ and $\nu\in\PC(\Y)$ on measurable spaces $X$ and $Y$, the optimal transport problem is to find the minimal transportation cost
\begin{subequations}\label{eq:ot}
\begin{equation}\label{eq:otprimal}
  \ot_c(\mu, \nu) = \inf_{\pi\in\couple(\mu, \nu)} \int\! c\dif\pi,
\end{equation}
where $c\colon\XY\to\RR_+$ denotes a non-negative measurable \emph{cost function} that quantifies the effort of moving one unit of mass between elements in $\X$ and $\Y$, and where $\couple(\mu, \nu)\subset\PC(\XY)$ is the set of probability measures with marginal distributions $\mu$ and $\nu$.
Any solution $\pi$ to \eqref{eq:otprimal} is called an \emph{optimal transport plan}.
Conditions under which optimal transport plans exist are well-known, see for example \textcite{villani2008optimal} for the general framework of Polish spaces and lower-semicontinuous cost functions.
Under these assumptions, the optimal transport problem~\eqref{eq:otprimal} also admits a dual formulation that reliably serves as a fertile ground for investigating structural properties of $\ot_c$.
In fact, it holds that 
\begin{equation}\label{eq:otdual}
  \ot_c(\mu, \nu)
  =
  \sup_{f\in L_1(\mu)} \int\! f \dif\mu + \int\! f^c\dif\nu,
\end{equation}%
\end{subequations}
where $f^c$, the \emph{$c$-transform} of $f$, denotes the largest function satisfying $f(x) + f^c(y) \le c(x, y)$ for all $x\in\X$ and $y\in\Y$.
Specific optimizers $f$ for the dual problem -- namely, those which can be written as the $c$-transform of a function $g$ on $\Y$ -- are called \emph{Kantorovich potentials}, which exist under mild conditions \parencite[Theorem~5.10]{villani2008optimal}.

Recently, advances in statistical and entropic optimal transport, i.e., when \eqref{eq:otprimal} is regularized by an entropy penalty, have highlighted the need for conditions that ensure the \emph{uniqueness} of said potentials (see \cref{sec:related} for a literature overview).
Uniqueness here means that any two Kantorovich potentials $f$ for a given optimal transport problem only differ by a constant $\mu$-almost surely.
This is equivalent to $\nu$-almost sure uniqueness of the $c$-transformed potentials $f^c$ (see \cref{lem:uniqueness}).
Practical guarantees for uniqueness have mainly been derived in continuous settings, where $\mu$, $\nu$ and $c$ are sufficiently regular.
A universal requirement is a connected support of at least one of the measures $\mu$ or $\nu$.
This requirement emerges naturally, since the standard proof technique -- concluding uniqueness of the potential from uniqueness of its gradients -- cannot bridge separated connected components.
In fact, it is easy to construct trivial counter examples, like $\X = \Y = [0, 1] \cup [2, 3]$ with uniformly distributed $\mu = \nu$, where uniqueness of the Kantorovich potentials is well-known to fail for typical cost functions (see \cref{lem:ambiguous} in \cref{app:proofs}). 
At the same time, however, dual uniqueness properties for transportation problems on finite spaces have long been established via techniques from linear programming \parencite{klee1968facets,hung1986degeneracy}.
Even though these settings trivially involve disconnected supports, they do feature unique Kantorovich potentials whenever the measures $\mu$ and $\nu$ are \emph{non-degenerate}, meaning that all proper non-empty subsets $\X'\subset\X$ and $\Y'\subset\Y$ are assigned different masses $\mu(\X') \neq \nu(\Y')$.

The main contribution of our work is to make this classical finite-space observation accessible to the continuous world.
This is realized by lifting the uniqueness of dual solutions on connected components to uniqueness on the full space under suitable regularity guarantees.

\begin{theorem}{}{uniqueintro}
  Let $\X$ and $\Y$ be Polish, $\mu \in\PC(\X)$, $\nu \in \PC(\Y)$, and $c\colon
  \XY\to\RR_+$ continuous with $\OT_c(\mu, \nu) < \infty$.
  Let $\smash{(X_i)_{i\in I}}$ and $\smash{(Y_j)_{j\in J}}$ be the connected components of the supports of $\mu$ and $\nu$, where the index sets $I$ and $J$ are assumed to be finite.
  If
  \begin{enumerate}[topsep=1ex, parsep=0.5ex, font={\normalfont\color{orange!45!black}}, label=\theenumi)]
    \item $f^c$ is continuous on the support of $\nu$ for all Kantorovich potentials $f$,
    \item $\mu$ and $\nu$ are non-degenerate, meaning $\mu\big(\bigcup_{i\in \tilde{I}} X_i\big) \neq \nu\big(\bigcup_{j\in \tilde{J}} Y_j\big)$ for all proper non-empty subsets $\smash{\tilde{I}}\subset I$ and $\smash{\tilde{J}}\subset J$,
    \item there is an optimal $\pi\in\couple(\mu, \nu)$ with unique Kantorovich potentials for the optimal transport problem between $\mu_i = \mu|_{X_i}/\mu(X_i)$ and $\nu_i = \pi(X_i \times \cdot)/\mu(X_i)$ for all $i\in I$,
  \end{enumerate}
  then the Kantorovich potentials between $\mu$ and $\nu$ are uniquely determined.
\end{theorem}

% %The following statement is an informal version of our
% %central result, \cref{thm:uniquedisconnected} in \cref{sec:uniqueness}. 
% \begin{theorem*}{Informal}
%   Assume that $\mu$ and $\nu$ are non-degenerate (in a suitable sense) and that
%   each optimal potential $f^c$ is continuous. If the optimal transport problems
%   restricted to the connected components of the support of $\mu$ have unique
%   Kantorovich potentials, then the full optimal transport problem has unique
%   Kantorovich potentials as well.
% \end{theorem*}

At the expense of a more technical presentation, the first two conditions in \cref{thm:uniqueintro} can be weakened.
This is formalized in the more general \cref{thm:uniquedisconnected} in \cref{sec:uniquedisconnected}. Here, the index set $J$ is relaxed to be countable, and conditions that allow $I$ to be countable are provided as well (\cref{cor:uniquedisconnected}).
% Under suitable circumstances, which are discussed in \cref{sec:uniqueness}, $I$ may also be countable.
%
The proof of these statements takes advantage of the fact that non-degeneracy causes the Kantorovich potentials on the individual connected components of $\mu$ to be \enquote{linked} in a unique way (see \cref{sec:uniquenessproof} for proof details).
Independently of our work, a related linking idea was recently also used by \textcite{yang2023} to derive a more specialized result on uniqueness of dual optimal transport potentials in Euclidean spaces.

In order to apply \cref{thm:uniquedisconnected}, one requires criteria when Kantorovich potentials on connected domains (and their c-transforms) can be assumed to be continuous (condition 1) and unique (condition 3).
% Continuity is of 
% For \cref{thm:uniquedisconnected} to be useful, it should be combined with dual uniqueness results for measures with connected support.
To systematically address the first of these issues, we introduce the notion of \emph{induced regularity} (\cref{def:regularity}) and harness it to document general continuity properties of Kantorovich potentials (\cref{lem:regularitycompact} and the remarks thereafter).
% We therefore develop and document general dual regularity properties in \cref{sec:potentials}, before later 
To tackle the second issue, we carefully re-examine
% This motivates us to look into general dual regularity properties in \cref{sec:potentials}, and to later rexeamine 
the established proof strategy for uniqueness in the connected setting under the lense of induced regularity.
By cautiosly dissecting which assumptions on $\mu$, $\nu$, and $c$ are actually necessary for the argumentation, we derive a novel uniqueness statement that generalizes many of the hitherto published results.
This statement, \cref{thm:uniqueconnected} in \cref{sec:uniqueconnected}, covers settings where $\X$ is a smooth manifold and $\Y$ is allowed to be a generic Polish space.
Particular scenarios where the assumptions of \cref{thm:uniqueconnected} can
easily be checked include settings where the space $\Y$ is compact
(\cref{cor:uniqueconnectedcompact}) or where the cost function $c$ satisfies sufficient regularity conditions outside of compact sets
(\cref{cor:uniqueconnectedlipschitz} and \ref{cor:uniqueconnectedboundary}).
We also derive novel results, which generalize and complement classical observations by \textcite{gangbo1996geometry}, for the pertinent case of sufficiently quickly growing cost functions in Euclidean (\cref{thm:euclideaninteriorregularity}) as well as geodesic spaces (\cref{thm:interiorregularity}).

Altogether, our findings highlight that the requirements for unique Kantorovich potentials, both in connected and disconnected settings, are often primarily of topological nature, i.e., concerning the shape of the support of $\mu$ or $\nu$.
In particular, we establish that dual uniqueness can be expected to hold much more frequently than previously anticipated.
% while the actual distribution of mass is less important and causes problems only due to particular degeneracies.
For example, in the setting $\X = \Y = [0, 1] \cup [2, 3]$ mentioned earlier, combining \cref{thm:uniquedisconnected} and \cref{cor:uniqueconnectedcompact} establishes that Kantorovich potentials are indeed unique if $\mu$ and $\nu$ are supported on all of $\X$ and satisfy $\mu([0, 1]) \neq \nu([0, 1])$.
This holds for general differentiable costs without further assumptions on $\mu$ or $\nu$ such as the existence of a Lebesgue density.
In this sense, failure of uniqueness due to disconnected supports is typically found to be an exception caused by a specific degeneracy, and not the rule.

The article is structured as follows.
Relevant literature is surveyed in \cref{sec:related}.
Afterwards, the notion of $c$-concavity is introduced and the regularity of Kantorovich potentials is discussed in \cref{sec:potentials}.
Some technical results that cope with restrictions of the base spaces $\X$ and $\Y$ are emphasized as well.
\cref{sec:uniqueness} opens with a clarification of equivalent ways to define almost surely unique Kantorovich potentials.
Afterwards, uniqueness statements for probability measures with disconnected support are presented and discussed, including its consequences for the semi-discrete setting and countably discrete spaces (\cref{sec:uniquedisconnected}).
We then turn to uniqueness statements for measures with connected support on smooth manifolds (\cref{sec:uniqueconnected}).
\cref{sec:interior} contributes some findings on continuity properties of Kantorovich potentials for fast-growing cost functions, revealing that discontinuities are often confined to the boundary of the support.
Finally, \cref{sec:uniquenessproof} contains the proof of \cref{thm:uniquedisconnected} and \cref{app:proofs} documents auxiliary observations and arguments that have been omitted in the main text.

\paragraph{Notation.}
Throughout the manuscript, $\X$ and $\Y$ denote Polish spaces, i.e.,  completely metrizable and separable topological spaces.
For $A\subset\X$, we write $\interior(A)$ for its interior, $\closure(A)$ for its closure, and $\partial A = \closure(A)\setminus\interior(A)$ for its boundary.
The Cartesian projection from a product of spaces to a component $\X$ is denoted by $\pr_\X$.
Real-valued functions $f$ and $g$ on spaces $\X$ and $\Y$ can be lifted to $\XY$ via the operation $(x, y) \mapsto f(x) + g(y)$, which we denote by $f\oplus g$.
The set of Borel probability measures, or distributions, on a Polish space $\X$ are called $\PC(\X)$.
The support of a probability distribution $\mu\in\PC(\X)$, which is the smallest closed set $A\subset\X$ such that $\mu(A) = 1$, is denoted by $\supp\,\mu$.
We write $\mu\otimes\nu\in\PC(\XY)$ to denote the product of probability measures on Polish spaces $\X$ and $\Y$. Integration $\int\! f \dif\mu$ of a real-valued function $f$ on $X$ is abbreviated by juxtaposition $\mu f$.

If $M$ is a smooth manifold (without boundary), we call $f\colon M \to \RR$ locally Lipschitz if $f \circ \varphi^{-1}$ is locally Lipschitz for every chart $\varphi$ of an atlas of $M$.
Similarly, we call $f$ locally semiconcave if $f \circ \varphi^{-1}$ is locally semiconcave for every chart $\varphi$ of an atlas of $M$.
By this we mean that each point in $\range\,\varphi$ admits $\lambda > 0$ and a convex neighbourhood $V \subset \range\,\varphi$ such that 
\begin{equation}\label{eq:semiconcave}
  v \mapsto f\big(\varphi^{-1}(v)\big) - \lambda\|v\|^2
\end{equation}
is concave on $V$.
A family of functions $f_y\colon M\to\RR$ for $y\in\Y$ is called \emph{locally Lipschitz (or locally semiconcave) uniformly in $y$} if $f_y$ is locally Lipschitz (or locally semiconcave) with neighborhoods and constants that do not depend on $y$.
Similarly, the functions $f_y$ are \emph{locally Lipschitz locally uniformly in $y$} if the functions $f_y$ are locally Lipschitz uniformly in $y\in K$ for each compact set $K\subset\Y$.
We furthermore say that a Borel set $A$ has \emph{full Lebesgue measure in charts of $M$} if $\range\,\varphi\setminus\varphi\big(A \cap \domain\,\varphi\big)$ is a Lebesgue null set for each chart $\varphi$ of an atlas of $M$.

\section{Related work}
\label{sec:related}

While our efforts in this work focus on Kantorovich potentials, most of the foundational research on the optimal transport problem \eqref{eq:ot} has targeted properties of the primal solutions.
Significant advances, which provided sufficient conditions for optimal plans to be concentrated on the graph of a uniquely determined function (the \emph{optimal transport map}), have been achieved in Euclidean spaces \parencite{smith1987note,cuesta1989notes,brenier1991polar,gangbo1996geometry}, on manifolds \parencite{mccann2001polar,figalli2007existence,villani2008optimal,figalli2011local}, and more recently also in more general metric spaces \parencite{bertrand2008existence,gigli2012optimal,ambrosio2014slopes}.
Strong regularity properties of optimal transport maps under squared Euclidean costs have first been established by the seminal work of \citeauthor{caffarelli1990localization} (\citeyear{caffarelli1990localization},\citeyear{caffarelli1991some},\citeyear{caffarelli1992regularity}) for probability measures with bounded convex support (with recent extensions to unbounded settings by \textcite{Cordero2019Regularity}).
Further insights were obtained by \textcite{ma2005regularity} and \textcite{loeper2009regularity} for $\mathcal{C}^4$-costs that satisfy a certain differential inequality (the \emph{Ma-Trudinger-Wang condition}).
Later, \textcite{figalli2015partial} demonstrated regularity of optimal maps outside of \enquote{bad sets} of measure zero under more general conditions.
A related line of research is devoted to the analysis of optimal transport plans that are not necessarily induced by a transport map. Uniqueness results for the primal solution in this context were, for example, obtained by \textcite{ahmad2011optimal} or \textcite{mccann2016intrinsic}, and more recently by \textcite{moameni2020uniquely}.

Many of the techniques employed to characterize the primal solutions of \eqref{eq:otprimal} crucially depend on the duality theory \eqref{eq:otdual}.
In fact, the gradients of dual solutions are intimately related to optimal transport maps (see \cite[Chapter~10]{villani2008optimal}, for an in-depth treatment).
Still, dual solutions are commonly not studied as objects of interest in their own right, and certain properties, such as the uniqueness of Kantorovich potentials, have received considerably less attention when compared to their primal counterparts.
Recent developments, however, have emphasized the utility of dual uniqueness. 
For example, in the context of statistical optimal transport, uniqueness of Kantorovich potentials ensures a Gaussian limit distribution for the empirical optimal transport cost \parencite{sommerfeld2018inference, del2019central, tameling2019empirical, del2021centralSemidiscrete, del2021central, hundrieser2024unifying, hundrieser2024empirical}.
This is related to the observation that unique Kantorovich potentials cause the functional derivative of the optimal transport cost with respect to one of its argument measures (i.e., $\mu$ or $\nu$) to be linear.
In this case, the unique Kantorovich potential $f$ coincides with the subgradient of the functional $\mu \mapsto T_c(\mu, \nu)$ (\cite[Section~7.2]{santambrogio2015optimal}), which makes uniqueness a common requirement when working with optimal transport gradient flows in $\PC(\X)$ (see \cite{ambrosio2006} for a comprehensive treatment of gradient flows in probability spaces).
Furthermore, recent results on the convergence of entropically regularized optimal transport to its vanilla counterpart (as the regularization parameter tends to zero) utilize dual uniqueness as a critical assumption as well \parencite{altschuler2021asymptotics,bercu2021asymptotic, bernton2021entropic,nutz2021entropic, hundrieser2024limit}.

For continuous measures in Euclidean spaces, several sufficient criteria for unique Kantorovich potentials are available.
The involved arguments are well-known from the above mentioned literature on optimal transport maps, and depend on (i) sufficient local regularity of dual solutions $f$ (e.g., local Lipschitz continuity)  and (ii) exploiting that the gradient of $f$ is (where it exists) determined by the cost function and an (arbitrary) optimal transport plan.
Notable results that adopt this strategy include Proposition~7.18 in \textcite{santambrogio2015optimal}, which is applicable in compact settings, or Appendix~B of \textcite{bernton2021entropic} and Corollary~2.7 of \textcite{del2021central}, both relying on regularity properties of dual solutions derived by \textcite{gangbo1996geometry} for a certain family of strictly convex costs.
Meanwhile, Remark~10.30 in \textcite{villani2008optimal} sketches a general argument for uniqueness of Kantorovich potentials on Riemannian manifolds.

\section{Kantorovich Potentials}
\label{sec:potentials}

Let $c\colon\XY \to \RR_+$ be a non-negative cost function that compares elements of Polish spaces $\X$ and $\Y$.
As laid out comprehensively in \textcite{villani2008optimal} or \textcite{santambrogio2015optimal}, a central part of the duality theory of optimal transport is the notion of $c$-conjugacy.
For any $g\colon\Y\to\RRext$, its associated \emph{$c$-transform} is defined via
\begin{subequations}
\begin{equation}\label{eq:ctransformY}
  g^c\colon\X\to\RRext,
  \qquad
  g^c(x) = \inf_{y\in\Y} c(x, y) - g(y).
\end{equation}
Any function $f\colon\X\to\RRext$ that coincides with $g^c$ for some $g\colon\Y\to\RRext$ and that is not equal $-\infty$ everywhere is called \emph{$c$-concave on $\X$}.
The set of all functions that are $c$-concave on $\X$ is denoted by $\CCc$.
Since the roles of $f$ and $g$ can easily be exchanged in these definitions, we also write
\begin{equation}\label{eq:ctransformX}
  f^c\colon\Y\to\RRext,
  \qquad
  f^c(y) = \inf_{x\in\X} c(x, y) - f(x)
\end{equation}
\end{subequations}
for the $c$-transform of a function $f\colon\X\to\RRext$.
Any $g\colon\Y\to\RRext$ that originates from a $c$-transform and that is not equal $-\infty$ everywhere is called \emph{$c$-concave on $\Y$}.
Since $f = f^{cc}$ and $g = g^{cc}$ for any $c$-concave $f$ or $g$ (see \cite[Proposition~1.34]{santambrogio2015optimal}), the set $\CCcc$ of pointwise $c$-transformed elements of $\CCc$ equals the set of functions that are $c$-concave on $\Y$.
Under continuity of the cost function $c$, all functions in $\CCc$ and $\CCcc$ are upper-semicontinuous and thus Borel measurable.
Note that our notation accentuates the asymmetry in the operations \eqref{eq:ctransformY} and \eqref{eq:ctransformX} less explicitly than \textcite{santambrogio2015optimal}, who denotes \eqref{eq:ctransformY} as $\bar{c}$-transform, or \textcite{villani2008optimal}, who picks a different sign convention and contrasts $c$-concavity to $c$-convexity.

For any given $f\colon\X\to\RRext$, the $c$-transform $g = f^c$ designates the largest function that satisfies $f \oplus g \le c$.
The set of points in $\XY$ where equality holds is denoted as \emph{$c$-subdifferential of $f$}, and we write
\begin{equation}\label{eq:subdifferential}
  \partial_c f
  =
  \big\{(x,y)\in\XY\,\big|\, f(x) + f^c(y) = c(x, y)\big\}.
\end{equation}
This set is closed when $c$ is continuous and $f$ is upper-semicontinuous (so in particular when $f$ is $c$-concave).
If $f$ is a solution of the dual optimal transport problem \eqref{eq:otdual}, it is clear that any optimal transport plan has to be concentrated on $\partial_c f$.
The following statement, which is a special case of Theorem~5.10 (ii) in
\textcite{villani2008optimal}, establishes that (generalized) $c$-concave dual
solutions exist under mild conditions.

\begin{theorem*}{Existence of optimal solutions}
  Let $\X$ and $\Y$ be Polish and $c\colon\XY\to\RR_+$ continuous.
  For any $\mu\in\PC(\X)$ and $\nu\in\PC(\Y)$ with $\OT_c(\mu, \nu) < \infty$, there exists an optimal transport plan $\pi\in\couple(\mu, \nu)$ and a $c$-concave function $f\in\CCc$ such that
  \begin{equation}\label{eq:existence}
    \OT_c(\mu, \nu) = \pi c = \pi \big(f \oplus f^c\big).
  \end{equation}
\end{theorem*}

We emphasize that the function $f$ in this statement does not have to be $\mu$-integrable, nor does $f^c$ have to be $\nu$-integrable.
Ensuring integrability requires further conditions \parencite[Remark~5.14]{villani2008optimal}, for instance $(\mu\otimes\nu)\,c < \infty$.
Only then can $f$ be viewed as a dual optimizer of \eqref{eq:otdual} in the strict sense
\begin{equation*}
  T_c(\mu, \nu) = \pi c = \mu f + \nu f^c.
\end{equation*}
For our ends, however, the more general solutions provided by \eqref{eq:existence} are sufficient.
We call these solutions \emph{(generalized) Kantorovich potentials}, and we write $f\in\CCc(\mu, \nu)\subset\CCc$ or $f^c\in\CCcc(\mu, \nu)\subset\CCcc$ to emphasize their dependence on $\mu$ and $\nu$.
We stress that any $f\in\CCc(\mu, \nu)$ satisfies \eqref{eq:existence} for \emph{all} optimal transport plans $\pi$, so $f$ does not favor a particular primal solution \parencite[Lemma~1.1]{beiglbock2011duality}.

Note that the existence of solutions as well as duality statements for optimal transportation problems have also been established for non-continuous cost functions \parencite{villani2008optimal,beiglbock2011duality} or more general spaces \parencite{ruschendorf2007monge}.
Two major advantages of working with continuous costs are the closedness of the $c$-subdifferential $\partial_c f$ for any $f\in\CCc(\mu, \nu)$ and the (related) upper-semicontinuity of $c$-conjugate functions.
The former implies
\begin{equation*}
  \supp\,\pi\subset\partial_c f
\end{equation*}
for any optimal transport plan $\pi$, a property which we will often resort to, while the latter is needed for continuity results and permits sidestepping measurability issues.

\subsection{Regularity}
Due to their nature as $c$-concave functions, Kantorovich potentials inherit certain regularity properties from the cost function $c$.
For example, if $c$ is semiconcave in its first argument, then $f\in\CCc$ is also semiconcave as an infimum over semiconcave functions.
Similarly, if the family $\{c(\cdot, y)\,|\,y\in\Y\}$ of partially evaluated costs is (locally) equicontinuous, then $f$ shares the respective (local) modulus of continuity (\cite[Section~1.2]{santambrogio2015optimal}).
Imposing conditions of this form is hence a convenient way to guarantee continuity of $c$-concave functions, which are in general only upper-semicontinuous (for continuous costs).
In the following, we introduce tools that give us a more fine-grained control over the continuity of Kantorovich potentials.
We begin with continuity along sequences in $\partial_c f$.

\begin{lemma}{}{regularity}
  Let $X$ and $Y$ be Polish, $c\colon\XY\to\RR_+$ continuous, and $f\in\CCc$.
  If $(x_n, y_n)_{n\in\NN}$ is a sequence in $\partial_cf$ that converges to $(x, y)\in\partial_c f$, then $f(x_n) \to f(x)$ and $f^c(y_n) \to f^c(y)$ as $n\to\infty$.
\end{lemma}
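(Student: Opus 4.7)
The plan is to exploit the interplay between upper-semicontinuity of $f$ and $f^c$ and the continuity of $c$, using the fact that the subdifferential condition forces the \emph{sum} $f(x_n) + f^c(y_n)$ to track $c(x_n, y_n)$ exactly.

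First I would record the basic regularity: since $f \in \CCc$, it is an infimum of continuous functions $y \mapsto c(\cdot,y) - g(y)$ (for the generator $g$), hence upper-semicontinuous. The same reasoning applied to $f^{cc} = f$ shows $f^c$ is upper-semicontinuous as well. Moreover, because $f \in \CCc$ is not identically $-\infty$, picking $x_0$ with $f(x_0) > -\infty$ yields $f^c(y) \le c(x_0,y) - f(x_0) < \infty$ for all $y$, so $f^c$ (and symmetrically $f$) never takes the value $+\infty$. For any point $(x,y) \in \partial_c f$ the equation $f(x) + f^c(y) = c(x,y) \in \RR_+$ then forces \emph{both} $f(x)$ and $f^c(y)$ to be finite, removing any worries about indeterminate $\infty - \infty$ arithmetic on $\partial_c f$.

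Next I would combine these ingredients. Continuity of $c$ gives $c(x_n, y_n) \to c(x,y)$, and membership in $\partial_c f$ translates this into
\begin{equation*}
  f(x_n) + f^c(y_n) \;\longrightarrow\; f(x) + f^c(y).
\end{equation*}
Upper-semicontinuity supplies $\limsup_n f(x_n) \le f(x)$ and $\limsup_n f^c(y_n) \le f^c(y)$. Rewriting $f(x_n) = \big(f(x_n) + f^c(y_n)\big) - f^c(y_n)$ and taking liminfs gives
\begin{equation*}
  \liminf_n f(x_n) \;\ge\; \big(f(x) + f^c(y)\big) - \limsup_n f^c(y_n) \;\ge\; f(x),
\end{equation*}
so $\liminf_n f(x_n) \ge f(x) \ge \limsup_n f(x_n)$, which yields $f(x_n) \to f(x)$. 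The symmetric argument applied to $f^c$ gives $f^c(y_n) \to f^c(y)$.

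There is no real obstacle here; the only point needing mild care is ensuring that the subtraction in the liminf step is legitimate, i.e.\ that $f^c(y)$ is finite, which is handled at the outset. Everything else is bookkeeping around upper-semicontinuity versus the exact pointwise identity on $\partial_c f$.
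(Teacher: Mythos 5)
Your proof is correct and follows essentially the same route as the paper: upper-semicontinuity of $f$ and $f^c$, continuity of $c$, and the exact identity $f\oplus f^c = c$ on $\partial_c f$ combined into a chain of inequalities that must collapse to equalities. Your explicit $\liminf$ step (and the preliminary check that values on $\partial_c f$ are finite, so the subtraction is legitimate) is a slightly more careful writing of the same argument, since the paper's conclusion from its $\limsup$ chain also implicitly requires the matching lower bound to pass from $\limsup$ to $\lim$.
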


\begin{proof}
  Both $f$ and $f^c$ are upper-semicontinuous. Since $(x_n, y_n)$ and $(x, y)$ are elements in $\partial_c f$,
  \begin{equation*}
    f^c(y)
    \ge
    \limsup_{n\to\infty} f^c(y_n) 
    \ge
    c(x, y) - \limsup_{n\to\infty} f(x_n)
    \ge
    c(x, y) - f(x) = f^c(y).
  \end{equation*}
  Therefore, $\limsup_{n\to\infty} f(x_n) = f(x)$ and $\limsup_{n\to\infty} f^c(y_n) = f^c(y)$ has to hold.
\end{proof}

\begin{figure}
  \centering\small
  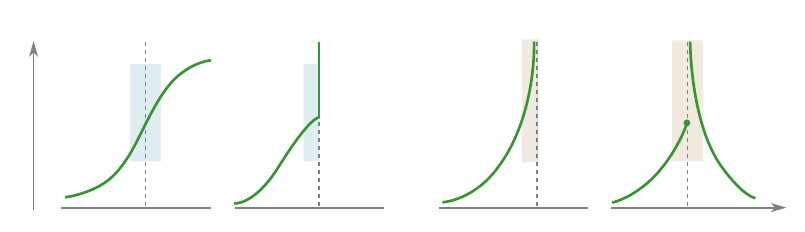
  \caption{Transport plans and induced regularity.
  The two plans on the left induce regularity at $x\in\supp(\mu)$ (dashed line), since a relatively open neighborhood $U\subset\supp(\mu)$ and a compact set $K\subset\Y$ can be found such that condition \eqref{eq:partiallycompact} is satisfied.
  The two plans on the right fail to induce regularity. Note that $x\in\pr_\X(\supp\,\pi)$ is possible even if regularity is not induced at the point $x$ (rightmost sketch).
  }
  \label{fig:partiallycompact}
\end{figure}

We next show that Kantorovich potentials can only be discontinuous at points that are \enquote{sent to infinity} by all optimal transport plans.
To put this more precisely, we introduce the notion of \emph{induced regularity} (see \cref{fig:partiallycompact}).

\begin{definition}{Induced regularity}{regularity}
  Given a relatively open set $U\subset\supp\,\mu$, we say that a transport plan $\pi\in\couple(\mu, \nu)$ \emph{induces regularity on} $U$ if there exists a compact set $K\subset\supp\,\nu$ such that
  \begin{equation}\label{eq:partiallycompact}
    \pr_\X(\supp\,\pi) \cap U
    =
    \pr_\X\big(\supp\,\pi \cap (U\times K)\big),
  \end{equation}
  where $\pr_\X$ denotes the coordinate projection onto $\X$. We also say that $\pi$ \emph{induces regularity at} $x\in\supp\,\mu$, if there is a relatively open neighborhood $U\subset\supp\,\mu$ of $x$ such that $\pi$ induces regularity on $U$.
  The set of points where $\pi$ fails to induce regularity is denoted as
  \begin{equation}\label{eq:sigma}
    \Sigma_\pi
    =
    \big\{x\in\supp\,\mu\,\big|\,\pi~\text{does not induce regularity at}~x\big\}.
  \end{equation}
\end{definition}

Similar definitions with reversed roles of $\mu$ and $\nu$ can be deployed for subsets or points in $\supp\,\nu$ as well, to which all of the following statements may be adjusted straightforwardly.

\begin{lemma}{}{regularitycompact}
  Let $X$ and $Y$ be Polish, $\mu\in\PC(\X)$, $\nu\in\PC(\Y)$, and $c\colon\XY\to\RR_+$ continuous with $\ot_c(\mu, \nu) < \infty$. Let $\pi\in\couple(\mu, \nu)$ be optimal and $f\in\CCc(\mu, \nu)$.
  If $\pi$ induces regularity on $U\subset\supp\,\mu$ with respect to the compact set $K\subset\supp\,\nu$, then $U\subset\pr_\X(\supp\,\pi)$ and 
  \begin{equation}\label{eq:regularitycompact}
    f(x) = \inf_{y\in K} c(x, y) - f^c(y)
  \end{equation}
  for all $x\in U$. In particular, $f|_{\supp\,\mu}$ is continuous on $U$.
\end{lemma}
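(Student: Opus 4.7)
The plan is to establish the three conclusions in sequence, using the induced regularity condition together with the compactness of $K$ and the closedness of $\supp\,\pi$ and $\partial_c f$. The key leverage is that whenever $x \in U$ can be lifted to a pair $(x, y_x) \in \supp\,\pi \cap (U \times K)$, the inclusion $\supp\,\pi \subset \partial_c f$ immediately gives the equality $f(x) = c(x, y_x) - f^c(y_x)$ with the partner $y_x$ constrained to the compact set $K$, which is exactly what is needed to both evaluate the infimum in \eqref{eq:regularitycompact} and extract continuity by a compactness argument.

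First I would show $U \subset \pr_\X(\supp\,\pi)$. Fix $x \in U$ and pick a shrinking sequence of relatively open neighborhoods $V_n \subset U$ of $x$ in $\supp\,\mu$ with $\diam(V_n) \to 0$. Writing $V_n = \tilde V_n \cap \supp\,\mu$ for open $\tilde V_n \subset \X$, the marginal identity $\pi(\tilde V_n \times \Y) = \mu(\tilde V_n) > 0$ combined with the openness of $\tilde V_n \times \Y$ yields some $(x_n, y_n) \in \supp\,\pi \cap (\tilde V_n \times \Y)$. The inclusion $\pr_\X(\supp\,\pi) \subset \supp\,\mu$, which follows from the fact that an open $\mu$-null neighborhood $W$ of any point outside $\supp\,\mu$ lifts to a $\pi$-null open set $W \times \Y$, places $x_n$ in $V_n \subset U$. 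The induced regularity condition \eqref{eq:partiallycompact} then allows me to replace $y_n$ by some $y_n' \in K$ with $(x_n, y_n') \in \supp\,\pi$. Compactness of $K$ produces a subsequential limit $y_n' \to y \in K$, and closedness of $\supp\,\pi$ gives $(x, y) \in \supp\,\pi$; in particular $x \in \pr_\X(\supp\,\pi)$ with a partner $y \in K$ readily available.

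For the identity \eqref{eq:regularitycompact}, the inequality $f(x) \le \inf_{y \in K} c(x, y) - f^c(y)$ is immediate from $f = (f^c)^c$, since the infimum over the smaller set $K$ can only grow. The reverse inequality follows from the partner $y_x \in K$ constructed above: $(x, y_x) \in \partial_c f$ implies $f(x) = c(x, y_x) - f^c(y_x) \ge \inf_{y \in K} c(x, y) - f^c(y)$. Note that all values here are finite, since $c$ is real-valued and $f \oplus f^c = c$ on $\partial_c f$.

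Finally, for continuity of $f|_{\supp\,\mu}$ on $U$, upper semicontinuity of $f$ is automatic from $c$-concavity. For the reverse bound I would take a sequence $x_n \to x$ in $U$ and observe that $y \mapsto c(x_n, y) - f^c(y)$, being the sum of a continuous function and the lower semicontinuous $-f^c$, attains its infimum on the compact set $K$ at some $y_n$ with $f(x_n) = c(x_n, y_n) - f^c(y_n)$ by \eqref{eq:regularitycompact}. Passing to a convergent subsequence $y_n \to y \in K$, the continuity of $c$ together with upper semicontinuity of $f^c$ yields $\liminf_{n} f(x_n) \ge c(x, y) - f^c(y) \ge f(x)$. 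The main technical care lies in step one, where the sequences $x_n$ must simultaneously stay in $U$, in $\supp\,\mu$, and in $\pr_\X(\supp\,\pi)$ in order for the induced regularity condition to be applicable; once this bookkeeping is in place, the remaining steps reduce to routine compactness arguments.
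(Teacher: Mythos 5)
Your proof is correct and follows essentially the same route as the paper: both arguments rest on the density of $\pr_\X(\supp\,\pi)$ in $\supp\,\mu$ to approximate each $x\in U$ by points with transport partners confined to $K$, then use compactness of $K$ and closedness of $\supp\,\pi\subset\partial_c f$ to produce a partner for $x$ itself, from which the formula and the two-sided semicontinuity bounds follow. The only cosmetic differences are that you unfold the paper's closed-map/density argument into an explicit shrinking-neighborhood sequence and re-derive the conclusion of \cref{lem:regularity} by hand rather than citing it.
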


\begin{proof}
  Let $\pi$ induce regularity on the relatively open set $U\subset\supp\,\mu$ with respect to the compact set $K\subset\supp\,\nu$ as defined in \eqref{eq:partiallycompact}. 
  %We can assume $U$ to be the closure of an open
  %neighborhood $U^\circ$ of $x$.
  Restricted to the domain $\X\times K$, the projection $\pr_\X$ is a closed map.
  Condition \eqref{eq:partiallycompact} thus establishes that $A = \pr_\X(\supp\,\pi)\cap U = \pr_\X\big(\supp\,\pi \cap (X\times K)\big) \cap U$ is relatively closed in $U$.
  Since $\pr_\X(\supp\,\pi)$ is dense in $\supp\,\mu$, any point in $U$ is a limit point of $A$.
  Therefore, $U = A \subset \pr_\X(\supp\,\pi)$.
  Furthermore, each $x\in U$ admits a partner $y\in K$ such that $(x, y)\in\supp\,\pi\subset\partial_c f$.
  This establishes \eqref{eq:regularitycompact}, since
  \begin{equation*}
    f(x)
    =
    c(x, y) - f^c(y)
    =
    \inf_{y'\in K} c(x, y') - f^c(y').
  \end{equation*}
  To show continuity of $f|_{\supp\,\mu}$ on $U$, it is enough to show that each sequence $(x_n)_{n\in\NN}\subset\supp\,\mu$ converging to $x\in U$ has a subsequence attaining the limit $f(x)$.
  Since $U$ is (relatively) open, we can assume $(x_n)_{n}\subset U$.
  Thus, each $x_n$ admits $y_n\in K$ such that $(x_n, y_n)\in \supp\,\pi$.
  After taking a suitable subsequence, we may assume that $y_n \to y\in K$ due to the compactness of $K$.
  Thus, $(x_n, y_n) \to (x, y)$ as $n\to\infty$.
  Since $\supp\,\pi \subset \partial_c f$ is closed in $\XY$, it contains $(x,y)$, and so \cref{lem:regularity} can be applied to establish $\lim_{n\to\infty}f(x_n) = f(x)$.
\end{proof}

\begin{remark}{Projections and measurability}{measurability}
  We commonly formulate our results in terms of the projected sets $\pr_\X(\supp\,\pi)\subset\supp\,\mu$ and $\pr_\Y(\supp\,\pi)\subset\supp\,\nu$, where $\pi\in\couple(\mu, \nu)$ denotes an (arbitrary) optimal transport plan.
  Note that these inclusions are possibly strict, in which case \cref{lem:regularitycompact} establishes the relation
  \begin{equation*}
    \supp\,\mu \setminus \pr_\X(\supp\,\pi)
    \subset
    \Sigma_\pi,
  \end{equation*}
  and vice versa for $\nu$.
  Projected sets of the form $\pr_\X(\supp\,\pi)$ or $\pr_\Y(\supp\,\pi)$ are not Borel measurable in general, but they are universally measurable and thus contain Borel subsets of full $\mu$- or $\nu$-measure (see \cref{lem:measurability} in \cref{app:proofs} for references).
  We prefer the explicit formulation via these sets (instead of writing \enquote{almost surely}) to emphasize that the domain of a given property does not depend on the choice of a specific Kantorovich potential.
\end{remark}

%\todo{adapt to new notation for convergence to infinity!}
Some consequences of \cref{lem:regularitycompact} deserve to be highlighted.
First, the Kantorovich potentials $\CCc(\mu, \nu)$ are always continuous if the space $Y$ is compact.
If $Y$ is not compact, the intuition fostered by \cref{lem:regularitycompact} is that discontinuities can only occur at points from whose immediate vicinity some mass is sent towards infinity, in the sense that this mass leaves any compactum in $\Y$.
Relation \eqref{eq:regularitycompact} is particularly useful for transferring properties of the cost function to Kantorovich potentials, like a modulus of continuity of $c(\cdot, y)$ that holds only \emph{locally} in $y$.
This observation becomes crucial for \cref{thm:uniqueconnected} in \cref{sec:uniqueness}.

Examples where induced regularity fails at some points can easily be found, and include settings where $\mu$ is compactly supported but $\nu$ is not.
At the offending points, $f$ can turn out to be both continuous or discontinuous, depending on the regularity of the cost function as well as the specific behavior of $\mu$ and $\nu$ (this can already be observed for $\X = \Y = \RR$). 
The following example anticipates that points of discontinuity are often restricted to the boundary of the support, a phenomenon that we more closely investigate in \cref{sec:interior}.

\begin{example}{Continuity in geodesic spaces}{continuitygeodesic}
  Let $\X = \Y$ for a locally compact complete geodesic space $(\X, d)$ and consider a cost function of the form $c(x, y) = h(d(x, y))$ with convex and locally Lipschitz $h\colon\RR_+\to\RR_+$.
  Then every Kantorovich potential $f\in\CCc(\mu, \nu)$ is continuous on the interior of the support of $\mu$ and discontinuities at the boundary are only possible if $h'(a)\to\infty$ as $a\to\infty$.
  Proofs and more details are provided in \cref{sec:interior}.
\end{example}

We stress that regularity properties of Kantorovich potentials that go beyond mere continuity, like their degree of differentiability, are extensively studied in the literature on the optimal transport map (see \cite[Chapter~10]{villani2008optimal}, for a detailed exposition).
To mention a common argument in this context, one possible way to enforce twofold differentiability of each $f\in\CCc$ (Lebesgue-almost everywhere) is to work with semiconcave cost functions.

\begin{example}{Continuity under semiconcavity}{continuityconcave}
  Let $X = \RR^d$ for some $d\in\NN$ with the Euclidean norm $\|\cdot\|$ and assume that the function class $\big\{c(\cdot, y)\,\big|\,y\in\Y\big\}$ has uniformly bounded second derivatives.
  Then there exists $\lambda > 0$ such that $x\mapsto c(x, y) - \lambda \|x\|^2$ is concave for each $y\in\Y$, which implies concavity of $x\mapsto f(x) - \lambda \|x\|^2$ for $f\in\CCc(\mu, \nu)$ as well.
  In particular, $f$ is continuous and Lebesgue-almost everywhere twice differentiable in the domain $\Omega = \interior(\{x\,|\,f(x) > -\infty\})\subset\RR^d$, which is convex and contains the interior of $\supp\,\mu$.
  On the boundary of $\Omega$, the potential may assume finite values or $-\infty$.
\end{example}

\subsection{Restrictions}
A valuable trait of optimal transport theory is that both primal and dual solutions behave consistently when the base spaces $\X$ and $\Y$ are restricted to subspaces.
General results in this direction can be found in \cite[Theorem~4.6 and Theorem~5.19]{villani2008optimal}.
In the following, we stress some selected statements that complement our assertions on continuity and uniqueness of Kantorovich potentials.
We begin with the technical observation that restrictions of Kantorovich potentials of the form $f|_{\supp\,\mu}$, as they appear in \cref{lem:regularitycompact}, can (almost) be understood as Kantorovich potentials of a suitably restricted problem.
The proof is delegated to \cref{app:proofs}.

\begin{lemma}{Restriction to sets of full mass}{fullrestriction}
  Let $\X$ and $\Y$ be Polish and $c\colon\XY\to\RR_+$ continuous.
  Suppose $\mu\in\PC(\X)$ and $\nu\in\PC(\Y)$ such that $\OT_c(\mu, \nu) < \infty$.
  Let $\pi\in\couple(\mu, \nu)$ be an optimal plan and let $\cres$ denote the restriction of $c$ to the set $\Xres\times\Yres$, where $\Xres\subset \X$ and $\Yres\subset\Y$ are Borel and Polish subspaces with $\mu(\Xres) = \nu(\Yres) = 1$.
  Let $\tilde\Gamma = \supp\,\pi\cap(\XYres)$.
  \begin{description}[topsep=1.25ex, parsep=0.0ex, font={\normalfont\color{green!30!black}}]
    \item[(Restrict)~] Every $f\in \CCc(\mu, \nu)$ admits $\fres\in
      \CC_\cres(\mu, \nu)$ that agrees with $f$ on $\pr_\X(\tilde\osupp)$.
    \item[(Extend)~~] Every $\fres \in \CC_\cres(\mu, \nu)$ admits $f\in
      \CCc(\mu, \nu)$ that agrees with $\fres$ on $\pr_\X(\tilde\osupp)$.
  \end{description}
  In both cases, the conjugates $f^c$ and $\fres^\cres$ agree on $\pr_\Y(\tilde\osupp)$.
\end{lemma}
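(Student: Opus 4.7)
My plan begins from the remark that, since $\mu(\Xres) = \nu(\Yres) = 1$, every coupling in $\couple(\mu,\nu)$ assigns mass one to $\XYres$. Consequently, couplings for the original and the restricted problem coincide (using that $\Xres, \Yres$ are Polish in the subspace topology), which yields $\OT_c(\mu,\nu) = \OT_\cres(\mu,\nu)$ and makes the given $\pi$ optimal for the restricted problem as well, with $\tilde\Gamma$ carrying full $\pi$-mass. Given $f \in \CCc(\mu,\nu)$, I would define $\fres := (f^c|_{\Yres})^\cres$ on $\Xres$, i.e., $\fres(x) = \inf_{y \in \Yres} c(x,y) - f^c(y)$. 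The agreement $\fres = f$ on $\pr_\X(\tilde\Gamma)$ then emerges from a two-sided squeeze: $\fres \geq f|_{\Xres}$ is automatic because the defining infimum of $f(x) = \inf_{y \in \Y} c(x,y) - f^c(y)$ ranges over a larger set, while the converse follows by pairing $x \in \pr_\X(\tilde\Gamma)$ with any $y \in \Yres$ satisfying $(x,y) \in \partial_c f$. A symmetric argument — combining the biconjugation inequality $\fres^\cres \geq f^c|_{\Yres}$ (from $\fres$ being a $\cres$-transform of $f^c|_{\Yres}$) with such a subdifferential pairing — yields $\fres^\cres = f^c$ on $\pr_\Y(\tilde\Gamma)$.

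\textbf{Extend.} To certify $\fres \in \CC_\cres(\mu,\nu)$, I use $\pi$ itself as the witness optimal plan: since $\pi$ is concentrated on $\tilde\Gamma$, the identities just proved give $\fres \oplus \fres^\cres = f \oplus f^c = c = \cres$ there, so $\pi(\fres \oplus \fres^\cres) = \pi\cres = \OT_\cres(\mu,\nu)$. The reverse direction is analogous: given $\fres \in \CC_\cres(\mu,\nu)$, I would extend $\fres^\cres$ to $g \colon \Y \to \RRext$ by declaring $g \equiv -\infty$ on $\Y\setminus\Yres$, and define $f := g^c$. For $x \in \Xres$ the $+\infty$ contributions outside $\Yres$ collapse the defining infimum of $f(x)$ to the $\cres$-biconjugate of $\fres$, yielding $f|_{\Xres} = \fres$. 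For $y \in \Yres$, restricting the defining infimum of $f^c(y)$ to $\Xres$ gives $f^c \leq \fres^\cres$, and the general biconjugation $g^{cc} \geq g$ gives the reverse. Because $\pi$ is also optimal for the restricted problem, it concentrates on $\partial_\cres \fres$, and the earlier identifications translate $f \oplus f^c = \fres \oplus \fres^\cres = \cres = c$ on $\tilde\Gamma$, which verifies the duality identity \eqref{eq:existence} and hence $f \in \CCc(\mu,\nu)$.

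\textbf{Main obstacle.} The computations are routine; the genuine care lies in the bookkeeping of infima over $\Xres$ vs.\ $\X$ and $\Yres$ vs.\ $\Y$, as each identity on $\pr_\X(\tilde\Gamma)$ or $\pr_\Y(\tilde\Gamma)$ arises as a two-sided squeeze combining a domain-inclusion inequality with a $c$-subdifferential pairing. A subsidiary point I would verify explicitly is that neither $\fres$ nor $f$ is identically $-\infty$ (so that it qualifies as $c$-concave in the paper's sense), which follows immediately from the agreements just derived together with the finiteness of the original potentials on the non-empty set $\pr_\X(\tilde\Gamma)$.
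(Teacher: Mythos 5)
Your proposal is correct and follows essentially the same route as the paper's proof: the same definition $\fres = (f^c|_{\Yres})^{\cres}$ for the restriction, the same extension by setting the conjugate to $-\infty$ off $\Yres$ and $c$-transforming, the same two-sided squeezes (domain-inclusion inequality against a $\partial_c f$-pairing on $\tilde\Gamma$), and the same optimality check via $\pi(\tilde\Gamma)=1$. The only cosmetic difference is that you invoke the biconjugation inequality where the paper writes out the nested infima explicitly; these are the same computation.
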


\vspace{-1ex}

\begin{remark}{Ambiguity of extensions}{ambiguous}
  Depending on the setting, there can be distinct ways of extending Kantorovich potentials from the support to the whole space.
  For example, if $\X = \Y$ are equal and $c$ is a metric, then the $c$-concave functions are exactly the $1$-Lipschitz functions with respect to $c$, and it is easy to see that $f^c = -f$ holds for any $f\in\CCc$.
  In this situation, ambiguous extensions are common if $\supp\,\mu \cup\supp\,\nu$ does not cover the whole space $\X$.
\end{remark}

We next address the behavior of Kantorovich potentials when transportation is restricted to a part of $\X$ that does not necessarily occupy full $\mu$-mass.
\begin{definition}{Restricted problem}{restricted}
  Let $\pi\in\couple(\mu, \nu)$ be an optimal plan and suppose that $\mu(\Xres) > 0$ for some closed subset $\Xres\subset\X$.
  We denote the optimal transport problem between the probability measures $\mu_\Xres = \mu|_\Xres / \mu(\Xres)$ and $\nu_\Xres = \pi(\Xres \times \cdot)/\mu(\Xres)$ under the cost function $c_\Xres = c|_{\Xres\times\Y}$ as the \emph{$\Xres$-restricted problem (with respect to $\pi$)}.
\end{definition}
As an application of \cite[Theorem~5.19]{villani2008optimal}, we note that the restriction of Kantorovich potentials in the original problem yields Kantorovich potentials in the restricted problem.

\begin{lemma}{Restriction to sets of partial mass}{restriction}
  Let $\X$ and $\Y$ be Polish and $c\colon\XY\to\RR_+$ continuous.
  Suppose $\mu\in\PC(\X)$ and $\nu\in\PC(\Y)$ such that $\ot_c(\mu, \nu) < \infty$ and let $\Xres\subset\X$ be closed with $\mu(\Xres) > 0$.
  Let an optimal plan $\pi$ be given.
  Then any $f\in \CCc(\mu, \nu)$ admits $\fres \in \CC_{c_\Xres}(\mu_\Xres, \nu_\Xres)$ that agrees with $f$ on $\pr_\X(\supp\,\pi)\cap \Xres$.
\end{lemma}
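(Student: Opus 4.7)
The plan is to take the natural restriction $\fres = f|_{\Xres}$ and verify that it already qualifies as a Kantorovich potential for the restricted problem. Writing $f = g^c$ for some $g\colon\Y\to\RRext$ (which exists by $c$-concavity) and setting $\fres := g^{c_\Xres}$, the formula
\[
  g^{c_\Xres}(x) \;=\; \inf_{y\in\Y} c(x, y) - g(y) \;=\; g^c(x) \;=\; f(x)
\]
for $x\in\Xres$ shows $\fres = f|_\Xres$. In particular, $\fres\in\CC_{c_\Xres}$ by construction, and the agreement property on $\pr_\X(\supp\,\pi)\cap\Xres$ is automatic.

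Next I would identify $\fres^{c_\Xres}$ on the relevant projection. The inclusion $\Xres\subset\X$ immediately gives $\fres^{c_\Xres}(y) \geq f^c(y)$ for every $y\in\Y$. On the other hand, for any $y\in\pr_\Y\bigl(\supp\,\pi\cap(\Xres\times\Y)\bigr)$ there is a partner $x\in\Xres$ with $(x,y)\in\supp\,\pi\subset\partial_c f$; this provides the reverse inequality $\fres^{c_\Xres}(y)\leq c(x,y)-f(x)=f^c(y)$. Hence $\fres^{c_\Xres}=f^c$ on this set, and $\fres(x)+\fres^{c_\Xres}(y) = f(x)+f^c(y) = c(x,y)$ for every $(x,y)\in\supp\,\pi\cap(\Xres\times\Y)$.

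To promote $\fres$ to a genuine Kantorovich potential, I still need an optimal plan for the restricted problem realizing the duality equality. The natural candidate is $\pi_\Xres := \pi|_{\Xres\times\Y}/\mu(\Xres)$, which clearly belongs to $\couple(\mu_\Xres,\nu_\Xres)$ and satisfies $\pi_\Xres c_\Xres \leq \pi c/\mu(\Xres) < \infty$. Its optimality for $(\mu_\Xres,\nu_\Xres,c_\Xres)$ is exactly what Theorem~5.19 of \cite{villani2008optimal} provides; alternatively, it follows from the fact that $\supp\,\pi_\Xres\subset\supp\,\pi$ inherits $c$-cyclical monotonicity (hence $c_\Xres$-cyclical monotonicity), combined with the finite-cost condition. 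Integrating the pointwise identity from the previous paragraph over $\supp\,\pi_\Xres$ then yields $\pi_\Xres c_\Xres = \pi_\Xres(\fres\oplus\fres^{c_\Xres})$, which, together with optimality of $\pi_\Xres$, establishes $\fres\in\CC_{c_\Xres}(\mu_\Xres,\nu_\Xres)$.

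The only non-routine step is the optimality of $\pi_\Xres$, which I would delegate to the cited Villani theorem rather than re-derive. Everything else is bookkeeping around the two inequalities for $\fres^{c_\Xres}$ and the observation that $\supp\,\pi\cap(\Xres\times\Y)$ carries full $\pi_\Xres$-mass, so the duality equality extends from a pointwise statement on the support to an integral statement against $\pi_\Xres$.
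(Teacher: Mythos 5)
Your proof is correct and takes essentially the same route as the paper, which states this lemma as a direct application of \cite[Theorem~5.19]{villani2008optimal} without writing out the details. Your argument simply makes explicit the two ingredients behind that citation — the identification $\fres = g^{c_\Xres} = f|_\Xres$ together with the pointwise duality $\fres \oplus \fres^{c_\Xres} = c$ on $\supp\,\pi\cap(\Xres\times\Y)$, and the optimality of the normalized restricted plan $\pi_\Xres$ — so nothing is missing.
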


As a consequence of \cref{lem:restriction}, it is always possible to decompose Kantorovich potentials defined on disconnected spaces in a natural way.
Indeed, if $\supp\,\mu = \bigcup_{i\in I} \X_i$ is a countable partitioning into connected components (which are always closed) with $\mu(\X_i) > 0$ for each $i\in I$, then any $f\in\CCc(\mu,\nu)$ admits restricted potentials $f_i\in\CC_{c_{\X_i}}(\mu_{\X_i}, \nu_{\X_i})$ such that
\begin{equation}\label{eq:potentialdecomposition}
  f = \sum_{i\in I} \ind_{\X_i}\cdot f_i
  \qquad
  \text{on $\pr_\X(\supp\,\pi)$}
\end{equation}
for any optimal $\pi\in\couple(\mu, \nu)$.
This simple but crucial observation lies at the heart of the uniqueness result for probability measures with disconnected support discussed in the next section.

\section{Uniqueness}
\label{sec:uniqueness}

In a strict sense, Kantorovich potentials are never unique.
Indeed, it is easy to see that $f\in\CCc(\mu, \nu)$ implies $f + a\in\CCc(\mu, \nu)$ for any $a\in\RR$.
Therefore, statements about uniqueness are generally only reasonable up to constant shifts.
Besides this ambiguity, it is often too restrictive to require uniqueness to hold outside of the supports of the involved measures (see \cref{rem:ambiguous} on ambiguous extensions).
We will therefore focus on the notion of almost surely unique Kantorovich potentials up to constant shifts.

\begin{definition}{Uniqueness}{}
  We refer to \emph{unique Kantorovich potentials} if $f_1 - f_2$ is $\mu$-almost surely constant for all $f_1, f_2\in\CCc(\mu, \nu)$. 
\end{definition}

% by which we mean that $f_1 - f_2$ is
% $\mu$-almost surely constant for all $f_1, f_2\in\CCc(\mu, \nu)$.
Due to the regularizing nature of the $c$-transform, this kind of uniqueness of $\CCc(\mu, \nu)$ is actually equivalent to $\nu$-almost sure uniqueness of $\CCcc(\mu, \nu)$ up to constants.

\begin{lemma}{}{uniqueness}
  Let $X$ and $Y$ be Polish, $\mu\in\PC(\X)$, $\nu\in\PC(\Y)$, and $c\colon\XY\to\RR_+$ continuous such that $\OT_c(\mu, \nu) < \infty$.
  For any optimal transport plan $\pi$ and any $f_1, f_2\in\CCc(\mu, \nu)$
  \begin{enumerate}[topsep=1ex, parsep=0.5ex, font={\normalfont\color{green!30!black}},label=\theenumi)]
    \item $f_1 = f_2$ $\mu$-almost surely if and only if $f_1 = f_2$ on $\pr_\X(\supp\,\pi)$,
    \item $f_1^c = f_2^c$ $\nu$-almost surely if and only if $f_1^c = f_2^c$ on $\pr_\Y(\supp\,\pi)$,
    \item $f_1 = f_2$ on $\pr_\X(\supp\,\pi)$ if and only if $f_1^c = f_2^c$ on $\pr_\Y(\supp\,\pi)$.
  \end{enumerate}
\end{lemma}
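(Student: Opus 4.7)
The plan is to treat the three equivalences in the order they are stated, exploiting throughout that $\pr_\X(\supp\,\pi)$ has full $\mu$-measure (and $\pr_\Y(\supp\,\pi)$ has full $\nu$-measure) by \cref{rem:measurability}, and that $\supp\,\pi \subset \partial_c f_i$ for $i=1,2$ because both $f_i$ are Kantorovich potentials and $\pi$ is an optimal plan (using that $\partial_c f_i$ is closed since $c$ is continuous and $f_i$ is upper-semicontinuous). The easy implication in both (1) and (2) is then immediate: if $f_1 = f_2$ on $\pr_\X(\supp\,\pi)$, then $f_1 = f_2$ on a universally measurable set of full $\mu$-mass, hence $\mu$-almost surely, and analogously for (2).

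For the reverse direction of (1), suppose $f_1 = f_2$ on a Borel set $A\subset\X$ with $\mu(A) = 1$ (we can take $A = \{f_1 = f_2\}$, which is Borel since both $f_i$ are upper-semicontinuous). Given any $x\in\pr_\X(\supp\,\pi)$, fix $y$ with $(x,y)\in\supp\,\pi$. For every open neighborhood $U\times V$ of $(x,y)$ we have $\pi(U\times V) > 0$ by definition of the support, whereas the marginal identity yields
\begin{equation*}
  \pi\bigl((U\setminus A)\times V\bigr) \le \pi\bigl((U\setminus A)\times\Y\bigr) = \mu(U\setminus A) = 0,
\end{equation*}
so that $\pi\bigl((U\cap A)\times V\bigr) > 0$. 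Choosing a shrinking neighborhood basis produces a sequence $(x_n, y_n)\in\supp\,\pi$ with $x_n\in A$ and $(x_n, y_n)\to(x,y)$. Since $\supp\,\pi\subset\partial_c f_1\cap\partial_c f_2$ is closed and contains the limit point, \cref{lem:regularity} applies to both potentials and gives $f_i(x_n)\to f_i(x)$ for $i=1,2$. Combined with $f_1(x_n) = f_2(x_n)$ for all $n$, this forces $f_1(x) = f_2(x)$. The argument for (2) is entirely symmetric, swapping the roles of $\X$ and $\Y$ and using $f_i^c$ in place of $f_i$.

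Statement (3) is then a direct consequence of the $c$-subdifferential identity. If $f_1 = f_2$ on $\pr_\X(\supp\,\pi)$, fix any $y\in\pr_\Y(\supp\,\pi)$ and choose $x$ with $(x,y)\in\supp\,\pi\subset\partial_c f_1\cap\partial_c f_2$; then $c(x,y) = f_i(x) + f_i^c(y)$ for $i=1,2$, and since the values $f_1(x) = f_2(x)$ agree we obtain $f_1^c(y) = f_2^c(y)$. The reverse implication is identical with $x$ and $y$ interchanged.

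The only real obstacle is the passage from $\mu$-almost sure equality to equality on all of $\pr_\X(\supp\,\pi)$ in the converse half of (1); the rest amounts to rearranging the $c$-subdifferential equation. That obstacle is overcome by combining a simple marginal computation (to intersect a neighborhood of $(x,y)$ with $A\times\Y$ without losing $\pi$-mass) with the sequential continuity statement \cref{lem:regularity}, which is exactly tailored to this situation.
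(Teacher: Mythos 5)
Your proposal is correct and follows essentially the same route as the paper's proof: reduce the nontrivial direction of (1) to finding a sequence in $\supp\,\pi\cap(A\times\Y)$ converging to a given point of $\supp\,\pi$, apply \cref{lem:regularity}, use \cref{lem:measurability} for the converse, and read off (3) from the identity $c(x,y)=f_i(x)+f_i^c(y)$ on $\supp\,\pi$. The only difference is that you spell out, via the marginal computation, the density of $\supp\,\pi\cap(A\times\Y)$ in $\supp\,\pi$, which the paper asserts without proof.
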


\vspace{-1ex}

\begin{proof}
  We begin with the first assertion and assume that $f_1 = f_2$ holds on a Borel set $A\subset \X$ with $\mu(A) = 1$.
  The set $B = \supp\,\pi \cap (A\times \Y)$ is dense in $\supp\,\pi$, so that there is a convergent sequence in $B$ to any $(x, y)\in\supp\,\pi$.
  \cref{lem:regularity} then asserts  $f_1(x) = f_2(x)$ for all $x\in\pr_\X(\supp\,\pi)$.
  Conversely, \cref{lem:measurability} in \cref{app:proofs} shows that $\pr_\X(\supp\,\pi)$ contains a Borel set of full $\mu$-measure.
  Assertion~2 follows similarly.
  To show assertion~3, it is sufficient to observe $c(x, y) = f_1(x) + f_1^c(y) = f_2(x) + f_2^c(y)$ and thus $f_1(x) - f_2(x) = f_2^c(y) - f_1^c(y)$ for any $(x, y)\in\supp\,\pi$.
\end{proof}

\subsection{Disconnected support}
\label{sec:uniquedisconnected}

Our contributions regarding the uniqueness of Kantorovich potentials for measures with disconnected support are inspired by well-known results from the theory of finite linear programming.
In a nutshell, we show that unique Kantorovich potentials on the connected components of the support are sufficient to imply the uniqueness on the whole support, as long as we have continuous $c$-transformed potentials and so-called non-degenerate optimal plans.

\begin{definition}{Degenerate plans}{degenerate}
Consider countable disjoint decompositions
\begin{equation}\label{eq:components}
  \supp\,\mu = \bigcup_{i\in I} X_i
  \qquad\text{and}\qquad
  \supp\,\nu = \bigcup_{j\in J} Y_j
\end{equation}
of the supports of $\mu$ and $\nu$ into their connected components.
Then a transport plan $\pi\in\couple(\mu, \nu)$ is called \emph{degenerate} if there exist subsets $I'\subset I$ and $J' \subset J$ such that
\begin{equation}\label{eq:degenerate}
  0
  <
  \sum_{i\in I'}\mu(\X_i)
  =
  \sum_{i\in I'}\sum_{j\in J'} \pi(\X_i\times \Y_j)
  =
  \sum_{j\in J'}\nu(\Y_j)
  <
  1.
\end{equation}
\end{definition}

% Note that the index sets $I$ and $J$ in this definition are necessarily countable, otherwise $\mu$ or $\nu$ would not have finite mass.
Equation~\eqref{eq:degenerate} suggests the following sufficient non-degeneracy criterion, which can easily be checked on the basis of $\mu$ and $\nu$ alone.

\begin{lemma}{}{degeneracy}
  If all nonempty proper $I' \subset I$ and $J' \subset J$ satisfy $\sum_{i\in I'}\mu(\X_i) \neq \sum_{j\in J'}\nu(\Y_j)$, then no transport plan $\pi\in\couple(\mu, \nu)$ is degenerate.
\end{lemma}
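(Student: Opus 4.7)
The plan is to argue by contradiction: suppose some $\pi\in\couple(\mu,\nu)$ is degenerate, which by \eqref{eq:degenerate} provides subsets $I'\subset I$ and $J'\subset J$ with
\[
  0 \;<\; \sum_{i\in I'} \mu(\X_i) \;=\; \sum_{i\in I'}\sum_{j\in J'} \pi(\X_i\times\Y_j) \;=\; \sum_{j\in J'} \nu(\Y_j) \;<\; 1.
\]
The goal is then to show that $I'$ and $J'$ must be nonempty proper subsets of $I$ and $J$ respectively, at which point the middle equality directly contradicts the hypothesis.

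First, I would show that $I'$ and $J'$ are nonempty: since $\sum_{i\in I'}\mu(\X_i) > 0$ (and each term is non-negative), at least one index lies in $I'$, giving $I'\neq\emptyset$; the analogous argument with $\nu$ gives $J'\neq\emptyset$. Next, I would show that $I'$ and $J'$ are proper. Because the $\X_i$ form a partition of $\supp\,\mu$ and $\mu(\supp\,\mu)=1$, the total mass satisfies $\sum_{i\in I}\mu(\X_i) = 1$. Since $\sum_{i\in I'}\mu(\X_i) < 1$, there must exist some $i_0\in I\setminus I'$, so $I'\subsetneq I$; the same reasoning applied to $\nu$ gives $J'\subsetneq J$.

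With $I'$ and $J'$ both nonempty proper subsets, the hypothesis of the lemma forces $\sum_{i\in I'}\mu(\X_i) \neq \sum_{j\in J'}\nu(\Y_j)$, contradicting the equality established by the degeneracy chain. This completes the argument.

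There is really no serious obstacle here—the lemma is essentially a direct unpacking of the definition of degeneracy together with the observation that the marginal masses on the components sum to one. The only small point worth being explicit about is the use of $\mu(\X_i)>0$ and $\nu(\Y_j)>0$ (guaranteed since the $\X_i$ and $\Y_j$ were introduced as connected components of the \emph{supports}) to pass from positivity of the sums to nonemptiness of $I'$ and $J'$.
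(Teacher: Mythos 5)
Your proof is correct and is exactly the direct unpacking of definition \eqref{eq:degenerate} that the paper intends (the paper states this lemma without proof, treating it as immediate): the outer equality in the degeneracy chain contradicts the hypothesis once $I'$ and $J'$ are seen to be nonempty (an empty sum is zero, not positive) and proper (the full sums equal $1$, not less). One minor quibble: your closing remark that $\mu(\X_i)>0$ for every component is ``guaranteed'' is not actually true for countably infinite $I$ (components need not be open in $\supp\,\mu$ and can carry zero mass), but it is also not needed, since nonemptiness of $I'$ already follows from the positivity of the sum alone.
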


Under suitable conditions, non-degenerate optimal transport plans make it possible to uniquely link together Kantorovich potentials of the $X_i$-restricted transport problems (recall \cref{def:restricted}) to assert uniqueness of Kantorovich potentials on the full support.
For the following result, note that $\mu(\X_i) > 0$ for all $i\in I$ if $I$ is finite, since each $\X_i\subset\supp\,\mu$ is relatively open in this case.

\begin{theorem}{Uniqueness, disconnected support}{uniquedisconnected}
  Let $\X$ and $\Y$ be Polish, $\mu \in\PC(\X)$, $\nu \in \PC(\Y)$, and $c\colon \XY\to\RR_+$ continuous with $\OT_c(\mu, \nu) < \infty$.
  Assume decomposition~\eqref{eq:components} for $I$ finite and $J$ (at most) countable and assume that for all $f^c\in\CCcc(\mu, \nu)$ either
  \begin{enumerate}[topsep=1ex, parsep=0.5ex, font={\normalfont\color{orange!45!black}}, label=\theenumi)]
    \item $f^c|_{\supp\,\nu}$ is continuous, or
    \item $f^c|_{\Y_j}$ is continuous and $\supp\,\nu|_{Y_j}$ is connected for
      all $j\in J$ with $\nu(Y_j) > 0$.
  \end{enumerate}
  If there exists a non-degenerate optimal transport plan $\pi\in\couple(\mu, \nu)$ with respect to which the $\X_i$-restricted Kantorovich potentials $\CC_{c_{\X_i}}\!(\mu_{\X_i}, \nu_{\X_i})$ are unique for all $i\in I$, the Kantorovich potentials $\CCc(\mu, \nu)$ are also unique.
\end{theorem}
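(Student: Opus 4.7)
The plan is to analyze $f_1 - f_2$ for arbitrary $f_1, f_2 \in \CCc(\mu, \nu)$ and show it is $\mu$-almost surely constant. By \cref{lem:uniqueness}, it suffices to establish constancy on $\pr_\X(\supp\,\pi)$. First I would exploit the partition of $\supp\,\mu$ into the connected components $\X_i$: applying \cref{lem:restriction} to each $\X_i$ yields restricted potentials $\fres^{(i)}_k \in \CC_{c_{\X_i}}(\mu_{\X_i}, \nu_{\X_i})$ that coincide with $f_k$ on $\pr_\X(\supp\,\pi) \cap \X_i$. The uniqueness hypothesis on the restricted problems, combined again with \cref{lem:uniqueness}, then produces constants $a_i$ such that $f_1 - f_2 = a_i$ holds throughout $\pr_\X(\supp\,\pi) \cap \X_i$. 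Using the dual equality $c = f_k \oplus f_k^c$ on $\supp\,\pi$, this forces $f_1^c - f_2^c = -a_i$ on $\pr_\Y(\supp\,\pi \cap (\X_i \times \Y))$, so that $f_1^c - f_2^c$ takes values in the \emph{finite} set $\{-a_i : i \in I\}$ throughout $\pr_\Y(\supp\,\pi)$.

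Next I would promote this pointwise description to a component-wise constant via the continuity assumption. Since $\pr_\Y(\supp\,\pi)$ has full $\nu$-measure, the subset $\pr_\Y(\supp\,\pi) \cap \Y_j$ is dense in $\supp\,\nu|_{\Y_j}$. Continuity of $f_1^c - f_2^c$ on the pertinent set (either $\supp\,\nu$ in case~1, or $\Y_j$ in case~2) together with closedness of the finite range $\{-a_i : i \in I\}$ shows that $f_1^c - f_2^c$ takes values \emph{only} in this finite set on $\supp\,\nu$ (case~1) or on $\supp\,\nu|_{\Y_j}$ (case~2). Connectedness of $\Y_j$ (case~1, as a connected component of $\supp\,\nu$) or of $\supp\,\nu|_{\Y_j}$ (case~2) then implies that $f_1^c - f_2^c$ is constant -- call it $d_j$ -- on (at least) $\pr_\Y(\supp\,\pi) \cap \Y_j$ for every $j$ with $\nu(\Y_j) > 0$.

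The concluding and most conceptual step is to tie the local constants $a_i$ and $d_j$ together using non-degeneracy. I would form the bipartite graph $G$ on the vertex set $I \cup \{j \in J : \nu(\Y_j) > 0\}$ with an edge $(i,j)$ whenever $\pi(\X_i \times \Y_j) > 0$; any $(x,y) \in \supp\,\pi \cap (\X_i \times \Y_j)$ then witnesses $d_j = -a_i$. Suppose for contradiction that $G$ is disconnected, and pick a connected component covering $I' \subsetneq I$ and $J' \subsetneq J$. Since $\mu(\X_i) > 0$ for each $i \in I$ (as $I$ is finite and the $\X_i$ are relatively open in $\supp\,\mu$), while $\nu(\Y_j) > 0$ for each $j$ in the vertex set, the mass balance
\begin{equation*}
  \sum_{i \in I'} \mu(\X_i)
  \;=\; \sum_{i \in I',\, j \in J'} \pi(\X_i \times \Y_j)
  \;=\; \sum_{j \in J'} \nu(\Y_j) \;\in\; (0, 1)
\end{equation*}
contradicts non-degeneracy \eqref{eq:degenerate}. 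Hence $G$ is connected, and chaining edges $i \sim j \sim i'$ forces $a_i = a_{i'}$ throughout $I$. All $a_i$ thus coincide with a single value $a$, giving $f_1 - f_2 = a$ on $\pr_\X(\supp\,\pi)$ as required.

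The main obstacle I expect lies in case~2, where continuity is only guaranteed on each $\Y_j$ rather than globally on $\supp\,\nu$, so one must carefully verify the density of $\pr_\Y(\supp\,\pi) \cap \Y_j$ in $\supp\,\nu|_{\Y_j}$ before invoking the finite-values-on-connected-set argument. The bipartite graph step itself is conceptually clean, but requires the bookkeeping outlined above to ensure that no ostensibly isolated vertex can arise from a zero-mass component.
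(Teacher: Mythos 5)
Your proof is correct and arrives at the theorem along the same skeleton (offset vectors from the restricted problems, then non-degeneracy to tie them together), but the mechanism by which you link the offsets $a_i$ across a component $\Y_j$ is genuinely different from the paper's. The paper fixes a partition $I = I_1 \cup I_2$, finds a shared contact index $j$, and then produces a single \emph{contact point} $y \in \overline{\pr_\Y(\Gamma_{i_1j})} \cap \overline{\pr_\Y(\Gamma_{i_2j})}$ by showing that the finitely many closed sets $\overline{\pr_\Y(\Gamma_{ij})}$ cover the connected set $\supp\,\nu|_{\Y_j}$ and hence cannot be separated; the identity between $a_{i_1}$ and $a_{i_2}$ is then extracted by a limit computation along two sequences converging to $y$, using continuity of $f^c$ at $y$. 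You instead observe that $f_1^c - f_2^c$ takes values in the finite set $\{-a_i\,:\,i\in I\}$ on the dense set $\pr_\Y(\supp\,\pi)$, so by continuity it maps the relevant connected piece into a finite (hence discrete and closed) set and must be constant there. This single topological observation replaces both the covering argument and the pointwise limit computation, and it links \emph{all} components feeding into $\Y_j$ at once rather than two at a time. Both routes spend the finiteness of $I$ (you on discreteness of the range, the paper on closedness of the finite unions $B_1, B_2$), and the non-degeneracy step is combinatorially identical: your bipartite graph is disconnected exactly when the paper's index sets $J_1, J_2$ can be made disjoint, which is what \eqref{eq:degenerate} forbids.

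One point to tighten in case~2: there, constancy of $f_1^c - f_2^c$ is only obtained on $\supp\,\nu|_{\Y_j}$, and a point $y \in \pr_\Y(\supp\,\pi) \cap \Y_j$ need not lie in $\supp\,\nu|_{\Y_j}$, since the $\nu$-mass near $y$ may sit entirely in other components (components of $\supp\,\nu$ need not be relatively open). Hence "any $(x,y) \in \supp\,\pi \cap (\X_i\times\Y_j)$ witnesses $d_j = -a_i$" is slightly too strong; you should pick the witness with $y \in \pr_\Y\big(\supp\,\pi\cap(\X_i\times\Y_j)\big) \cap \supp\,\nu|_{\Y_j}$, which is non-empty because $\pi(\X_i\times\Y_j) > 0$ and \cref{lem:measurability}, applied to the plan restricted to $\X_i\times\Y_j$, places almost all of that mass inside the projection. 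This is in the same spirit as the care you already flag at the end of your proposal, so it is a refinement rather than a gap; in case~1 your "any witness" claim is fine as stated.
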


\vspace{-1ex}

\begin{example}{Semi-discrete optimal transport}{semi-discrete}
  Let $\X$ be a finite set with $n\in\NN$ elements and $\Y$ be a general Polish space.
  Questions concerning dual uniqueness in this setting, which is referred to as \emph{semi-discrete optimal transport}, have recently been raised by \textcite{altschuler2021asymptotics} and \textcite{bercu2021asymptotic}.
  \cref{thm:uniquedisconnected} provides simple and general answers in this context, since the uniqueness of the $\X_i$-restricted Kantorovich potentials and the continuity of all $f^c\in\CCcc$ turn out to be trivial (for continuous $c$).
  For example, if $\nu\in \PC(\Y)$ has connected support, Kantorovich potentials are \emph{always} unique in the above sense for \emph{any} $\mu\in\PC(\X)$.
  If the support of $\nu$ is disconnected with (at most) countably many components $\Y_j$, uniqueness holds if the measures $\mu$ and $\nu$ are non-degenerate in the sense of \cref{lem:degeneracy}.
  In particular, when $\mu$ is the uniform distribution on $\X$, Kantorovich potentials are guaranteed to be unique unless $\nu$ assigns a multiple of mass $1/n$ to individual connected components of $\supp\,\nu$.
\end{example}

\begin{figure}
  \begin{center}\small
    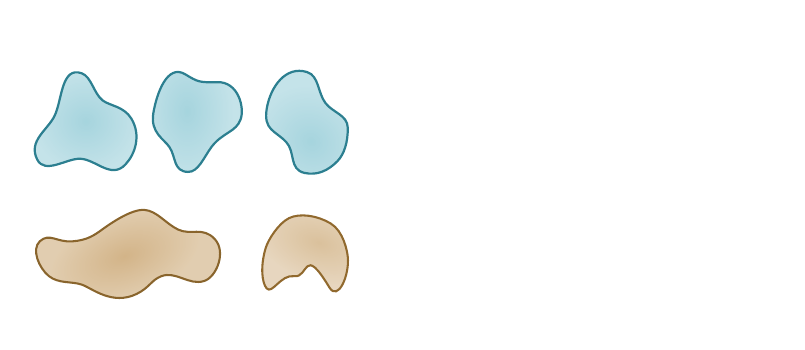
  \end{center}
  \caption{%
    Uniqueness of Kantorovich potentials in disconnected spaces.
    In sketch \textbf{(a)}, transport between $\X_1 \cup \X_2$ and $\Y_1$ is decoupled from transport between $\X_3$ and $\Y_2$.
    As the proof of \Cref{thm:uniquedisconnected} shows, the existence of a contact point $y_{1}$ links the (restricted) Kantorovich potentials $f_1\colon \X_1\to\RR$ and $f_2\colon\X_2\to\RR$.
    However, since $f_3\colon \X_3\to\RR$ is linked to neither $f_1$ nor $f_2$, uniqueness of the full Kantorovich potential $f\colon\X\to\RR$ is not guaranteed.
    In sketch \textbf{(b)}, $f_1$ is linked to $f_2$ via $y_{1}$ and $f_2$ is linked to $f_3$ via $y_{2}$.
    Therefore, uniqueness of $f$ follows from uniqueness of the restricted Kantorovich potentials if $f^c$ is continuous at $y_1$ and $y_2$.
  }
  \label{fig:uniquedisconnected}

\end{figure}

A sketch that assists in the interpretation of \cref{thm:uniquedisconnected} is provided by Figure~\ref{fig:uniquedisconnected}.
At the heart of the proof lies observation~\eqref{eq:potentialdecomposition}, which ensures that each Kantorovich potential in $\CCc(\mu, \nu)$ assumes the form 
\begin{equation*}
  f_a = \sum_{i\in I} \ind_{\X_i} \cdot (f_i + a_i)
  \qquad\text{on $\pr_\X(\supp\,\pi)$}
\end{equation*}
for some $a\in\RR^{|I|}$, where representatives $f_i\in\CC_{c_{\X_i}}(\mu_{\X_i}, \nu_{\X_i})$ of the $\X_i$-restricted problems have been fixed.
Not each choice of $a$ leads to a viable optimal solution $f_a\in S_c(\mu, \nu)$, however.
Due to the continuity of $f_a^c$, one can show that a value $a_{i_1}$ uniquely determines $a_{i_2}$ for $i_1, i_2\in I$ if the masses transported from $\X_{i_1}$ and $\X_{i_2}$ to $\Y$ touch one another, meaning that their topological closures have a common \emph{contact point} (see \cref{sec:uniquenessproof} for formal definitions).
The non-degeneracy of $\pi$ then ensures the existence of suitable contact points such that fixing $a_{i_0}$ for an arbitrary $i_0\in I$ actually determines the whole vector $a$, which in consequence implies dual uniqueness.
The full proof is documented in \cref{sec:uniquenessproof}, while the following paragraphs discuss the assumptions in \cref{thm:uniquedisconnected}.

\paragraph{Degeneracy.}

The uniqueness of Kantorovich potentials can break down if the condition of non-degeneracy of $\pi$ is not satisfied.
For a simple family of examples, consider non-negative continuous and symmetric costs with $c(x, x) = 0$ for $x\in\X = \Y$ in a setting where $\mu = \nu$ with $\supp(\mu) = \X_1 \cup \X_2$.
If the components $\X_1$ and $\X_2$ are strictly cost separated,
\begin{equation}\label{eq:strictlyseparatedcosts}
    \Delta = \inf_{x_1\in\X_1, x_2\in\X_2} c(x_1, x_2) > 0,
\end{equation}
each optimal plan $\pi$ satisfies $\pi(\X_1\times\X_2) = \pi(\X_2\times\X_1) = 0$.
In particular, no mass is transported between $\X_1$ and $\X_2$.
In this situation, any pair $a,b\in \RR$ with $|a-b|\leq \Delta$ defines a Kantorovich potential $f_{a,b}$ via $f_{a,b} = a$ on $\X_1$ and $f_{a,b} = b$ on $\X_2$.
A proof of this observation is provided in \cref{app:proofs}, \cref{lem:ambiguous}.

\paragraph{Continuity.}

Even in the presence of non-degenerate optimal transport plans, the uniqueness of Kantorovich potentials can in principle still break down due to discontinuities of the cost function or the potentials.
For instance, the construction of non-unique potentials $f_{a,b}$ in \cref{lem:ambiguous} under a $\Delta$-separation between components of a disconnected support can easily be carried over to cost functions that exhibit a jump by $\Delta$ between two disjoint subsets of $\supp\,\mu$, even if the support is connected.

For continuous costs, we discussed in \cref{sec:potentials} that the $c$-transformed Kantorovich potentials $f^c\in\CCcc(\mu, \nu)$ are always continuous when the family $\{c(x, \cdot)\,|\,x\in\X\}$ of partially evaluated costs is (locally) equicontinuous, or when the space $\X$ is compact (\cref{lem:regularitycompact}).
We also note that conditions~1 and 2 in \cref{thm:uniquedisconnected} can actually be relaxed, and it would in both cases suffice to require continuity only at the finite number of contact points (implicitly) constructed in the proof.
According to \cref{lem:regularitycompact}, this is for example guaranteed if $\pi$ induces regularity at each contact point $y\in\Y$.
In settings with $\nu(\partial\,\supp\,\nu) = 0$, we can sometimes even drop the additional continuity assumption altogether: if the functions $f^c$ are known to be continuous in the interior of $\supp\,\nu$, which is true for a wide range of superlinear costs (see \cref{sec:interior}, which in particular covers \cref{ex:continuitygeodesic}) or costs with uniformly bounded second derivatives (see \cref{ex:continuityconcave}), then we can apply \cref{lem:fullrestriction} to transition to the restricted problem with $\Xres = \X$ and $\Yres = \interior(\supp\,\nu)\subset \Y$.
This reformulation, where one only has to heed possible changes in decomposition \eqref{eq:components} when replacing $\Y$ by $\Yres$ (which may affect the degeneracy of optimal transport plans), makes sure that suitable contact points $y$ can always be  found in the interior of $\supp\,\nu$.

\paragraph{Countable index sets.}

Due to topological complications in the proof, the decomposition of $\supp\,\mu$ in \cref{thm:uniquedisconnected} is restricted to a finite index set $I$.
Indeed, if mass of an infinite number of components $\X_i$ is transported to a single component $\Y_j$, our proof technique cannot be used to establish the existence of suitable contact points.
Two settings where this issue can easily be reconciled is when 1) all sets $\Y_j$ receive mass from finitely many $\X_i$ only, or when 2) each $\Y_i$ is a single point.

\begin{corollary}{}{uniquedisconnected}
  Under either of the following additional assumptions, \cref{thm:uniquedisconnected} remains valid for countable index sets $I$, where uniqueness of the $\X_i$-restricted Kantorovich potentials is only required for $i\in I$ with $\mu(\X_i) > 0$.
  \begin{enumerate}[topsep=1ex, parsep=0.5ex, font={\normalfont\color{orange!45!black}}, label=\theenumi)]
    \item Condition~2 in \cref{thm:uniquedisconnected} holds and $\big|\{i\in I\,|\,\pi(X_i\times Y_j) > 0\}\big| < \infty$ for all
      $j\in J$.
    \item $Y_j$ consists of a single point for each $j\in J$ with $\nu(Y_j)
      > 0$ (then condition~2 in \cref{thm:uniquedisconnected} is always satisfied).
  \end{enumerate}
\end{corollary}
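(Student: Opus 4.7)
\emph{Plan.} My plan is to rerun the proof of \cref{thm:uniquedisconnected} and verify that its argument goes through for countable $I$ under each of the two additional assumptions. Recall the strategy sketched after the theorem: any $f \in \CCc(\mu, \nu)$ can be written on $\pr_\X(\supp\,\pi)$ as $f_a = \sum_{i \in I} \ind_{\X_i}(f_i + a_i)$ with constants $a_i$, and these constants are linked through \emph{contact points} $y \in \Y$ -- points in the closures of transported masses from two distinct components $\X_{i_1}, \X_{i_2}$ -- at which continuity of $f^c$ fixes the difference $a_{i_1} - a_{i_2}$. Non-degeneracy then promotes a single global shift to the only remaining freedom. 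For countable $I$, the only conceptual danger is that mass from infinitely many $\X_i$'s could enter a single $\Y_j$ without furnishing pairwise contact points; my task is to remove this danger under each of the two extra assumptions.

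\emph{Case 1.} Assume condition~2 of \cref{thm:uniquedisconnected} and that $I_j := \{i \in I : \pi(\X_i \times \Y_j) > 0\}$ is finite for every $j \in J$. Inside each $\Y_j$ with $\nu(\Y_j) > 0$, the family $\{\X_i\}_{i \in I_j}$ is finite, $\supp\,\nu|_{\Y_j}$ is connected, and $f^c|_{\Y_j}$ is continuous, so the contact-point construction of \cref{thm:uniquedisconnected} applies verbatim to link the constants $\{a_i\}_{i \in I_j}$. Form the graph $G$ on $\{i \in I : \mu(\X_i) > 0\}$ whose edges are the pairs $\{i_1, i_2\} \subseteq I_j$ arising for some $j$; the constants are uniquely determined inside each connected component of $G$. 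Connectedness of $G$ follows from non-degeneracy: if $I' \subsetneq \{i : \mu(\X_i) > 0\}$ were a union of connected components of $G$, set $J' := \{j \in J : I_j \subseteq I'\}$. By construction of $G$, neither $\pi(\X_{I'} \times \Y_{J \setminus J'})$ nor $\pi(\X_{I \setminus I'} \times \Y_{J'})$ can be positive, so $\mu(\X_{I'}) = \pi(\X_{I'} \times \Y_{J'}) = \nu(\Y_{J'})$ with $0 < \mu(\X_{I'}) < 1$, contradicting~\eqref{eq:degenerate}.

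\emph{Case 2.} If each $\Y_j$ is a singleton $\{y_j\}$, condition~2 of \cref{thm:uniquedisconnected} holds trivially, and each $y_j$ is automatically a common contact point for every pair in $I_j$. The subdifferential identity $f_i(x) + a_i + f^c(y_j) = c(x, y_j)$ on $\pr_\X(\supp\,\pi) \cap \X_i$, combined with \cref{lem:regularity} applied to sequences in $\supp\,\pi \cap (\X_i \times \{y_j\})$, pins down $a_{i_1} - a_{i_2}$ whenever $\{i_1, i_2\} \subseteq I_j$ for some $j$. The graph-connectedness argument of the previous paragraph then applies without modification; note that $|I_j|$ may now be infinite, but this is harmless because every pair in $I_j$ is directly linked through $y_j$ rather than through an internal topological construction.

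\emph{Main obstacle.} The substantive step in each case is verifying that the non-degeneracy graph argument remains valid for countable $I'$ and $J'$, which is straightforward because the sums in \eqref{eq:degenerate} are dominated by $1$ and therefore well-defined. A minor bookkeeping concern is discarding zero-mass components $\X_i$ from consideration, which is harmless because uniqueness is asserted only $\mu$-almost surely (equivalently, on $\pr_\X(\supp\,\pi)$ via \cref{lem:uniqueness}). The purely topological contact-point arguments internal to \cref{thm:uniquedisconnected} should carry over without change under each of the two assumptions, so no new analytic ingredient is required beyond the ones already marshalled there.
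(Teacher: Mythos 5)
Your proposal is correct and follows essentially the same route as the paper: you identify the same two obstacles (zero-mass components $\X_i$, and the failure of closedness of the unions $\bigcup_i \overline{p_\Y(\Gamma_{ij})}$ for countable $I$) and resolve them exactly as the paper does, by restricting to the finite sets $I_j$ in case 1 and by observing that a singleton $\Y_j=\{y_j\}$ is automatically a common contact point in case 2. Your explicit graph-connectivity packaging of the non-degeneracy step is only a cosmetic reorganization of the paper's partition argument.
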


In the special case of statement 2 of \cref{cor:uniquedisconnected} where all components $\X_i$ are also single points, we conclude that non-degeneracy of the vectors $(\mu(X_i))_{i\in I}$ and $(\nu(Y_j))_{j\in J}$ as in \cref{lem:degeneracy} is already sufficient to imply uniqueness of Kantorovich potentials without further assumptions.
This criterion has long been established for finite transportation problems via the theory of finite linear programming \parencite{klee1968facets,hung1986degeneracy}, but we are not aware of a comparable result that covers probability measures with countable support.
In particular, we note that \cref{cor:uniquedisconnected} yields sufficient conditions for Gaussian distributional limits in the countable discrete settings studied by \textcite{tameling2019empirical}.

\subsection{Connected support}
\label{sec:uniqueconnected}

One piece that is still missing to fully utilize \cref{thm:uniquedisconnected} is a set of criteria for the uniqueness of Kantorovich potentials on the individual connected components of the support.
In Euclidean settings, results in this regard are readily available, see for example Proposition~7.18 in \textcite{santambrogio2015optimal} (compactly supported measures) or more recently Appendix~B of \textcite{bernton2021entropic} and Corollary~2.7 of \textcite{del2021central} (possibly non-compactly supported measures).
The necessary techniques for these statements have long been established \parencite{brenier1991polar, gangbo1996geometry} and have been extended to the more general setting of manifolds \parencite{mccann2001polar,villani2008optimal,fathi2010optimal, figalli2011local}.
In the following, we briefly revisit the underlying arguments and spell out a general uniqueness result together with some of its consequences for probability measures with connected support.
To our knowledge, no formal statement with comparable scope has yet been assembled in this form, even though the involved arguments are well known (see for example \cite[Remark~10.30]{villani2008optimal}).

Let $\pi\in\couple(\mu, \nu)$ denote an optimal transport plan under a continuous cost function $c$.
Recalling the properties of the $c$-transform, any Kantorovich potential $f\in\CCc(\mu, \nu)$ satisfies $f(x) + f^c(y) \le c(x, y)$ for all $(x,y)\in\XY$ with equality if $(x, y)\in\partial_cf$. Fixing $(x, y)\in\supp\,\pi\subset\partial_cf$, this implies
\begin{equation*}
  x' \mapsto f(x') - c(x', y)\qquad\text{is minimal at}~x' = x.
\end{equation*}
Therefore, if $\X$ is a smooth manifold (without boundary) and the functions $f$ as well as $x'\mapsto c(x', y)$ are both differentiable at $x$, it has to hold that
\begin{equation}\label{eq:equalgradients}
  \nabla f(x) = \nabla_x c(x, y),
\end{equation}
by which we mean equality of the respective gradients in charts of $\X$.
This relation determines the derivatives of Kantorovich potentials in the set $\pr_\X(\Gamma_{\pi, c})\subset\X$, where we define
\begin{equation}\label{eq:gamma}
  \Gamma_{\pi, c} = \big\{(x, y)\in\supp\,\pi\,\big|\,\nabla_x c(x, y)~\text{exists}\big\}.
\end{equation}
In order to conclude uniqueness of $f$ up to constants from characterization \eqref{eq:equalgradients}, we will make use of the following auxiliary result. 
A proof is provided in \cref{app:proofs}.
\begin{lemma}{}{lipschitzgradients}
  Let $M\subset \X$ be an open and connected subset of a smooth manifold $\X$ and let $f_1, f_2\colon M\to\RR$ be locally Lipschitz.
  If $\nabla f_1 = \nabla f_2$ on a set that has full Lebesgue measure in charts of $M$, then $f_1 - f_2$ is constant on $M$.
\end{lemma}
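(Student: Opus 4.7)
The plan is to set $g = f_1 - f_2$, which is itself locally Lipschitz on the open connected subset $M$ of the smooth manifold $\X$, and to deduce that $g$ is constant. As a first step I would pass to an atlas. Fix any chart $\varphi\colon U \to V \subset \RR^d$ of an atlas of $M$ (we may assume $U\subset M$) and consider $\tilde g = g\circ\varphi^{-1}\colon V\to\RR$. Since $f_1\circ\varphi^{-1}$ and $f_2\circ\varphi^{-1}$ are locally Lipschitz on $V$, Rademacher's theorem asserts that each of them is differentiable at Lebesgue-almost every point of $V$. Intersecting these two full-measure sets with the full-measure set on which $\nabla f_1 = \nabla f_2$ in chart coordinates (which is full measure by hypothesis), we obtain $\nabla \tilde g = 0$ almost everywhere on $V$.

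The second step is the core analytic fact: a locally Lipschitz function on an open subset of $\RR^d$ with almost everywhere vanishing gradient is locally constant. I would prove this by mollification. Let $\rho_\epsilon$ be a standard smooth mollifier supported in the ball of radius $\epsilon$ and set
\begin{equation*}
  \tilde g_\epsilon = \tilde g * \rho_\epsilon
  \qquad\text{on}\qquad
  V_\epsilon = \big\{v\in V\,\big|\,\mathrm{dist}(v,\partial V) > \epsilon\big\}.
\end{equation*}
Then $\tilde g_\epsilon$ is smooth on $V_\epsilon$ and, by differentiation under the integral together with $\nabla \tilde g = 0$ a.e.,
\begin{equation*}
  \nabla \tilde g_\epsilon(v)
  = \int \rho_\epsilon(v-w)\,\nabla \tilde g(w)\dif w = 0
  \qquad\text{for all}~v\in V_\epsilon.
\end{equation*}
Hence $\tilde g_\epsilon$ is constant on every connected component of $V_\epsilon$. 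Since $\tilde g$ is continuous (indeed locally Lipschitz), $\tilde g_\epsilon \to \tilde g$ locally uniformly as $\epsilon\downarrow 0$, and connected components of $V$ are exhausted by connected components of $V_\epsilon$, so $\tilde g$ is constant on each connected component of $V$.

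Finally, I would promote this chart-wise local constancy to global constancy by connectedness of $M$. The previous step shows that every point of $M$ has an open neighborhood on which $g$ is constant, so $g$ is locally constant and continuous on $M$. Fixing any $x_0\in M$, the level set $\{x\in M\,|\,g(x) = g(x_0)\}$ is open (by local constancy) and closed (by continuity), and contains $x_0$; since $M$ is connected, it must equal $M$, so $g \equiv g(x_0)$ on $M$.

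The main obstacle is the mollification step: translating pointwise almost-everywhere vanishing of the classical gradient into a genuine locally constant conclusion. The local Lipschitz assumption is essential here — it allows differentiation under the integral and precludes pathological examples such as the Cantor staircase. The remaining ingredients (Rademacher's theorem and the clopen-level-set argument via connectedness) are routine.
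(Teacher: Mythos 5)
Your proof is correct and follows the same overall skeleton as the paper's: reduce to charts, establish that the difference is constant on each chart domain, and then upgrade local constancy to global constancy via connectedness of $M$ (the clopen level-set argument). The one place where you differ is the chart-local step: the paper simply cites a known result from nonsmooth analysis (formula~(2) of \cite{qi1989maximal}) to conclude that two Lipschitz functions with almost everywhere equal gradients differ by a constant on a ball, whereas you prove this from scratch by mollifying $g = f_1 - f_2$. Your mollification argument is sound, but note that the phrase \enquote{differentiation under the integral} slightly understates the real content: differentiating $\tilde g * \rho_\epsilon$ under the integral puts the derivative on $\rho_\epsilon$, and moving it onto $\tilde g$ requires identifying the distributional gradient of the locally Lipschitz function $\tilde g$ with its almost-everywhere classical gradient (i.e., $\tilde g \in W^{1,\infty}_{\mathrm{loc}}$ with weak derivative given by the pointwise one). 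You correctly flag that this identification is exactly where local Lipschitz continuity enters and why the Cantor staircase is excluded, so there is no gap --- just a point where the write-up should make the $W^{1,\infty}_{\mathrm{loc}}$ identification explicit. The payoff of your route is a self-contained, elementary proof; the paper's citation is shorter but less transparent.
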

At this point, several considerations have to be taken into account.
\begin{enumerate}
  \item The region $M \subset \X$ chosen for \cref{lem:lipschitzgradients} should have full $\mu$-measure, or it should at least be possible to uniquely recover the function $f$ from $f|_M$ on a set with full $\mu$-measure.
  \item The Kantorovich potential $f$ must be locally Lipschitz on $M$ in order for \cref{lem:lipschitzgradients} to be applicable.
    This also implies the existence of $\nabla f$ in a set of full Lebesgue measure in each chart (via Rademacher's theorem).
  \item The cost function has to be sufficiently smooth in its first argument along the transport.
    More precisely, $\pr_\X(\Gamma_{\pi, c})$ must have full Lebesgue measure in charts of $M$.
    Otherwise, relation~\eqref{eq:equalgradients} would not determine the gradients of $f$ suitably for application of \cref{lem:lipschitzgradients}.
\end{enumerate}

One immediate conclusion of the first point is the necessity of the condition $\closure(M) = \supp\,\mu$, without which some mass would be out of reach from the region $M$ controlled by the gradients.
The second and third points stress that the cost function should be locally Lipschitz in its first argument, in a way that is inherited to $\CCc$.
In order to choose $M$ properly for a general uniqueness statement, we recall the set $\Sigma_\pi \subset \supp\,\mu$ of points where induced regularity fails (\cref{def:regularity}).
We know that this set is closed (see \cref{lem:notpartiallycompact} in \cref{app:proofs}), that $\supp\,\mu \setminus \Sigma_\pi \subset \pr_\X(\supp\,\pi)$, and that $\Sigma_\pi$ contains all points of discontinuity of $f|_{\supp\,\mu}$ for any $f\in \CCc(\mu, \nu)$ (see \cref{lem:regularitycompact}).

\begin{theorem}{Uniqueness, connected support}{uniqueconnected}
  Let $X$ be a smooth manifold, $Y$ be Polish, $\mu\in\PC(\X)$, $\nu\in\PC(\Y)$, and $c\colon\XY\to\RR_+$ continuous such that $c(\cdot, y)$ is locally Lipschitz locally uniformly in $y\in\Y$ with $\ot_c(\mu, \nu) < \infty$.
  Let $\Sigma_\pi\subset\supp\,\mu$ and $\Gamma_{\pi, c}\subset\supp\,\pi$ be as in equations \eqref{eq:sigma} and \eqref{eq:gamma}  for an optimal $\pi \in \couple(\mu, \nu)$.
  If
  \begin{enumerate}[topsep=1ex, parsep=0.5ex, font={\normalfont\color{orange!45!black}}, label=\theenumi)]
    \item $\mu(\supp\,\mu \setminus \Sigma_\pi) = 1$,
    \item $M = \interior(\supp\,\mu \setminus \Sigma_\pi)$ is connected with
      $\closure(M) = \supp\,\mu$, 
    \item $\pr_\X(\Gamma_{\pi, c})$ has full Lebesgue measure in charts of
      $M$,
  \end{enumerate}
  then the Kantorovich potentials $\CCc(\mu, \nu)$ are unique.
\end{theorem}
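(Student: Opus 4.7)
The plan is to verify the hypotheses of \cref{lem:lipschitzgradients} on the open connected set $M$, conclude that $f_1 - f_2$ is constant on $M$ for any $f_1, f_2 \in \CCc(\mu, \nu)$, and then propagate the equality to a set of full $\mu$-measure via continuity.

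First I would establish that $f_i|_M$ is locally Lipschitz for $i \in \{1, 2\}$. Fix $x_0 \in M \subset \supp\,\mu \setminus \Sigma$. Since $\pi$ induces regularity at $x_0$ with respect to some compact $K \subset \Y$, and since $M$ is open in $\X$ with $M \subset \supp\,\mu$, one can find an open neighborhood $V \subset M \subset \X$ of $x_0$ on which \cref{lem:regularitycompact} yields
\begin{equation*}
  f_i(x) \,=\, \inf_{y \in K}\, c(x, y) - f_i^c(y).
\end{equation*}
As $c(\cdot, y)$ has uniform local Lipschitz constants over the compactum $K$, the right-hand side is locally Lipschitz on $V$, so $f_i$ is locally Lipschitz on $M$.

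Next I would identify the gradients. For any $(x, y) \in \supp\,\pi \subset \partial_c f_i$, the $c$-subdifferential bound $f_i(x') + f_i^c(y) \le c(x', y)$, saturated at $x' = x$, makes $x$ a local maximum of $x' \mapsto f_i(x') - c(x', y)$. For $x \in M$ at which both $f_i$ and $c(\cdot, y)$ are differentiable, this forces $\nabla f_i(x) = \nabla_x c(x, y)$. By Rademacher's theorem in charts, each $f_i$ is differentiable outside a Lebesgue null set $D_i \subset M$. Hence on the set $A = (\pr_\X(\Gamma) \cap M) \setminus (D_1 \cup D_2)$, which has full Lebesgue measure in charts of $M$ by assumption~3, one obtains $\nabla f_1 = \nabla f_2$. \cref{lem:lipschitzgradients} then gives $f_1 - f_2 \equiv \alpha$ on $M$ for some $\alpha \in \RR$.

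Finally, I would extend the equality from $M$ to $\supp\,\mu \setminus \Sigma$. Assumption~2 gives $\closure(M) = \supp\,\mu$, and \cref{lem:regularitycompact} renders $f_i|_{\supp\,\mu}$ continuous on $\supp\,\mu \setminus \Sigma$. Approximating any $x \in \supp\,\mu \setminus \Sigma$ by a sequence in $M$ therefore propagates $f_1 - f_2 = \alpha$ to the limit. Assumption~1 ensures $\mu(\supp\,\mu \setminus \Sigma) = 1$, and \cref{lem:uniqueness} concludes almost sure uniqueness. The main technical obstacle I anticipate is guaranteeing that the infimum representation from \cref{lem:regularitycompact} holds on an open subset of the manifold $\X$ rather than merely of $\supp\,\mu$, which is what makes differential calculus available on $M$; this is precisely why $M$ is defined as the manifold interior of $\supp\,\mu \setminus \Sigma$. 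The analytic-but-not-Borel nature of $\pr_\X(\Gamma)$ causes no trouble, since the argument only needs Lebesgue measurability in charts.
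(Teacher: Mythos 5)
Your proposal is correct and follows essentially the same route as the paper's proof: establish local Lipschitzness of potentials on $M$ via the infimum representation of \cref{lem:regularitycompact}, match gradients on $\pr_\X(\Gamma)$ intersected with the Rademacher differentiability set, apply \cref{lem:lipschitzgradients}, and propagate to $\supp\,\mu\setminus\Sigma$ by continuity. The point you flag about needing the representation on a set open in the manifold $\X$ (not merely relatively open in $\supp\,\mu$) is handled exactly as you suggest, by intersecting with the open set $M$.
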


\vspace{-1ex}

\begin{remark}{Uniqueness of optimal transport maps}{transportmap}
  Solving equation~\eqref{eq:equalgradients} for $y\in\Y$ is a standard method to construct and study optimal transport maps $\otmap\colon\X\to\Y$, see \cite[Chapter~10]{villani2008optimal}.
  In this context, a natural requirement is the injectivity of $\nabla_x c(x, \cdot)$, denoted as the \emph{twist condition}, which also implies that an optimal map is uniquely defined wherever \eqref{eq:equalgradients} holds.
  To make sure that $\otmap$ is determined by \eqref{eq:equalgradients} $\mu$-almost surely, one usually imposes some form of regularity on $c$ (such as local semiconcavity) and requires the probability measure $\mu$ to assign no mass to sets on which Kantorovich potentials may be non-differentiable (e.g., by assuming a Lebesgue density).
  \Cref{thm:uniqueconnected} helps clarify to which extent similar assumptions on $c$ and $\mu$ are necessary if we are only interested in uniqueness of the Kantorovich potentials, and not in the uniqueness of optimal maps.
\end{remark}

\vspace{-1ex}

\begin{proof}
  According to \cref{lem:regularitycompact}, each point of $M$ admits an open neighborhood $U$ and a compactum $K\subset \Y$ such that $f(x) = \inf_{y\in K} c(x, y) - f^c(y)$ for all $x\in U$ and $f\in\CCc(\mu, \nu)$.
  Since $c(\cdot, y)$ is locally Lipschitz uniformly in $y\in K$ by assumption, we conclude that $f$ is locally Lipschitz on $M$.
  Now, let $f_1, f_2\in\CCc(\mu, \nu)$ and let $A$ be the subset of $M$ where both functions are differentiable.
  Set $B = A \cap \pr_\X(\Gamma_{\pi, c})$, which is the set where the gradients of $f_1$ and $f_2$ have to coincide via~\eqref{eq:equalgradients}.
  Due to Rademacher's Theorem (see, e.g., \cite[Theorem~3.1.6]{federer2014geometric}) and assumption~3, the set $B$ has full Lebesgue measure in each chart of $M$.
  We can thus apply \cref{lem:lipschitzgradients} under assumption~2 and conclude that $f_1 = f_2$ (up to a constant) on $M$.
  Since any $f\in\CCc(\mu, \nu)$ is continuous at each point in $\supp\,\mu\setminus\Sigma_\pi \subset \closure(M)$ via \cref{lem:regularitycompact}, the function $f$ is uniquely determined on $\supp\,\mu \setminus\Sigma_\pi$ by its values on $M$.
  This shows that $f_1 = f_2$ (up to a constant) on $\supp\,\mu \setminus \Sigma_\pi$, which is a set of full $\mu$-measure by condition~1.
\end{proof}

Without further context, the assumptions in this theorem may seem to be fairly opaque, as they rely on specific details of an optimal transport plan $\pi$, like the $\mu$-measure of $\Sigma_\pi$ and topological properties of $\supp\,\mu \setminus \Sigma_\pi$.
In more specialized settings, however, the requirements of \cref{thm:uniqueconnected} can often be checked easily.
As a first example, we assume $Y$ to be compact.
If $c$ is differentiable in its first component, then all assumptions of \cref{thm:uniqueconnected} that depend on $\pi$ are automatically satisfied and only conditions on the topology of $\supp\,\mu$ remain.

\begin{corollary}{}{uniqueconnectedcompact}
  Let $X$ be a smooth manifold, $Y$ be compact Polish, $\mu\in\PC(\X)$, $\nu\in\PC(\Y)$, and $c\colon\XY\to\RR_+$ continuous such that $c(\cdot, y)$ is differentiable and locally Lipschitz uniformly in $y\in\Y$ with $\ot_c(\mu, \nu) < \infty$.
  If $M = \interior(\supp\,\mu)$ is connected and $\closure(M) = \supp\,\mu$, then the Kantorovich potentials $\CCc(\mu, \nu)$ are almost surely unique.
\end{corollary}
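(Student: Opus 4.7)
The plan is to reduce \cref{cor:uniqueconnectedcompact} to a direct application of \cref{thm:uniqueconnected}. The technical assumption that $c(\cdot, y)$ is locally Lipschitz uniformly in $y\in\Y$ clearly implies the weaker hypothesis of \cref{thm:uniqueconnected} that $c(\cdot, y)$ is locally Lipschitz locally uniformly in $y$, so it remains only to verify conditions~1, 2, and 3 of that theorem for an arbitrary optimal plan $\pi\in\couple(\mu, \nu)$ (existence of which is guaranteed since $\ot_c(\mu, \nu) < \infty$).

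First I would show that compactness of $\Y$ forces $\Sigma = \emptyset$: taking $K = \Y$ and $U = \supp\,\mu$ in the definition \eqref{eq:partiallycompact} of induced regularity, the identity $\pr_\X(\supp\,\pi) \cap U = \pr_\X(\supp\,\pi \cap (U\times K))$ is trivially satisfied, so $\pi$ induces regularity everywhere on $\supp\,\mu$. Consequently, $\mu(\supp\,\mu \setminus \Sigma) = \mu(\supp\,\mu) = 1$, establishing condition~1, and condition~2 reduces verbatim to the hypothesis that $M = \interior(\supp\,\mu)$ is connected with $\closure(M) = \supp\,\mu$.

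For condition~3, the assumption that $c(\cdot, y)$ is differentiable for every $y\in\Y$ means that $\Gamma$ as defined in \eqref{eq:gamma} coincides with $\supp\,\pi$. From \cref{lem:regularitycompact} (or the general inclusion $\supp\,\mu \setminus \Sigma \subset \pr_\X(\supp\,\pi)$ recalled in \cref{sec:potentials}), we obtain
\begin{equation*}
  M \subset \supp\,\mu = \supp\,\mu \setminus \Sigma \subset \pr_\X(\supp\,\pi) = \pr_\X(\Gamma),
\end{equation*}
so $\pr_\X(\Gamma)$ trivially has full Lebesgue measure in charts of $M$. All three conditions of \cref{thm:uniqueconnected} being met, the almost sure uniqueness of $\CCc(\mu, \nu)$ follows.

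There is no real obstacle here; the role of compactness is simply to trivialize the regularity-induced set $\Sigma$, and the role of differentiability in the first argument is to make $\Gamma$ coincide with $\supp\,\pi$. All the substantive work has been moved into \cref{thm:uniqueconnected}.
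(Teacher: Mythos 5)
Your proposal is correct and follows essentially the same route as the paper: compactness of $\Y$ gives $\Sigma=\emptyset$ (hence conditions~1 and~2 of \cref{thm:uniqueconnected}), and differentiability gives $\Gamma=\supp\,\pi$ with $\pr_\X(\Gamma)\supset M$, so condition~3 holds. The only cosmetic difference is that the paper justifies $\pr_\X(\supp\,\pi)=\supp\,\mu$ via the closedness of the projection $\pr_\X$ for compact $\Y$, whereas you invoke the inclusion $\supp\,\mu\setminus\Sigma\subset\pr_\X(\supp\,\pi)$ from \cref{lem:regularitycompact}; both are valid.
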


\begin{proof}
  For compact $Y$, it holds by definition that $\Sigma_\pi = \emptyset$.
  Thus, conditions~1 and 2 of \cref{thm:uniqueconnected} are satisfied.
  Condition~3 is ensured since $c$ is differentiable in the first component and we thus find $\Gamma_{\pi, c} = \pr_\X(\supp\,\pi) = \supp\,\mu$ (since the projection $\pr_\X$ is a closed map for compact $\Y$).
\end{proof}

If $Y$ is not compact, uniqueness statements based on \cref{thm:uniqueconnected} hinge on the behavior of the cost function outside of $Y$-compacta.
The simplest setting of this kind is the one where $c(\cdot, y)$ is (locally) Lipschitz uniformly in $y\in Y$.
Then, a statement analogous to \cref{cor:uniqueconnectedcompact} is possible, where the only obstacle is to assert condition~3 of \cref{thm:uniqueconnected}.
To provide a convenient sufficient criterion to this end, we work with the assumption that
\begin{equation}\label{eq:absolutecontinuity}
  \lambda ~\text{is absolutely continuous w.r.t.}~\varphi_\#\mu~
  \text{on}~\range\,\varphi
\end{equation}
for any chart $\varphi$ of $M = \interior(\supp\,\mu)$, where  $\varphi_\#\mu\coloneqq \mu\circ \varphi^{-1}$ corresponds to the push-forward  measure of $\mu$ under $\varphi$ and $\lambda$ denotes the Lebesgue measure.
Loosely speaking, this property states that the mass of $\mu$ can be placed quite arbitrarily on $\supp\,\mu$, as long as it contains a continuous component everywhere on its support.
As an example where condition~\eqref{eq:absolutecontinuity} fails for $\X = \RR$, consider a measure $\mu$ that is concentrated on the rational numbers $\mathbb{Q}$ but where $\supp\,\mu$ contains an open set.

\begin{corollary}{}{uniqueconnectedlipschitz}
  Let $X$ be a smooth manifold, $Y$ Polish, $\mu\in\PC(\X)$, $\nu\in\PC(\Y)$, and $c\colon\XY\to\RR_+$ continuous such that $c(\cdot, y)$ is differentiable and locally Lipschitz uniformly in $y\in\Y$ with $\ot_c(\mu, \nu) < \infty$.
  If $M = \interior(\supp\,\mu)$ is connected such that $\closure(M) = \supp\,\mu$ and condition \eqref{eq:absolutecontinuity} holds, then the Kantorovich potentials $\CCc(\mu, \nu)$ are unique.
\end{corollary}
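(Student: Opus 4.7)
The plan is to follow the proof of \cref{thm:uniqueconnected} but to exploit the stronger uniformity in the hypothesis to bypass condition~1 of that theorem. The first thing I would record is that, since $c(\cdot,y)$ is locally Lipschitz uniformly in $y\in\Y$, every $f\in\CCc(\mu,\nu)$ is itself locally Lipschitz on all of $\X$—it is an infimum over the equi-Lipschitz family $\{c(\cdot,y) - g(y)\}_{y\in\Y}$—and in particular continuous on $\X$. Consequently I can afford to establish the equality $f_1 - f_2 \equiv \const$ only on $M = \interior(\supp\,\mu)$: continuity together with $\closure(M) = \supp\,\mu$ will then extend the relation to all of $\supp\,\mu$, giving $\mu$-almost sure uniqueness.

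Next, given $f_1, f_2\in\CCc(\mu,\nu)$ and any optimal plan $\pi$, I would let $A\subset M$ denote the set on which both $f_1$ and $f_2$ are differentiable. By Rademacher's theorem, $A$ has full Lebesgue measure in every chart of $M$. Since $c(\cdot,y)$ is differentiable for every $y$, the set $\Gamma$ of \eqref{eq:gamma} agrees with $\supp\,\pi$, so relation \eqref{eq:equalgradients} forces $\nabla f_1(x) = \nabla_x c(x,y) = \nabla f_2(x)$ at every $x \in A \cap \pr_\X(\supp\,\pi)$. Granted that $B = A \cap \pr_\X(\supp\,\pi)$ has full Lebesgue measure in charts of $M$, \cref{lem:lipschitzgradients} applied to the connected open set $M$ concludes that $f_1 - f_2$ is constant on $M$, which is exactly the statement above.

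The main technical step—the one that genuinely uses hypothesis \eqref{eq:absolutecontinuity}—is verifying this full-measure property of $B$. My plan is to invoke \cref{lem:measurability} to produce a Borel set $A_0 \subset \pr_\X(\supp\,\pi)$ with $\mu(A_0) = 1$, so that $\mu(\domain\,\varphi \setminus A_0) = 0$ for any chart $\varphi$ of $M$ (since $\domain\,\varphi \subset M \subset \supp\,\mu$). As $\varphi$ is a homeomorphism onto its range, the Borel set $\range\,\varphi \setminus \varphi(A_0 \cap \domain\,\varphi)$ has $\varphi_\#\mu$-measure equal to $\mu(\domain\,\varphi \setminus A_0) = 0$, hence Lebesgue measure zero by \eqref{eq:absolutecontinuity}. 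Intersecting with the Rademacher full-measure set $\varphi(A \cap \domain\,\varphi)$ shows that $\varphi(B \cap \domain\,\varphi)$ has full Lebesgue measure in $\range\,\varphi$, as required. The only subtle point is the non-Borel nature of projected sets like $\pr_\X(\supp\,\pi)$, which is precisely what \cref{lem:measurability} is designed to navigate.
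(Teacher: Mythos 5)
Your proposal is correct and follows essentially the same route as the paper: uniform local Lipschitzness of $c(\cdot,y)$ in $y$ gives local Lipschitz continuity of every potential (so the set $\Sigma$ of \cref{thm:uniqueconnected} can effectively be taken empty), differentiability of $c$ gives $\Gamma = \supp\,\pi$, and \cref{lem:measurability} combined with \eqref{eq:absolutecontinuity} upgrades the $\mu$-full set $A_0\subset\pr_\X(\supp\,\pi)$ to one of full Lebesgue measure in charts, after which Rademacher and \cref{lem:lipschitzgradients} finish the argument on the connected set $M$ and continuity extends it to $\closure(M)=\supp\,\mu$. The only (harmless) overstatement is that $f$ is locally Lipschitz \emph{on all of $\X$} rather than on the relevant connected component containing $\supp\,\mu$, since $c$-concave functions may equal $-\infty$ elsewhere.
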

\begin{proof}
  Since $c(\cdot, y)$ is assumed to be locally Lipschitz uniformly in $y\in\Y$, every Kantorovich potential $f\in\CCc(\mu, \nu)$ is locally Lipschitz on $\supp\,\mu$.
  Looking at the proof of \cref{thm:uniqueconnected}, we furthermore note that the set $\Sigma_\pi$ can actually be replaced by any other subset of $\X$ that contains all points at which some Kantorovich potential fails to be locally Lipschitz.
  Therefore, we may functionally assume $\Sigma_\pi = \emptyset$ in conditions~1 and~2 of \cref{thm:uniqueconnected}, and only have to show that condition~3 holds with $M = \interior(\supp\,\mu)$.
  Since $\Gamma_{\pi, c} = \supp\,\pi$ due to differentiability of $c$, we find a Borel set $A \subset \pr_\X(\Gamma_{\pi, c}) \subset \X$ that satisfies $\mu(A) = 1$ (\cref{lem:measurability}).
  Hence, $\range\,\varphi \setminus \varphi(A \cap \domain\,\varphi)$ is a $\varphi_\#\mu$-null set for any chart $\varphi$ of $M$.
  Due to condition~\eqref{eq:absolutecontinuity}, it is also a Lebesgue-null set and we conclude that $\pr_\X(\Gamma_{\pi, c})$ has full Lebesgue measure in charts of $M$.
\end{proof}

\begin{remark}{Local semiconcavity}{semiconcave}
  In \cref{ex:continuityconcave}, we pointed out that Kantorovich potentials can inherit semiconcavity from the cost function.
  In fact, it is possible to formulate \cref{cor:uniqueconnectedlipschitz} for costs where $c(\cdot, y)$ is locally semiconcave (instead of locally Lipschitz) uniformly in $y\in\Y$, in the sense of equation~\eqref{eq:semiconcave}.
  This alternative formulation, whose proof is documented in \cref{app:proofs}, can be put to use in settings where \cref{cor:uniqueconnectedlipschitz} might not apply directly.
  For example, let $X$ and $Y$ be two (possibly distinct) affine subspaces of $\RR^d$ equipped with an atlas of linear charts.
  Then the squared Euclidean cost function $c(x, y) = \|x - y\|^2$ is differentiable and locally semiconcave (but \emph{not} necessarily locally Lipschitz) in $x$ uniformly in $y$.
  Consequently, the Kantorovich potentials in this setting are unique under the mild conditions on $\mu$ imposed by \cref{cor:uniqueconnectedlipschitz}.
  If $\supp\,\mu$ and $\supp\,\nu$ are separated sets, the same holds for Euclidean cost functions $c(x,y)=\|x-y\|^p$ with $0 < p \le 2$.
\end{remark}

One question largely unaddressed by the previous results is how \cref{thm:uniqueconnected} fares in the general setting that $c(\cdot, y)$ is actually no more than locally Lipschitz \emph{locally} uniformly in $y\in Y$, which is typically the case for rapidly growing cost functions.
In the next section, we show that such cost functions often confine the set $\Sigma_\pi$ to the boundary of $\supp\,\mu$, which makes the application of \cref{thm:uniqueconnected} particularly simple: condition~1 collapses into $\mu(\partial\,\supp\,\mu) = 0$ and condition~2 only relies on the topology of $\supp\,\mu$.
Furthermore, condition~3 is always satisfied when $c$ is differentiable in the first component, since then $\Gamma_{\pi, c} = \supp\,\pi$ and $M \subset \supp\,\mu \setminus \Sigma_\pi \subset \pr_\X(\Gamma_{\pi, c})$ via \cref{lem:regularitycompact}.
This leads to the following statement.

\begin{corollary}{}{uniqueconnectedboundary}
  Let $X$ be a smooth manifold, $Y$ Polish, $\mu\in\PC(\X)$, $\nu\in\PC(\Y)$, and $c\colon\XY\to\RR_+$ continuous such that $c(\cdot, y)$ is differentiable and locally Lipschitz locally uniformly in $y\in\Y$ with $\ot_c(\mu, \nu) < \infty$.
  If $\Sigma_\pi \subset \partial\,\supp\,\mu$ for an optimal plan $\pi\in\couple(\mu, \nu)$ and if  $M = \interior(\supp\,\mu)$ is connected such that $\closure(M) = \supp\,\mu$ and $\mu(M) = 1$, then the Kantorovich potentials $\CCc(\mu, \nu)$ are unique.
\end{corollary}

\section{Interior regularity}
\label{sec:interior}

We now investigate conditions on the cost function $c$ under which each optimal transport plan $\pi$ induces regularity in the interior of the support of $\mu$.
In other words, we search for criteria that ensure
\begin{equation}\label{eq:sigmaboundary}
  \Sigma_\pi \subset \partial\,\supp\,\mu,
\end{equation}
where $\Sigma_\pi\subset\supp\,\mu$ was defined in \eqref{eq:sigma} and denotes the set of points where $\pi$ fails to induce regularity.
This property is of interest for both \cref{thm:uniquedisconnected} (applied to $\nu$ in place of $\mu$) and \cref{thm:uniqueconnected} (in the form of \cref{cor:uniqueconnectedboundary}), as it guarantees continuity of Kantorovich potentials in the interior of the support.
One possibility to show \eqref{eq:sigmaboundary} is to rule out that points from the interior of $\supp\,\mu$ can be \enquote{sent towards infinity} via any optimal plan.
In \cref{lem:interiorregularity} below, we demonstrate this to be true for a broad class of (asymptotically) quickly growing cost functions.
In fact, \eqref{eq:sigmaboundary} will often fail if the costs are not sufficiently quickly growing (in this case, however, \cref{cor:uniqueconnectedlipschitz} will often be applicable to conclude uniqueness of the potentials).
As preparation, we introduce some notation.
% Instead of working with \cref{def:regularity} of induced regularity directly, we will show the slightly stronger result that only boundary points can be \enquote{sent towards infinity}, which implies~\eqref{eq:sigmaboundary}.
% To achieve this, the cost function has to behave in a certain way if $y$ leaves all compacta in $\Y$.

\begin{definition}{}{}
  Let $(y_n)_{n\in\NN}\subset Y$ be a sequence.
  We write $y_n \to \infty$ (as $n\to\infty$) if each compact set in $\Y$ contains only a finite number of elements.
\end{definition}

Despite the suggestive notation, note that $y\to\infty$ does not necessarily mean that $y$ leaves all bounded sets if $\Y$ is not a proper metric space, i.e., if not all bounded closed sets in $\Y$ are compact.

\begin{definition}{}{}
  For any $(x, y)\in\XY$, define the \emph{region of dominated cost}
  \begin{equation}\label{eq:dominatedcost}
    C(x, y) = \big\{x'\in\X\,\big|\,c(x', y) \le c(x, y)\big\}.
  \end{equation}
  For a given sequence $(x_n, y_n)_{n\in\NN}\subset\XY$, also define $C_\infty = \limsup_{n\to\infty} C(x_n, y_n)$.
\end{definition}

In Euclidean settings, for example, under costs $c(x,y) = \|x - y\|^p$ for $x,y\in\RR^d$ and $p \geq 1$, the set $C(x, y)$ is the closed ball centered at $y$ with radius $\|x - y\|$.
As we will see next, the geometry of the asymptotic set $C(x, y)$ as $y\to\infty$ shapes the region where Kantorovich potentials do not attain finite values for quickly growing costs.

\begin{lemma}{Interior regularity}{interiorregularity}
  Let $\X$ and $\Y$ be Polish, $c\colon\XY\to\RR_+$ be continuous, and $\mu\in\PC(\X)$, $\nu\in\PC(\Y)$ with $\ot_c(\mu, \nu) < \infty$.
  Let $\pi\in\couple(\mu, \nu)$ be optimal and $x\in\supp\,\mu$ such that there exists $(x_n, y_n)_{n\in\NN}\subset\supp\,\pi$ with $x_n\to x$ and $y_n\to\infty$ as $n\to\infty$.
  If
  \begin{enumerate}[topsep=1ex, parsep=0.5ex, font={\normalfont\color{green!30!black}}, label=\theenumi)]
    \item there is $(\tilde{x}_n)_{n\in\NN}\subset \X$ converging to $x$ with $\lim_{n\to\infty} c(x_n, y_n) - c(\tilde{x}_n, y_n) = \infty$,
  \end{enumerate}
  then all $f\in\CCc(\mu, \nu)$ assume the value $-\infty$ on $C_\infty = \limsup_{n\to\infty} C(\tilde{x}_n, y_n)$.
  If additionally
  \begin{enumerate}[resume, topsep=1ex, parsep=0.5ex, font={\normalfont\color{green!30!black}}, label=\theenumi)]
    \item $C_\infty$ contains an open subset $U$ that touches $x$, meaning $x\in \closure(U)$,
  \end{enumerate}
  then $x\in\partial\,\supp\,\mu$.
\end{lemma}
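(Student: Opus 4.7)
The plan is to propagate the cost divergence in condition~1 into a divergence of $f$-values via the $c$-subdifferential relation. Since $(x_n, y_n) \in \supp\,\pi \subset \partial_c f$ with $c(x_n, y_n) \ge 0$ finite, both $f(x_n)$ and $f^c(y_n)$ must be finite (as each lies in $\RR \cup \{-\infty\}$ by $c$-concavity and their sum is finite). Combining $f(x_n) + f^c(y_n) = c(x_n, y_n)$ with the slack inequality $f(x') + f^c(y_n) \le c(x', y_n)$ valid for all $x' \in \X$, one obtains the master inequality
\begin{equation*}
  f(x') - f(x_n) \le c(x', y_n) - c(x_n, y_n).
\end{equation*}
Specialising to $x' \in C(\tilde{x}_n, y_n)$, the right-hand side is dominated by $c(\tilde{x}_n, y_n) - c(x_n, y_n)$, which tends to $-\infty$ by assumption~1.

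Next, for any $x' \in C_\infty$, the set-theoretic $\limsup$ produces a subsequence $(n_k)$ with $x' \in C(\tilde{x}_{n_k}, y_{n_k})$ for every $k$. Because $f$ is upper-semicontinuous (since $c$ is continuous) and never attains $+\infty$, the inequality $\limsup_k f(x_{n_k}) \le f(x) < +\infty$ holds, so $f(x_{n_k})$ is bounded from above. The master inequality along this subsequence then forces $f(x') = -\infty$, which proves the first conclusion.

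For the second assertion, I would argue by contradiction: assume $x \in \interior(\supp\,\mu)$, so that $x$ admits an open neighbourhood $V \subset \supp\,\mu$. Since $x \in \closure(U)$, the set $U \cap V$ is open, non-empty, and lies inside $\supp\,\mu$. Exploiting that $\pr_\X(\supp\,\pi)$ is dense in $\supp\,\mu$ (\cref{lem:measurability}), pick $x^* \in \pr_\X(\supp\,\pi) \cap U \cap V$ and an accompanying $y^*$ with $(x^*, y^*) \in \supp\,\pi \subset \partial_c f$. As before, this yields $f(x^*) = c(x^*, y^*) - f^c(y^*) \in \RR$. But $x^* \in U \subset C_\infty$, so the first part forces $f(x^*) = -\infty$, a contradiction. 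Therefore $x \in \supp\,\mu \setminus \interior(\supp\,\mu) = \partial\,\supp\,\mu$.

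The only step needing genuine care is the uniform upper bound on $f(x_{n_k})$: a priori $f(x)$ could be $-\infty$, in which case $f(x_{n_k}) \to -\infty$ as well, but in both that case and the case $f(x) \in \RR$ the sequence stays bounded above, so the divergence of the cost difference drives the bound on $f(x')$ to $-\infty$. The rest is a routine unpacking of the definitions of $c$-subdifferential, set-theoretic $\limsup$, and support of a measure; no deeper obstacle is anticipated.
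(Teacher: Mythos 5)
Your argument is correct and follows essentially the same route as the paper's proof: the chain $f(x') \le c(x', y_n) - f^c(y_n) \le c(\tilde{x}_n, y_n) - f^c(y_n) = c(\tilde{x}_n, y_n) - c(x_n, y_n) + f(x_n)$ (your ``master inequality''), the upper bound $\sup_n f(x_n) < \infty$ from upper-semicontinuity of $f$, and the contradiction via density of $\pr_\X(\supp\,\pi)$ in $\supp\,\mu$ for the second claim. Your explicit remark that the bound on $f(x_{n_k})$ holds whether $f(x)$ is finite or $-\infty$ is a welcome clarification of a point the paper passes over quickly.
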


\begin{proof}
  Let $x'\in C_\infty$. After a suitable subsequence has been taken, we may assume that $x'\in C(\tilde{x}_n, y_n)$ for all $n\in\NN$.
  For $f\in\CCc(\mu, \nu)$, note that $f(x_n) = c(x_n, y_n) - f^c(y_n)$ and observe
  \begin{align*}
    f(x')
    &\le
    c(x', y_n) - f^c(y_n) \\
    &\le
    c(\tilde{x}_n, y_n) - f^c(y_n) \\
    &=
    c(\tilde{x}_n, y_n) - c(x_n, y_n) + f(x_n)
    \to
    -\infty
  \end{align*}
  due to condition~1 and the upper-semicontinuity of $f$, which implies $\sup_{n\in\NN} f(x_n) < \infty$.
  This shows the first claim.
  To show the second claim, we lead $x\in\interior(\supp\,\mu) \cap \closure(U)$ to a contradiction: by density of $\pr_\X(\supp\,\pi)$ in $\supp\,\mu$, such an $x$ would imply $\pr_\X(\supp\,\pi) \cap U \neq \emptyset$.
  Thus, there would exist $(x',y')\in\supp\,\pi \subset \partial_c f$ with $-\infty = f(x') = c(x', y') - f^c(y') > -\infty$.
\end{proof}

The assumptions and interpretation of \cref{lem:interiorregularity} deserve some more commentary.
\begin{enumerate}
\item
  By definition, any $x\in\Sigma_\pi$ admits a suitable sequence $(x_n, y_n)_{n\in\NN}\subset\supp\,\pi$ with $x_n\to x$ and $y_n\to\infty$, meaning that \cref{lem:interiorregularity} can be applied to all points at which $\pi$ fails to induce regularity.
  If conditions 1 and 2 are shown to hold for such sequences, the desired result, $\Sigma_\pi \subset \partial\,\supp\,\mu$, immediately follows.
\item
  \cref{lem:interiorregularity} is formulated with very specific sequences $(x_n, y_n)_{n\in\NN}$ in mind, namely those that live on the support of an optimal plan $\pi$.
  However, for many cost functions $c$ of interest, conditions~1 and 2 can be shown to hold for \emph{all} sequences $(x_n, y_n)_{n\in\NN} \subset \XY$ where $x_n$ converges and $y_n\to\infty$.
  For example, condition~1 is generically implied for every $c$ that satisfies the abstract growth property~$(\text{H}_\infty)_2$ in \textcite[Chapter~10]{villani2008optimal}.
  Thus, explicit knowledge about $\mu$, $\nu$, or $\pi$ is often not necessary to conclude $\Sigma_\pi \subset \partial\,\supp\,\mu$.
  \item
  The claim of \cref{lem:interiorregularity}, which relates transport towards infinity with the divergence of Kantorovich potentials, can be reversed in a certain sense.
  Fix any $x\in\X$, not necessarily in $\supp\,\mu$.
  If $f(x) = -\infty$, this implies $\inf_{y \in \Y} c(x,y)-f^c(y)= -\infty$.
  It follows that there exists $(y_n)_{n\in\NN}$ with $c(x, y_n) - f^c(y_n) \to -\infty$ as $n\to\infty$.
  Due to the upper-semicontinuity of $f^c$, we conclude $y_n\to\infty$.
  It is clear that $f$ then attains the value $-\infty$ on the full set $C_\infty = \limsup_{n\to\infty} C(x, y_n)$.
  If condition~2 of \cref{lem:interiorregularity} holds, this set has to be disjoint from the interior of $\supp\,\mu$.
\end{enumerate}

\begin{example}{}{innereuclidean}
  Let $\X = \Y = \RR^d$ with squared Euclidean costs $c(x, y) = \|x - y\|^2$.
  Given any sequence $x_n\to x\in\RR^d$ and $y_n\to\infty$, the choice $\tilde{x}_n = x_n + (y_n - x_n)/\|y_n - x_n\|^{3/2}$ provides a perturbation of $x_n$ that satisfies condition~1 in \cref{lem:interiorregularity}.
  To see that condition~2 is also satisfied, note that the asymptotic set $C_\infty$ will contain an open half-space anchored at the point $x$  (see \cref{fig:interiorregularity}a for an illustration).
  The (inwards pointing) normal direction is given by an (arbitrary) limit point $u$ of the directions $u_n = (y_n - \tilde{x}_n)/\|y_n - \tilde{x}_n\|$.
  Such a limit exists, since the unit sphere in $\RR^d$ is compact.
  Therefore, \cref{lem:interiorregularity} lets us conclude property \eqref{eq:sigmaboundary} and provides additional insights about the set $\{f = -\infty\}$ and its relation to the direction $u$ of transport towards infinity.
  Indeed, the fact that $C_\infty$ always contains half-spaces (which is also true for all other $l_p$ costs for $p > 1$, but not necessarily for $p = 1$, see \cref{fig:interiorregularity}b) implies that the interior of the convex hull of $\supp\,\mu$ is contained in the set $\{f > - \infty\}$.
\end{example}

\begin{figure}
  \centering\footnotesize
  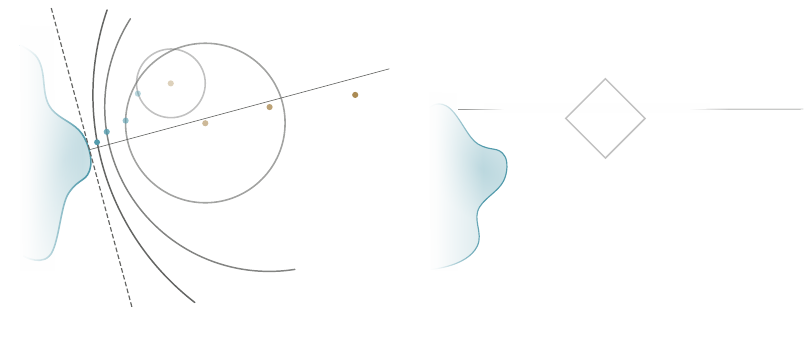
  \caption{Asymptotic regions of dominated cost.
    Both figures depict the sets $C(\tilde{x_n}, y_n)$ as defined in \eqref{eq:dominatedcost} for a sequence $(\tilde{x}_n, y_n)_{n\in\NN}$ with $\tilde{x}_n\to x$ and $y_n\to\infty$ as $n\to\infty$, where $u = \lim_{n\to\infty} (y_n - \tilde{x}_n)/\|y_n - \tilde{x}_n\|$.
    In sketch \textbf{(a)}, the cost function $c(x, y) = h\big(\|x - y\|\big)$ for the Euclidean norm and some strictly increasing $h$ is chosen, while sketch \textbf{(b)} shows a similar setting for costs based on the $l_1$ norm.
    In both examples, condition~1 of \cref{lem:interiorregularity} is always satisfied if $h$ is differentiable and $h'(a) \to\infty$ as $a\to\infty$ (see \cref{thm:interiorregularity}).
    Therefore, all Kantorovich potentials assume the value $-\infty$ on the region $C_\infty$ if mass is transported from $x$ towards infinity in direction $u$.
    For illustration, \textbf{(b)} shows the rather exceptional case where $C_\infty$ is \emph{not} a half space, which (in this example) can only happen if $u$ is aligned with one of the coordinate axes.
    More precisely, the depicted shape is only possible if the vertical coordinates of $y_n$ converge to the vertical coordinate of the apex of $C_\infty$.
  }
  \label{fig:interiorregularity}
\end{figure}

The reasoning displayed in \cref{ex:innereuclidean} can be extended to other costs on $\RR^d$.
In particular, it is possible to treat certain families of translation invariant costs $c(x, y) = h(x - y)$ that were investigated in the seminal analysis of the regularity of $c$-concave functions by \textcite{gangbo1996geometry}.
Their assumption (H2), the \emph{cone condition}, is frequently employed and widely re-cited.
We work with a slightly relaxed version of this condition.
\begin{itemize}
  \item[(C)]
  Let $\sphericalangle\colon\RR^d\times\RR^d \to [0, \pi]$ denote the angle between two vectors.
  Assume there exists a height $s > 0$ and an angle $\vartheta\in(0, \pi)$ such that each $p\in\RR^d$ far enough from the origin admits a direction $u\in\RR^d$, $\|u\| = 1$, with associated cone
  \begin{equation*}
    K_u
    =
    \big\{q\in\RR^d\,\big|\, \sphericalangle(u, q) \le \vartheta/2, \langle u, q\rangle \le s \big\},
  \end{equation*}
  for which $h(p) \ge h(p + q)$ for all $q\in K_u$.
\end{itemize}

The difference between (C) and (H2) is that the former only requires the existence of suitable cones for a \emph{fixed} height $s$ and angle $\vartheta$, and not for increasingly large values of $s$ and $\vartheta$.
This relaxation is sufficient for our purposes, since it already implies condition 2 in \cref{lem:interiorregularity} for arbitrary sequences.
Examples for cost functions that satisfy (C) but not (H2) are $h(x) = \|x\|_1^p$ or $h(x) = \|x\|_\infty^p$ with $p > 0$.

Besides the cone condition (H2), \textcite{gangbo1996geometry} place further restrictions on $h$ for a number of important conclusions.
For instance, their Theorem~3.3, which characterizes properties of $c$-transformed functions for quickly growing costs, requires superlinear growth $\lim_{x\to\infty} h(x) / \|x\| = \infty$ (H3) and convexity of $h$ (H4).
Adapting the requirements of \cref{lem:interiorregularity} to translation invariant costs in Euclidean spaces, we can propose a weaker assumption.
\begin{itemize}
  \item[(G)]
  Assume each $x\in\RR^d$ admits $\delta_x\in\RR^d$ with $h(x) - h(x + \delta_x) \to \infty$ and $\delta_x \to 0$ as $x\to\infty$.
\end{itemize}

\begin{lemma}{}{assumptionG}
  Let $h\colon\RR^d\to\RR_+$ be locally Lipschitz.
  Assumption (G) is implied by either of the following conditions.
  \begin{enumerate}[topsep=1ex, parsep=0.5ex, font={\normalfont\color{green!30!black}},label=\theenumi)]
    \item $h$ is convex and satisfies $\lim_{x\to\infty} h(x) / \|x\| = \infty$.
    \item $h(x) = g(\|x\|)$ for $g\colon\RR_+ \to \RR_+$ locally Lipschitz and $\lim_{a\to\infty} g'(a) = \infty$ whenever $g'$ exists.
    \item For $h_u = a \mapsto h(au)$ with $u\in\RR^d$ it holds that $\lim_{a\to\infty}\inf_{\|u\| = 1, b \ge a} h_u'(b) = \infty$ whenever $h_u'$ exists.
  \end{enumerate}
\end{lemma}
The third of these criteria states that the radial derivatives of $h$ uniformly diverge to infinity when the radius is increased.
In fact, condition 1 and condition 2 both imply condition 3, which in turn implies (G).
The proof is deferred to the appendix.
With this sufficient growth condition for the costs in place, we are now prepared to show the following regularity result.

\begin{theorem}{Interior regularity, Euclidean spaces}{euclideaninteriorregularity}
  Let $\X = \Y = \RR^d$ for $d\in\NN$ and consider costs $c\colon\RR^d\times\RR^d\to\RR_+$ of the form $c(x, y) = h(x - y)$ for $h\colon\RR^d\to\RR_+$.
  If assumptions (C) and (G) hold, then 
  \begin{equation*}
    \Sigma_\pi\subset\partial\,\supp\,\mu
  \end{equation*}
  for any $\mu,\nu\in\PC(\X)$ with $\ot_c(\mu, \nu) < \infty$, where $\Sigma_\pi$ is defined in \eqref{eq:sigma} for $\pi\in \couple(\mu, \nu)$ optimal.
\end{theorem}

\begin{proof}
  Assume that $x_n \to x\in\RR^d$ and $y_n \to \infty$.
  Set $z_n = x_n - y_n$.
  To show condition~1 of \cref{lem:interiorregularity}, define $\tilde{x}_n = x_n + \delta_{z_n}$ with $\delta_{z_n}$ provided by assumption~(G).
  Since $z_n\to\infty$, we find $\delta_{z_n} \to 0$ and $\tilde{x}_n \to x$.
  Furthermore, we observe
  % Furthermore, employing \eqref{eq:growthcondition}, we conclude in the limit $n\to\infty$ that
  \begin{equation*}
    c(x_n, y_n) - c(\tilde{x}_n, y_n)
    =
    h(z_n) - h(z_n + \delta_{z_n})
    \to \infty
  \end{equation*}
  as $n\to\infty$.
  To verify condition 2, let $p_n = \tilde{x}_n - y_n$.
  For $n$ sufficiently large, assumption~(C) guarantees the existence of directions $u_n$ such that $h(p_n) \ge h(p_n + q)$ for each $q\in K_{u_n}$.
  By picking a limit direction $u$ and selecting a suitable subsequence, we may assume $u_n \to u$.
  Let $U = \interior(x + K_u)$.
  We find $x\in\closure(U)$.
  Furthermore, it holds that $U \subset C_\infty = \limsup_{n\to\infty} C(\tilde{x}_n, y_n)$, which concludes the proof.
  To see this, one only needs to note that $\tilde{x}_n + K_{u_n} \subset C(\tilde{x}_n, y_n)$, since
  \begin{equation*}
    c(\tilde{x}_n, y_n)
    =
    h(\tilde{x}_n - y_n)
    \ge
    h(\tilde{x}_n - y_n + q)
    =
    c(\tilde{x}_n + q, y_n)
  \end{equation*}
  for all $q\in K_{u_n}$, and that $U \subset \limsup_{n\to\infty} \tilde{x}_n + K_{u_n}$ due to $\tilde{x}_n \to x$ and $u_n\to u$.
\end{proof}

Combining the previous result with \cref{cor:uniqueconnectedboundary} yields novel uniqueness statements for costs $c(x, y) = h(\|x-y\|)$ in Euclidean spaces.
In particular, it establishes that convexity of the function $h$ is not necessary for the uniqueness of Kantorovich potentials to be guaranteed.
This extends upon Proposition~B.2(b) of \textcite{bernton2021entropic} and Corollary~2.7 of \textcite{del2021central}, where convexity of $h$ is explicitly imposed.

\begin{remark}{\cite{gangbo1996geometry}, Theorem~3.3}{}
  In the proof of \cref{thm:euclideaninteriorregularity}, Assumption~(C) asserts that the set $C_\infty$ at $x\in\supp\,\mu$ includes the interior of a suitably shifted cone.
  If the stronger condition (H2) from \textcite{gangbo1996geometry} is assumed, the same argumentation can be repeated for progressively larger cone heights $s > 0$ and angles $\vartheta\in(0, \pi)$.
  This reveals that $C_\infty$ actually contains a half-space attached to the anchor point $x$, similar to \cref{ex:innereuclidean}.
  Consequently, the stronger cone condition (H2) implies that the set $\{f > -\infty\}$ for any Kantorovich potential $f\in\CC_c(\mu, \nu)$ contains the interior of the convex hull of the support of $\mu$.
  We furthermore conclude by \cref{lem:regularity} that $f$ is locally Lipschitz on the interior of the support, and that it is even locally semiconcave if $h$ is locally semiconcave.
  In this sense, \cref{thm:euclideaninteriorregularity} with (H2) generalizes Theorem~3.3 by \textcite{gangbo1996geometry} via weakening conditions (H3) and (H4) to (G).
\end{remark}

\cref{lem:interiorregularity} can also be applied in more general, non-Euclidean scenarios.
As an example, the following result establishes sufficient conditions for the validity of \eqref{eq:sigmaboundary} in geodesic spaces.
The proof strategy closely resembles the approach in \cref{thm:euclideaninteriorregularity}.
A central argument is the compactness of closed balls, which guarantees the existence of asymptotic directions of transport towards infinity.

\begin{theorem}{Interior regularity, geodesic spaces}{interiorregularity}
  Let $\X = \Y$ be a locally compact complete geodesic space with metric $d$ and let $c\colon\X^2\to\RR_+$ be of the form $c(x, y) = h\big(d(x, y)\big)$, where $h\colon\RR_+ \to \RR_+$ is locally Lipschitz with $\lim_{a\to\infty} h'(a) = \infty$ whenever $h'$ exists.
  Then
  \begin{equation*}
    \Sigma_\pi\subset\partial\,\supp\,\mu
  \end{equation*}
  for any $\mu,\nu\in\PC(\X)$ with $\ot_c(\mu, \nu) < \infty$, where $\Sigma_\pi$ is defined in \eqref{eq:sigma} for $\pi\in \couple(\mu, \nu)$ optimal.
\end{theorem}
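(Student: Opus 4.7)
The plan is to fix $x\in\Sigma$ and verify the two hypotheses of \cref{lem:interiorregularity}, which will then force $x\in\partial\,\supp\,\mu$. The strategy replaces the \enquote{straight line towards infinity} of the Euclidean example (\cref{ex:innereuclidean}) by a minimizing geodesic ray emanating from $x$, which exists thanks to Hopf--Rinow (locally compact complete geodesic spaces are proper, and every two points are joined by a minimizing geodesic).

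First, I would extract from $x\in\Sigma$ a sequence $(x_n,y_n)\in\supp\,\pi$ with $x_n\to x$ and $y_n\to\infty$. Using that $\pi$ fails to induce regularity on the relatively open neighborhoods $U_n=B(x,1/n)\cap\supp\,\mu$ with respect to the closed balls $K_n=\closure(B(x,n))$ (compact by properness), one obtains $x_n\in\pr_\X(\supp\,\pi)\cap U_n$ whose every $\pi$-partner lies outside $K_n$; any such partner $y_n$ satisfies $d(x,y_n)>n$, so $y_n\to\infty$. For each $n$, I pick a unit-speed minimizing geodesic $\gamma_n\colon[0,d_n]\to\X$ from $x_n$ to $y_n$, with $d_n=d(x_n,y_n)\to\infty$. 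Since the $\gamma_n$ are $1$-Lipschitz and $\X$ is proper, Arzelà--Ascoli on each compact subinterval combined with diagonal extraction yields a subsequence converging uniformly on compacta to a unit-speed geodesic ray $\gamma\colon[0,\infty)\to\X$ with $\gamma(0)=x$.

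To verify condition~1 of \cref{lem:interiorregularity}, I set $\tilde{x}_n=\gamma_n(\epsilon_n)$, so that $d(\tilde{x}_n,y_n)=d_n-\epsilon_n$ and $\tilde{x}_n\to x$ whenever $\epsilon_n\to 0$. The mean value theorem gives $c(\tilde{x}_n,y_n)-c(x_n,y_n)=-\epsilon_n\,h'(\xi_n)$ for some $\xi_n\in(d_n-\epsilon_n,d_n)$. Since $h'(a)\to\infty$, the quantity $N_n=\inf_{a\ge d_n-1}h'(a)$ tends to $\infty$; choosing $\epsilon_n=\min(1,1/\sqrt{N_n})$ makes $\epsilon_n\to 0$ while $\epsilon_n\,h'(\xi_n)\ge\sqrt{N_n}\to\infty$, delivering condition~1.

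The main obstacle is condition~2, because the natural candidate for a subset of $C_\infty$ touching $x$—namely, the image of the ray $\gamma$—is merely a curve, not an open set. My plan is to thicken it to $U=\bigcup_{t>0}B(\gamma(t),t/4)$. For any $x'\in B(\gamma(t),t/4)$, the triangle inequality combined with $d(\gamma_n(t),y_n)=d_n-t$ and $\gamma_n(t)\to\gamma(t)$ yields $d(x',y_n)\le d_n-t/2$ for all large $n$, whereas $d(\tilde{x}_n,y_n)=d_n-\epsilon_n$ with $\epsilon_n<t/2$ eventually. Since $h$ is eventually strictly increasing (as $h'\to\infty$) and both distances diverge, this gives $c(x',y_n)\le c(\tilde{x}_n,y_n)$, so $x'\in C(\tilde{x}_n,y_n)$ for all large $n$. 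Hence $U\subset C_\infty$, $U$ is open, and $x\in\closure(U)$ because $\gamma(t)\to x$ as $t\to 0^+$. Applying \cref{lem:interiorregularity} then delivers $x\in\partial\,\supp\,\mu$, as desired.
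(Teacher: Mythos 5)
Your proof is correct and takes essentially the same route as the paper's: both verify the two hypotheses of \cref{lem:interiorregularity} by exploiting properness via Hopf--Rinow, perturbing $x_n$ along the minimizing geodesic towards $y_n$ by an amount $\epsilon_n\to 0$ calibrated against $\inf_{a\ge d_n-1}h'(a)\to\infty$, and using the triangle inequality along that geodesic together with the eventual monotonicity of $h$ to exhibit an open subset of $C_\infty$ touching $x$. The only cosmetic difference is in condition~2, where the paper extracts a single limit point $u$ of the points at unit distance along the geodesics and takes $U=B(u,1)$, whereas you build a full limit ray via Arzel\`a--Ascoli and thicken it to a cone --- slightly more work for a slightly larger subset of $C_\infty$.
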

\begin{proof}
  Let $x_n\to x\in\X$ and $y_n\to\infty$.
  Since $\X$ is a proper metric space (e.g., by the Hopf-Rinow theorem as stated in \cite[Proposition~3.7]{bridson2013metric}), each closed ball in $\X$ is compact.
  This implies $r_n = d(x_n, y_n)\to\infty$.
  Next, let $g\colon\RR_+\to\RR$ be defined by $g(a) = \inf_{b\ge a} h'(b)$, where the infimum is taken over all $b$ for which $h'$ exists.
  This function is non-decreasing and satisfies $g \le h'$ and $g(a)\to\infty$ as $a\to\infty$.
  We also let $\gamma_{x' x''}\colon[0, d(x', x'')]\to \X$ denote a geodesic connecting $x'$ to $x''$ in $\X$.

  To show condition~1 of \cref{lem:interiorregularity}, let $\tilde{x}_n = \gamma_{x_n y_n}(t_n)$ for $0 < t_n < 1$, meaning that $x_n$ is pushed towards $y_n$ along a geodesic to generate perturbations $\tilde{x}_n$.
  The amount $t_n$ by which $x_n$ is pushed is chosen to satisfy $t_n\to 0$ and $t_n\,g(r_n - 1) \to \infty$ as $n\to\infty$.
  Since $d(x, \tilde{x}_n) \le d(x, x_n) + t_n$, the points $\tilde{x}_n$ indeed converge to $x$.
  We also note that  $d(\tilde{x}_n, y_n) = r_n - t_n$ and observe, as $n\to\infty$,
  \begin{equation*}
    c(x_n, y_n) - c(\tilde{x}_n, y_n)
    =
    h(r_n) -  h(r_n - t_n)
    =
    \int_{-t_n}^{0} h'(r_n + t) \dif t
    \ge
    t_n\,g(r_n - 1)
    \to
    \infty.
  \end{equation*}
  To verify condition~2, let $u_n = \gamma_{\tilde{x}_ny_n}(1)$ and pick a limit point $u\in\X$ of this sequence.
  Such a point exists, since the points $u_n$ are bounded and $\X$ is proper.
  By selecting a suitable subsequence, we may assume $u_n \to u$.
  Let $U$ be the open unit ball at $u$.
  We will show $U\subset \limsup C(\tilde{x}_n, y_n)$ and $x\in\closure(U)$.
  The latter is evident, since $d(x, u) \le d(x, \tilde{x}_n) + d(\tilde{x}_n, u_n) + d(u_n, u) \to 1$ as $n\to\infty$.
  To see the former, let $x'\in U$ and note that $d(x', u_n) < 1 = d(\tilde{x}_n, u_n)$ for large $n$.
  Then
  \begin{equation*}
    d(x', y_n)
    \le
    d(x', u_n) + d(u_n, y_n)
    <
    d(\tilde{x}_n, u_n) + d(u_n, y_n)
    =
    d(\tilde{x}_n, y_n).
  \end{equation*}
  For $n$ large enough such that $h$ can be assumed to be  non-decreasing, this implies $h\big(d(x', y_n)\big) \le h\big(d(\tilde{x}_n, y_n)\big)$ and thus $x'\in C(\tilde{x}_n, y_n)$.
\end{proof}

The proof above shows that the asymptotic region $C_\infty$ in \cref{lem:interiorregularity} at the very least contains the unit ball touching $x$ centered at a suitable $u\in\X$.
This argument can trivially be extended to balls of arbitrary radius $r > 1$, which provides additional insight about the shape of $C_\infty$.

% \todo{Open question: do we also get continuity from within the interior in the sense that $\lim_{x'\to x}f(x') = f(x) \ge -\infty$? I think so, but how to prove it?}

\section{Proofs of the main results}
\label{sec:uniquenessproof}

In the following, we formulate the proofs of the uniqueness statements for Kantorovich potentials under disconnected support, \cref{thm:uniquedisconnected} and \cref{cor:uniquedisconnected}.
For reasons of exposition, we start with \cref{thm:uniquedisconnected} under continuity assumption~2, before we document the adjustments necessary to prove the theorem under the alternative assumption~1, which requires a slightly different strategy.
Afterwards, we provide the arguments to extend \cref{thm:uniquedisconnected} to countable $I$ as claimed in \cref{cor:uniquedisconnected}.
For notational convenience, we denote the topological closure of a set $A$ by $\overline{A}$ instead of $\closure(A)$ in this section.

\begin{proof}[Proof of \cref{thm:uniquedisconnected} under assumption 2]
Recall decomposition~\eqref{eq:components} of the support of $\mu$ and $\nu$ into connected components $(\X_i)_{i\in I}$ and $(\Y_j)_{j\in J}$ for finite $I$ and countable $J$. 
Since we consider the second condition of \cref{thm:uniquedisconnected} first, we can assume for each $f^c\in\CCcc(\mu, \nu)$ and $j\in J$ with $\nu(\Y_j) > 0$  that $\smash{f^c|_{\Y_j}}$ is continuous and that the set $\tilde\Y_j = \supp\,\nu|_{\Y_j}$ is connected.
As $I$ is finite, each $\X_i$ is open in $\supp\,\mu$ and consequently satisfies $\mu(\X_i) > 0$.
Therefore, the $\X_i$-restricted optimal transport problem is well defined and we can fix a representative $f_i\in\CC_{c_{\X_i}}(\mu_{\X_i}, \nu_{\X_i})$ for each $i\in I$.
The uniqueness assumption in \cref{thm:uniquedisconnected} together with \cref{lem:uniqueness} implies that $f_i$ is uniquely determined on $\pr_\X(\supp\,\pi)\cap \X_i$ (up to an additive offset), so its actual choice does not matter to us.
Applying \cref{lem:restriction}, we conclude that each $f\in\CCc(\mu, \nu)$ can be assigned a unique \emph{offset vector} $a = (a_i)_{i\in I} \in \RR^{|I|}$ with components $a_i = f(x_i) - f_i(x_i)$, where the point $x_i \in \pr_\X(\supp\,\pi)\cap \X_i$ can be chosen arbitrarily.
We suggestively write $f = f_a$ if $f\in\CCcc(\mu, \nu)$ has offset vector $a$ and emphasize that the equality
\begin{equation}\label{eq:uniquedecomposition}
  f_a = \sum_{i\in I} \ind_{\X_i} \cdot (f_i + a_i)
\end{equation}
holds on $\pr_\X(\supp\,\pi)$.
Clearly, two Kantorovich potentials in $\CCc(\mu, \nu)$ have identical offset vectors if and only if they coincide on $\pr_\X(\supp\,\pi)$.
Therefore, almost sure uniqueness of $\CCc(\mu, \nu)$ follows if we can show that there is only a single feasible offset vector $a$ (up to an additive constant that is the same in each component).
To formalize this idea, we divide the support of $\pi$ into closed disjoint pieces $\Gamma_{ij} = \supp\,\pi \cap (\X_i\times\Y_j)$, and we say that two indices $i_1$ and $i_2$ in $I$ are \emph{linked} if there exists a \emph{contact index} $j\in J$ with $\nu(\Y_j) > 0$ and a \emph{contact point} $y\in\Y_j$ so that
\begin{equation}\label{eq:linkedindices2}
  y
  \in
  \overline{\pr_\Y(\Gamma_{i_1j})} \cap \overline{\pr_\Y(\Gamma_{i_2j})}.
\end{equation}
Intuitively, two indices in $I$ are linked if the masses transported from $\X_{i_1}$ and $\X_{i_2}$ to $\Y_j$ touch one another at a common point $y\in\Y_j$. 
In a first step, we establish that
\begin{equation*}
  \text{$i_1$ and $i_2$ are linked}
  \qquad\text{implies}\qquad 
  \text{$a_{i_1} - a_{i_2}$ is fixed}
\end{equation*}
under the adopted continuity assumptions on $\CCcc(\mu, \nu)$, where the right hand side indicates that the difference $a_{i_1} - a_{i_2}$ has to be the same for all feasible offset vectors.
In a second step, we then show that non-degeneracy of the optimal plan $\pi$ guarantees that there are enough contact points to connect all indices in $I$, which will conclude the proof.

\paragraph{Step 1.}
Let $i_1$ and $i_2$ be indices in $I$ that are linked through a contact point $y\in Y_j$ for $j\in J$.
According to \eqref{eq:linkedindices2}, there are sequences $(x_n, y_n)_n \subset\Gamma_{i_1j}$ and $(x'_n, y'_n)_n\subset\Gamma_{i_2j}$ such that $y_n\to y$ and $y'_n\to y$ in $Y_j$ as $n\to\infty$.
Since $f^{}_a \oplus f_a^c = c$ on $\supp\,\pi$ as well as $f_a = f_{i_1} + a_{i_1}$ and $f_a = f_{i_2} + a_{i_2}$ on $\pr_\X(\Gamma_{i_1j})$ respectively $\pr_\X(\Gamma_{i_2j})$ due to relation~\eqref{eq:uniquedecomposition}, we find
\begin{align*}
  a_{i_1} - a_{i_2}
  &=
  \big(c(x_n, y_n) - f_{i_1}(x_n) - f_a^c(y_n)\big)
  - \big(c(x'_n, y'_n) - f_{i_2}(x'_n) - f_a^c(y'_n)\big)
\intertext{for all $n\in\NN$.
Exploiting the continuity of $f_a^c|_{Y_j}$ at the contact point $y\in Y_j$, we thus obtain}
  a_{i_1} - a_{i_2}
  &=
  \lim_{n\to\infty} \big(c(x_n, y_n) - f_{i_1}(x_n) - f_a^c(y_n)\big)
  - \big(c(x'_n, y'_n) - f_{i_2}(x'_n) - f_a^c(y'_n)\big) \\
  &=
  \lim_{n\to\infty} c(x_n, y_n) - c(x'_n, y'_n) - f_{i_1}(x_n) + f_{i_2}(x'_n).
\end{align*}
Crucially, the limit in the second line exists and does not depend on $a$ anymore.
It only depends on the cost function $c$, the restricted potentials $f_i$, and the sets $\Gamma_{ij}$ (determined by $\pi$), whose topologies decide the contact point $y$ and the involved sequences.
Hence, knowing the value of $a_{i_1}$ determines the one of $a_{i_2}$ and vice versa.

\paragraph{Step 2.}
It is left to show that all indices are linked, at least indirectly, such that the vector $a$ is in fact determined by fixing a single component.
To do so, we consider an arbitrary decomposition $I = I_1 \cup I_2$ of the index set $I$ into a disjoint union of non-empty subsets, and show that there always exist $i_1\in I_1$ and $i_2 \in I_2$ that are linked.
First, define
\begin{equation}\label{eq:DefinitionIndexsetJ1}
  J_1
  =
  \big\{j\in J \,\big|\, \pi(\Gamma_{ij}) > 0~\text{for some $i\in I_1$}\big\}
\end{equation}
and analogously $J_2$.
Intuitively, an index $j$ is in $J_1$ (or $J_2$) if $\pi$ transports mass between $Y_j$ and some component $X_i$ with $i\in I_1$ (or $i\in I_2$).
We note that the sets $J_1$ and $J_2$ cannot be disjoint: if they were, then $\pi$ would transport all mass in $\bigcup_{i\in I_1} \X_{i}$ to $\bigcup_{j\in J_1} \Y_{j}$ and vice versa, contradicting the condition of non-degeneracy.
Formally, this follows from $0 < \sum_{i\in I_1} \mu(\X_i) < 1$ and
\begin{align*}
  \sum_{i\in I_1} \mu(\X_{i})
  &=
  \sum_{i\in I_1}\pi\big(\supp\,\pi \cap (\X_i \times \Y)\big) \\
  \textcolor{black!75!white}{\small\text{(definition of $J_1$)}}\quad
  &=
  \sum_{i\in I_1}\sum_{j\in J_1} \pi\left(\Gamma_{ij}\right) \\
  \textcolor{black!75!white}{\small\text{($J_1$ and $J_2$ disjoint)}}\quad
  &=
  \sum_{j\in J_1}\pi\big(\supp\,\pi \cap (\X \cap \Y_j)\big)
  =
  \sum_{j\in J_1} \nu(\Y_j),
\end{align*}
the former of which holds since $I_1$ is nonempty and a proper subset of $I$.
Therefore, $J_1$ and $J_2$ are not disjoint and we find some $j\in J_1\cap J_2$.
By definition of $J_1$ and $J_2$, this implies that the two sets
\begin{equation}\label{eq:B1B2}
  B_1 = \bigcup_{i\in I_1} \overline{p_\Y(\Gamma_{ij})} \subset Y_j
  \qquad\text{and}\qquad
  B_2 = \bigcup_{i\in I_2} \overline{p_\Y(\Gamma_{ij})} \subset Y_j
\end{equation}
have positive $\nu$-mass and are thus non-empty.
Since $\pi\big(\bigcup_{i\in I}\Gamma_{ij}\big) = \pi(\X\times\Y_j) = \nu(\Y_j) > 0$, we can apply (a suitably restricted version of) \cref{lem:measurability} to conclude that $ p_\Y\big(\bigcup_{i\in I} \Gamma_{ij}\big) = \bigcup_{i\in I} p_\Y(\Gamma_{ij})$ contains a subset that is dense in $\tilde\Y_j = \supp\,\nu|_{\Y_j}$.
Thus, we observe
\begin{equation}\label{eq:YjB1B2}
  \tilde\Y_j
  \subset
  \overline{\bigcup_{i\in I} p_\Y(\Gamma_{ij})}
  =
  \overline{\bigcup_{i\in I_1} p_\Y(\Gamma_{ij})}
  \cup
  \overline{\bigcup_{i\in I_2} p_\Y(\Gamma_{ij})}
  =
  B_1 \cup B_2,
\end{equation}
where the last equality hinges on the fact that $B_1$ and $B_2$ are closed (this is where we need the assumption that $I$ is finite).
Since $\nu(B_k) = \nu|_{\Y_j}(B_k) > 0$, we find that $\tilde{B}_k = B_k\cap\tilde\Y_j$ is non-empty for $k\in\{1, 2\}$.
Together with the connectedness of $\tilde\Y_j = \tilde{B}_1 \cup \tilde{B}_2$, this implies that $\tilde{B}_1$ and $\tilde{B}_2$ are not disjoint (since closed disjoint sets can be separated by open neighborhoods in metric spaces) and the intersection $\tilde{B}_1 \cap \tilde{B}_2$ hence contains at least one element $y\in\Y_j$.
In particular, there also exist $i_1\in I_1$ and $i_2\in I_2$ such that
\begin{equation*}
  y\in \overline{p_\Y(\Gamma_{i_1j})}\cap
\overline{p_\Y(\Gamma_{i_2j})},
\end{equation*}
which means that $i_1$ and $i_2$ are linked with contact index $j$ and contact point $y$.
We have thus shown that any proper decomposition $I = I_1 \cup I_2$ admits links between the components $I_1$ and $I_2$, implying that all indices in $I$ can be connected by a chain of links.
As discussed above, this makes the Kantorovich potentials $f_a\in\CCc(\mu, \nu)$ almost surely unique and finishes the proof of \cref{thm:uniquedisconnected} under continuity assumption~2.
\end{proof}

\begin{proof}[Proof of \cref{thm:uniquedisconnected} under assumption 1]
The preceding proof has to be adapted to some degree if we work with the slightly stronger continuity requirement that $f^c|_{\supp\,\nu}$ is continuous for each $f^c\in\CCcc(\mu, \nu)$, but in turn do not require any topological features of $\supp\,\nu|_{Y_j}$.
The main difference is that we now allow a contact point $y\in\supp\,\nu$ to be reached along sequences that hop through different components $\Y_j$ (while $j$ was considered fixed for such sequences before).
Thus, we let $\Gamma_i = \supp\,\pi \cap (\X_i \times \Y) = \bigcup_{j\in J} \Gamma_{ij}$ and this time define $i_1,i_2 \in I$ to be linked if there exists a contact point $y\in\supp\,\nu$ such that
\begin{equation}\label{eq:linkedindices1}
  y
  \in
  \overline{\pr_\Y(\Gamma_{i_1})} \cap \overline{\pr_\Y(\Gamma_{i_2})},
\end{equation}
which replaces definition \eqref{eq:linkedindices2}.
In particular, we do not care about the contact index anymore.
It is now easy to check that continuity of $f_a^c|_{\supp\,\nu}$ is sufficient for \textbf{step 1} of the proof above to work as before, and we find that $a_{i_1} - a_{i_2}$ is fixed if $i_1$ and $i_2$ are linked in the sense of \eqref{eq:linkedindices1}.

For \textbf{step 2}, we choose the same approach as above and again exploit the non-degeneracy of $\pi$ to find a suitable index $j\in J_1\cap J_2$ with  $J_1$ and $J_2$ defined as in \eqref{eq:DefinitionIndexsetJ1}.
Then, however, we define the sets 
\begin{equation*}
  B_1 = \Y_j \cap \bigcup_{i\in I_1} \overline{p_\Y(\Gamma_{i})}
  \qquad\text{and}\qquad
  B_2 = \Y_j \cap \bigcup_{i\in I_2} \overline{p_\Y(\Gamma_{i})}
\end{equation*}
somewhat differently, which is better aligned with \eqref{eq:linkedindices1}.
These sets are again closed (use that $I$ is finite) and have positive $\nu$-mass (follows from the definition of $J_1$ and $J_2$).
Furthermore, $\pr_\Y(\supp\,\pi) = \pr_\Y(\bigcup_{i\in I}\Gamma_i)$ is dense in $Y_j$, which leads to $Y_j = B_1 \cup B_2$ along similar lines as in equation \eqref{eq:YjB1B2}.
Connectedness of $Y_j$ thus shows that $B_1\cap B_2$ cannot be empty, from which the existence of $y\in\Y_j$ as well as $i_1\in I_1$ and $i_2\in I_2$ that satisfy \eqref{eq:linkedindices1} follows.
By the same argument as before, the claim of the theorem is established.
\end{proof}

\begin{proof}[Proof of \cref{cor:uniquedisconnected}]
There are two issues that arise in the proof of \cref{thm:uniquedisconnected} when $I$ is allowed to be countable.
The first is that some components $\X_i$ might now have a $\mu$-measure of zero, for which the notion of the $\X_i$-restricted transport problem ceases to make sense.
This can be reconciled by replacing the index set $I$ by $I_+ = \{i\in I\,|\,\mu(\X_i) > 0\}$ throughout the proof.
Then, representation \eqref{eq:uniquedecomposition} of $f_a$ only works on the set $\pr_\X(\supp\,\pi) \cap \bigcup_{i\in I_+} \X_i$, which is, however, sufficient for almost sure uniqueness.

The second issue concerns the sets $B_1$ and $B_2$ constructed in equation \eqref{eq:B1B2}.
For countable $I$, these sets do in general not have to be closed, which would invalidate the ensuing argumentation.
For the two settings described in \cref{cor:uniquedisconnected}, however, this can easily be fixed.
First, if the index set $I^j = \{i\in I\,|\,\pi(\X_i \times Y_j) > 0\}$ has finite cardinality, then we may as well work with the alternative sets
\begin{equation*}
  B_1 = \bigcup_{i\in I_1\cap I^j} \overline{p_\Y(\Gamma_{ij})} \subset Y_j
  \qquad\text{and}\qquad
  B_2 = \bigcup_{i\in I_2\cap I^j} \overline{p_\Y(\Gamma_{ij})} \subset Y_j,
\end{equation*}
for which the remainder of the proof works just as before. Since the unions are finite, these sets are closed.
Secondly, if $Y_j = \{y_j\}$ for $y_j\in Y$ consists of a single point only, then noting that $B_1$ and $B_2$ in \eqref{eq:B1B2} are both non-empty already establishes $B_1 = B_2 = Y_j$, directly yielding the desired contact point $y = y_j$.
In this case, the continuity of $f^c|_{Y_j}$ and the connectedness of $\supp\,\nu|_{Y_j}$ are trivially true.
\end{proof}

\paragraph{Acknowledgements.}
Thomas Staudt and Shayan Hundrieser acknowledge support of DFG RTG 2088 and Axel Munk of DFG CRC 1456.
Axel Munk is supported by, and Thomas Staudt and Shayan Hundrieser were in part funded by, the DFG under Germany's Excellence Strategy EXC 2067/1-390729940.

\printbibliography

\clearpage
\begin{appendix}

\section{Auxiliary results and omitted proofs}
\label{app:proofs}

In a brief assertion of his \citeyear{lebesgue1905fonctions} memoir, Henry Lebesgue famously sketched an erroneous proof claiming that the projection of Borel subsets of the plane $\RR^2$ onto one of the coordinate axes is again Borel.
The invalidity of this claim was uncovered in 1916 by Mikhail Suslin, which inspired the study of what is now called \emph{analytic sets} or \emph{Suslin sets}
\parencite{kanamori1995emergence}.
Placed into the context of our work, we learn that it can happen that projected sets of the form $\pr_\X(A)$ and $\pr_Y(A)$ for Borel sets $A\subset\XY$ are not Borel again.
However, these sets are analytic and thus \emph{universally measurable}.
In particular, they can be approximated from within and without by Borel sets whose difference is a null set.
This settles potential measurability issues in a satisfactory manner.

\begin{lemma}{}{measurability}
  Let $\X$ and $\Y$ be Polish and $\pi\in\couple(\mu, \nu)$ be a transport plan between $\mu\in\PC(\X)$ and $\nu\in\PC(\Y)$.
  Suppose $A\subset\X\times\Y$ is Borel with $\pi(A) = 1$.
  Then there exist Borel sets
  \begin{equation*}
    A_\mu \subset \pr_\X(A)
    \qquad\text{and}\qquad
    A_\nu \subset \pr_\Y(A)
  \end{equation*}
  such that $\mu(A_\mu) = \nu(A_\nu) = 1$.
  In particular, $A_\mu$ and $A_\nu$ are dense in $\supp\,\mu$ and $\supp\,\nu$.
\end{lemma}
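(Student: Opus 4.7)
The plan is to exploit the classical result from descriptive set theory that continuous images of Borel (indeed analytic) sets in Polish spaces are analytic, and that analytic sets are universally measurable. Specifically, $\pr_\X(A)$ is analytic as the projection of the Borel set $A$ along the Polish space $\Y$, and analogously for $\pr_\Y(A)$. The standard references indicated in the paragraph preceding the lemma (Kechris, etc.) provide this directly; the bulk of the argument then consists of assembling these facts.

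First, I would verify that $\mu(\pr_\X(A)) = 1$ in the sense of the completed measure. Since $A \subset \pr_\X(A) \times \Y$ and $\pi$ has marginal $\mu$, it follows that
\begin{equation*}
  1 = \pi(A) \le \pi\big(\pr_\X(A) \times \Y\big) = \mu^*\big(\pr_\X(A)\big),
\end{equation*}
where $\mu^*$ denotes the outer measure induced by $\mu$. Universal measurability of analytic sets then guarantees that $\pr_\X(A)$ lies in the $\mu$-completion of the Borel $\sigma$-algebra and has completed measure $1$. By inner regularity of the completion, there exists a Borel set $A_\mu \subset \pr_\X(A)$ with $\mu(A_\mu) = 1$. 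The symmetric argument applied to the second projection produces $A_\nu \subset \pr_\Y(A)$ Borel with $\nu(A_\nu) = 1$.

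For density, I would argue that any Borel set $B$ with $\mu(B) = 1$ is dense in $\supp\,\mu$: otherwise $\supp\,\mu \setminus \closure(B)$ would be a relatively open, non-empty subset of $\supp\,\mu$, and by definition of the support it would have strictly positive $\mu$-measure, contradicting $\mu(B) = 1$. Applying this to $A_\mu$ and $A_\nu$ yields density in $\supp\,\mu$ and $\supp\,\nu$ respectively.

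The main obstacle, if any, is simply choosing the right formulation of universal measurability to cite, since this is really the content of the result. Everything else — the marginal identity $\pi(\pr_\X(A) \times \Y) = \mu(\pr_\X(A))$, inner regularity, and the density argument via the definition of support — is routine and independent of the descriptive set theory.
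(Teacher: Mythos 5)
Your proposal is correct and follows essentially the same route as the paper: projections of Borel sets are analytic, analytic sets are universally measurable, the marginal relation $A\subset\pr_\X(A)\times\Y$ forces outer measure one, and an inner Borel approximation of the completed set yields $A_\mu$; the density claim is the standard support argument. The paper phrases the completion step as $\pr_\X(A)=A_\mu\cup N$ with $N$ contained in a Borel null set rather than via outer measure and inner regularity, but this is the same fact.
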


\begin{proof}[Proof of \cref{lem:measurability}]
  We only show the statement for $\mu$ since the one for $\nu$ follows equivalently.
  According to \cite[Exercise~14.3]{kechris2012classical}, the set $\pr_\X(A) \subset \X$ is \emph{analytic}, i.e., the continuous image of a Polish space.
  By \cite[Theorem~21.10]{kechris2012classical}, every analytic set is \emph{universally measurable} (see Definition~12.5 for the term \emph{standard Borel space}), which implies that $\pr_\X(A) = A_\mu \cup N$ where $A_\mu\subset\X$ is Borel and $N$ is a subset of a Borel $\mu$-null set $M\subset\X$ (see \cite[Section~17.A]{kechris2012classical}, for respective definitions).
  It is left to show that $\mu(A_\mu) = \mu(A_\mu\cup M) = 1$, which follows from observing that $A \subset \pr_\X(A) \times \Y \subset (A_\mu\cup M) \times \Y$ and thus
  \begin{equation*}
    \mu(A_\mu\cup M) = \pi\big((A_\mu\cup M)\times \Y\big) \ge \pi(A) = 1.
    \qedhere
  \end{equation*}
\end{proof}

Most of the time, we employ \cref{lem:measurability} with the choice $A = \supp\,\pi$ for an optimal transport plan $\pi$, making sure that properties on the sets $\pr_\X(\supp\,\pi)$ and $\pr_\Y(\supp\,\pi)$ are valid $\mu$- and $\nu$-almost surely.
We next highlight a simple consequence of \cref{lem:regularitycompact}, relating the points $x\in\supp\,\mu$ with $x\not\in\pr_\X(\supp\,\pi)$ to the ones where $\pi$ does not induce regularity.

\begin{lemma}{}{notpartiallycompact}
  Let $X$ and $Y$ be Polish, $\mu\in\PC(\X)$, $\nu\in\PC(\Y)$, and $c\colon\XY\to\RR_+$ continuous such that $\ot_c(\mu, \nu) < \infty$.
  Let $\pi\in\couple(\mu, \nu)$ be an optimal transport plan. Then
  \begin{equation*}
    \closure\big(\supp\,\mu \setminus \pr_\X(\supp\,\pi)\big)
    \subset
    \Sigma_\pi
    =
    \big\{x\in\supp\,\mu\,\big|\,\pi~\text{does not induce regularity at}~x\big\}
  \end{equation*}
  and the set $\Sigma_\pi$ is closed in $\supp\,\mu$.
\end{lemma}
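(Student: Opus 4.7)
The plan is to prove both assertions of \cref{lem:notpartiallycompact} using \cref{lem:regularitycompact} together with the definition of induced regularity. The two claims are essentially dual: the first says that points off $\pr_\X(\supp\,\pi)$ already lie in $\Sigma$, and the second (closedness of $\Sigma$) is what lets us pass to the closure on the left-hand side.

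First I would establish the inclusion $\supp\,\mu\setminus\pr_\X(\supp\,\pi)\subset\Sigma$ by contraposition. If $x\in\supp\,\mu\setminus\Sigma$, then by definition there is a relatively open neighbourhood $U\subset\supp\,\mu$ of $x$ on which $\pi$ induces regularity. But \cref{lem:regularitycompact} asserts $U\subset\pr_\X(\supp\,\pi)$, hence $x\in\pr_\X(\supp\,\pi)$. This is a one-liner once the lemma in the preceding section is invoked.

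Next I would prove that $\Sigma$ is relatively closed in $\supp\,\mu$ by showing that $\supp\,\mu\setminus\Sigma$ is relatively open. Let $x\in\supp\,\mu\setminus\Sigma$. By the definition of induced regularity at $x$, there is a relatively open neighbourhood $U\subset\supp\,\mu$ of $x$ and a compact $K\subset\Y$ such that condition \eqref{eq:partiallycompact} holds for this pair $(U,K)$. For any other point $x'\in U$, the same $U$ serves as a relatively open neighbourhood of $x'$, and the very same pair $(U,K)$ certifies that $\pi$ induces regularity on $U$, hence at $x'$. Therefore $U\subset\supp\,\mu\setminus\Sigma$, which shows openness.

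Finally, combining the two facts closes the argument: since $\Sigma$ is closed in $\supp\,\mu$ and $\supp\,\mu$ is closed in $\X$, the set $\Sigma$ is closed in $\X$, so taking closures in the inclusion from the first step yields $\closure\!\big(\supp\,\mu\setminus\pr_\X(\supp\,\pi)\big)\subset\Sigma$. I do not expect any genuine obstacle here; the only subtlety worth spelling out carefully is that \enquote{$\pi$ induces regularity at $x$} is a condition on a neighbourhood (not on the single point $x$), which is precisely what makes the property open and thus transports to nearby points using the same $(U,K)$.
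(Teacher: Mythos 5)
Your argument is correct and matches the paper's proof essentially step for step: closedness of $\Sigma$ via the observation that the same pair $(U,K)$ certifies induced regularity at every point of $U$, and the inclusion via $\supp\,\mu\setminus\Sigma\subset\pr_\X(\supp\,\pi)$ from \cref{lem:regularitycompact}, combined with closedness to pass to the closure. Your extra remark that closures in $\supp\,\mu$ and in $\X$ agree (since $\supp\,\mu$ is closed) is a reasonable detail the paper leaves implicit.
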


\begin{proof}
  To see that $\Sigma_\pi$ is closed, assume $\pi$ to induce regularity at $x\in\supp\,\mu$ with relatively open $U\subset\supp(\mu)$ and compact $K\subset\Y$.
  Then $\pi$ is also inducing regularity at any other $x'\in U$ with the same $U$ and $K$, showing that $\supp\,\mu\setminus\Sigma_\pi$ is open.
  It now suffices to note that $\supp\,\mu\setminus\Sigma_\pi \subset\pr_\X(\supp\,\pi)$ according to \cref{lem:regularitycompact}, which, together with closedness of $\Sigma_\pi$, implies the inclusion.
\end{proof}

We now turn to \cref{lem:fullrestriction}, which states that Kantorovich potentials behave as expected when restricted to subsets $\Xres\subset\X$ and $\Yres\subset\Y$ with full $\mu$- and $\nu$-mass.
The proof of this statement follows the reasoning behind \cite[Lemma~5.18 and Theorem~5.19]{villani2008optimal}.
Cases of particular interest to us are restrictions to the (interior of the) support of $\mu$ or $\nu$ (if the boundary carries no mass).
Then the subsets $\Xres$ and $\Yres$ are either closed or open, and so they are always Borel and Polish.

\begin{proof}[Proof of \cref{lem:fullrestriction}]
  By assumption, $\Xres\subset\X$, $\Yres\subset\Y$, and $\XYres\subset\XY$ are Borel and Polish subsets with $\mu(\Xres) = \nu(\Yres) = 1$.
  Since the Borel $\sigma$-algebra of a restricted spaces coincides with the respective subspace $\sigma$-algebra (see, e.g., \cite[Lemma~1.6]{kallenberg1997foundations}), it is easy to recognize that $T_c(\mu, \nu) = T_\cres(\mu, \nu)$ with equal optimal plans $\pi\in\CC(\mu, \nu)$, where we permissively identify the measures $\mu$, $\nu$, and $\pi$ with their restrictions to the Borel sets $\Xres$, $\Yres$, and $\XYres$ of full mass.

  Recall that $\tilde\osupp = \supp\,\pi \cap (\XYres)$ and observe $\pi(\tilde\osupp) = 1$, which implies that $\tilde\osupp$ is dense in the support of $\pi$.
  We begin with the claim on restrictions. Let $f\in \CCc(\mu, \nu)$ and define the $\cres$-concave function $\fres\colon\Xres\to\RRext$ via
  \begin{equation*}
    \fres(x)
    =
    \big(f^c|_{\Yres}\big)^{\cres}(x)
    =
    \inf_{y\in\Yres} c(x, y) - f^c(y).
  \end{equation*}
  For any $(x_0, y_0) \in \tilde\osupp$, we calculate
  \begin{align*}
    \fres(x_0)
    &=
    \inf_{y\in\Yres} c(x_0, y) - f^c(y) 
    \le
    c(x_0, y_0) - f^c(y_0)
    =
    f(x_0), \\
    \fres(x_0)
    &=
    \inf_{y\in\Yres} c(x_0, y) - \inf_{x\in\X} c(x, y) + f(x)
    \ge
    f(x_0),
  \end{align*}
  which shows that $\fres = f$ on $\pr_\X(\tilde\osupp)$. Similarly,
  \begin{align*}
    \fres^{\cres}(y_0)
    &=
    \inf_{x\in\Xres} c(x, y_0) - \fres(x)
    \le
    c(x_0, y_0) - f(x_0)
    =
    f^c(y_0), \\
    \fres^{\cres}(y_0)
    &=
    \inf_{x\in\Xres} c(x, y_0) - \inf_{y\in\Yres} c(x, y) + f^c(y)
    \ge
    f^c(y_0),
  \end{align*}
  which asserts $\fres^{\cres} = f^c$ on $\pr_\Y(\tilde\osupp)$.
  Since the set $\tilde\Gamma$ has full $\pi$-measure, it follows that
    $ \OT_{\cres}(\mu, \nu)
    =
    \OT_c(\mu, \nu)
    =
    \pi\,(f \oplus f^c)
    =
    \pi\,(\tilde{f} \oplus \tilde{f}^\cres)
   $.
  This shows $\fres\in \CC_\cres(\mu, \nu)$ and thus proves the first statement.

  We next turn towards the claim on extending potentials, where we begin with $\fres\in \CC_\cres(\mu, \nu)$ and observe $\tilde\osupp \subset \partial_{\cres}\fres$ (which is true since $\tilde\osupp$ equals the support of the measure $\pi$ restricted to $\XYres$).
  We extend $\fres$ to a function $f\colon\X\to\RR$ on all of $\X$ via defining
  \begin{equation*}
    f(x) = \inf_{y\in\Yres} c(x, y) - \fres^\cres(y).
  \end{equation*}
  This function is $c$-concave, since it is the $c$-transform of a function that equals $\fres^\cres$ on $\Yres$ and $-\infty$ on $\Y\setminus\Yres$.
  Furthermore, since $\fres$ is $\cres$-concave, the definition of $f$ directly shows that it coincides with $\fres$ ($= \fres^{\cres\cres}$) on $\Xres$.
  For $(x_0, y_0)\in\tilde\osupp$, we furthermore note that
  \begin{align*}
    f^c(y_0)
    &=
    \inf_{x\in\X} c(x, y_0) - f(x)
    \le
    c(x_0, y_0) - f(x_0) = \fres^\cres(y_0), \\
    f^c(y_0)
    &=
    \inf_{x\in\X} c(x, y_0) - \inf_{y\in\Yres} c(x, y) + \fres^{\cres}(y)
    \ge
    \fres^c(y_0),
  \end{align*}
  and thus $f^c = \fres^\cres$ on $\pr_Y(\tilde\osupp)$. The optimality of
  $f$ is checked like above, yielding $f\in\CCc(\mu, \nu)$.
\end{proof}

Note that the sets of consensus $\pr_\X(\tilde\osupp)$ and $\pr_\Y(\tilde\osupp)$ in \cref{lem:fullrestriction} can be enlarged to the (potentially slightly bigger) sets $\pr_\X(\supp\,\pi)\cap\Xres$ and $\pr_\Y(\supp\,\pi)\cap\Yres$ when restricting Kantorovich potentials.
To prove this, the density of $\tilde\osupp$ in $\supp\,\pi$ can be exploited in combination with \cref{lem:regularity}.
For extending a potential $\fres$, the proof above shows that it is always possible to pick an extension that agrees with $\fres$ on all of $\Xres$, or alternatively to pick an extension whose $c$-transform agrees with $\fres^\cres$ on all of $\Yres$.

We next prove the claim stated in the context of equation \eqref{eq:strictlyseparatedcosts}, which provides an example where degeneracy of the optimal transport plan leads to non-unique Kantorovich potentials.

\begin{lemma}{}{ambiguous}
  Let $X = Y$ be Polish, $\mu = \nu \in \PC(\X)$ with $\supp(\mu) = \X_1 \cup \X_2$, and $c\colon\X^2\to\RR_+$ be continuous and symmetric with $c(x, x) = 0$ for all $x\in\X$.
  If
  \begin{equation*}
      \Delta = \inf_{x_1\in\X_1, x_2\in\X_2} c(x_1, x_2) > 0,
  \end{equation*}
  then for all $a,b\in\RR$ with $|a-b|\le \Delta$, there exists $f_{a,b}\in\CCc(\mu, \mu)$ such that $f_{a,b} = a$ on $\X_1$ and $f_{a,b} = b$ on $\X_2$.
\end{lemma}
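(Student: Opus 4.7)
The plan is to directly construct $f_{a,b}$ as a $c$-transform and verify both the prescribed values and the optimality condition. Since $c\ge 0$ and $c(x,x)=0$, the identity coupling $\pi = (\id,\id)_\#\mu$ is optimal, yielding $T_c(\mu,\mu)=0$, and every $f\in\CCc(\mu,\mu)$ is a Kantorovich potential exactly when $f(x)+f^c(x)=0$ for $\mu$-almost every $x\in\X_1\cup\X_2$.

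With this in mind, I would define an auxiliary function $g_{a,b}\colon \X \to \RRext$ by $g_{a,b}(y) = -a$ for $y\in\X_1$, $g_{a,b}(y)=-b$ for $y\in\X_2$, and $g_{a,b}(y)=-\infty$ elsewhere, and then set $f_{a,b} = g_{a,b}^{c}$, which is $c$-concave by construction. A direct computation gives, for any $x\in\X$,
\begin{equation*}
  f_{a,b}(x) = \min\Bigl\{\inf_{y\in\X_1}c(x,y) + a,\ \inf_{y\in\X_2}c(x,y) + b\Bigr\}.
\end{equation*}
For $x\in\X_1$, the first infimum is $0$ (attained at $y=x$) and the second is at least $\Delta$; hence $f_{a,b}(x)=\min\{a, \Delta+b\}=a$ as soon as $a-b\le\Delta$. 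The symmetric argument on $\X_2$ requires $b-a\le\Delta$, and together this is exactly the assumption $|a-b|\le\Delta$.

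It remains to verify that $f_{a,b}^c(x)=-a$ on $\X_1$ (and $-b$ on $\X_2$), which gives the optimality condition $f_{a,b}(x)+f_{a,b}^c(x)=0=c(x,x)$ pointwise on $\supp\,\mu$. For the upper bound one plugs in $x'=x$ into $f_{a,b}^c(x)=\inf_{x'}c(x',x)-f_{a,b}(x')$. For the lower bound one observes that $f_{a,b}=g_{a,b}^c$ implies $f_{a,b}(x')\le c(x',x)-g_{a,b}(x) = c(x',x)+a$ whenever $x\in\X_1$, so $c(x',x)-f_{a,b}(x')\ge -a$ uniformly in $x'$. Both bounds together give $f_{a,b}^c(x)=-a$, and the analogous argument works on $\X_2$.

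With these pointwise identities in hand, the integration step is immediate: $\pi(f_{a,b}\oplus f_{a,b}^c) = \int_{\X_1\cup\X_2}(f_{a,b}(x)+f_{a,b}^c(x))\,\dif\mu(x) = 0 = T_c(\mu,\mu)$, so $f_{a,b}\in\CCc(\mu,\mu)$ as required. I do not expect any real obstacle: the only subtlety is verifying that the two candidates in the infimum defining $g_{a,b}^c$ are ordered the right way on $\X_1$ and $\X_2$, which is precisely what the hypothesis $|a-b|\le\Delta$ enforces via the strict cost separation \eqref{eq:strictlyseparatedcosts}.
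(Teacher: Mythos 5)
Your proposal is correct and follows essentially the same route as the paper: you construct the same auxiliary function $g_{a,b}$ (equal to $-a$ on $\X_1$, $-b$ on $\X_2$, $-\infty$ elsewhere), take $f_{a,b}=g_{a,b}^c$, use $c\ge 0$, $c(x,x)=0$ and the $\Delta$-separation to pin down the values on $\X_1$ and $\X_2$, and verify $f_{a,b}^c=-f_{a,b}$ on the support to confirm optimality. The only cosmetic difference is that you establish the lower bound $f_{a,b}^c\ge g_{a,b}$ by a direct computation where the paper cites $g\le g^{cc}$; the content is identical.
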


\begin{proof}
  It is apparent that $\OT_c(\mu, \mu) = 0$.
  For real numbers $a,b\in \mathbb{R}$ with $| a-b|\leq \Delta$, we define the map $g\colon\X \to \RR\cup\{-\infty\}$ via
  \begin{equation*}
    g(x) =
    \begin{cases}
      -a & \text{if } x \in \X_1,\\
      -b & \text{if } x \in \X_2,\\
      -\infty & \text{if } x \notin \X_1\cup \X_2.
    \end{cases}
  \end{equation*}
  For $x \in \X_1$, the $c$-concave function $f_{a,b}
  \coloneqq g^{c}$ fulfills
  \begin{equation*}
      f_{a,b}(x)
      = \inf_{y \in \X } c(x,y) - g(y) = \inf_{y\in \X} \begin{cases}
       a & \text{ for } y = x\in \X_1,\\
       c(x,y)+ a & \text{ for } y\in \X_1,\\
       c(x,y)+ b & \text{ for } y\in \X_2, \\
       \infty & \text{ for } y\notin \X_1 \cup \X_2. \\
      \end{cases}
  \end{equation*}
  Since $c \ge 0$ and $a \le \Delta + b \le c(x, y) + b$ for $x\in\X_1, y\in\X_2$, we find $f_{a,b} = a$ on $\X_1$.
  Likewise, we also find $f_{a,b} = b$ on $\X_2$.
  By a similar argument, it follows that $f_{a,b}^c \le -a$ on $\X_1$ and $\smash{f_{a,b}^c} \le -b$ on $\X_2$.
  Due to the lower bound $g \le g^{cc} = \smash{f_{a,b}^c}$ (see \cite[Proposition~5.8]{villani2008optimal}), we conclude that $f^c_{a, b} = -f_{a, b}$ on $\X_1\cup\X_2$.
  This implies $\OT_c(\mu, \mu) = 0 = \pi\,\big(f_{a,b} \oplus f_{a,b}^c\big)$ for any optimal $\pi$, asserting that $f_{a,b}$ is indeed a Kantorovich potential.
\end{proof}

Next, we address the claim raised in \cref{lem:lipschitzgradients}, which states that (locally) Lipschitz continuous functions on connected manifolds coincide (up to constants) if their gradients coincide almost surely (in charts).
This is a simple generalization of corresponding results on $\RR^d$, which can, for example, be gathered from considerations in \textcite{qi1989maximal}.

\begin{proof}[Proof of \cref{lem:lipschitzgradients}]
  Let $f_1, f_2\colon M \to \RR$ be locally Lipschitz on a $d$-dimensional smooth manifold $M$, and let $(\varphi_x)_{x\in M}$ be a family of charts with $x\in U_x = \domain\,\varphi_x$ for all $x\in M$.
  By translating, restricting, and rescaling the charts, we can assume that $\range\,\varphi_x = B_1 \subset \RR^d$ is the unit ball and that $f_{i,x} = f_i\circ\varphi_x^{-1}\colon B_1 \to \RR$ is Lipschitz for each $x\in M$ and $i \in \{1, 2\}$.
  Since $\nabla f_{1,x} = \nabla f_{2,x}$ holds by assumption on a set with full Lebesgue measure, we can conclude (e.g., by formula~(2) of \cite{qi1989maximal}) that $f_{1,x} - f_{2,x} = c_x$ on all of $B_1$, where $c_x\in\RR$ is a constant.
  This implies $f_1|_{U_x} - f_2|_{U_x} = c_x$ as well.
  To see that $c_x$ is actually independent of $x$, note that $c_{x'} = c_x$ holds for any $x'\in U_x$, since then $U_x\cap U_{x'} \neq 0$.
  Thus, $x\mapsto c_x$ is a locally constant function.
  The connectedness of $M$ implies that it is also constant on the whole space.
  To see this, just note that the set $V = \{x\in\X\,|\,c_x = c_{x_0}\} \neq \emptyset$ and its complement are both open (for some fixed $x_0\in\X$), which makes $V$ open and closed.
  Hence, $V = M$ and the claim $f_1 - f_2 = c_{x_0}$ is established.
\end{proof}

We now turn to \cref{rem:semiconcave}, where it is claimed that \cref{cor:uniqueconnectedlipschitz} is still true if $c(\cdot, y)$ is assumed to be locally semiconcave uniformly in $y\in\Y$ (instead of locally Lipschitz uniformly in $y$).

\begin{proof}[Proof of \cref{rem:semiconcave}]
  We adhere to the general proof strategy of \cref{thm:uniqueconnected}, but will provide additional details for some of the arguments.
  For every $x\in\X$, let $\varphi_x \colon U_x \to V_x \subset \RR^d$ denote a chart around $x$.
  By restricting and translating, we can assume that $V_x$ is convex with $\varphi(x) = 0\in V_x$.
  Recalling equation~\eqref{eq:semiconcave}, we may also assume that $v \mapsto c\big(\varphi^{-1}_x(v), y\big) - \lambda_x \|v\|^2$ is concave on $V_x$ for some $\lambda_x > 0$ and all $y\in\Y$.
  Due to the nature of $c$-conjugates as infima, we find that the function $v \mapsto h_x(v) = f\big(\varphi^{-1}_x(v)\big) - \lambda_x \|v\|^2$ is concave as well for any $f\in\CCc(\mu, \nu)$ \parencite[Theorem~5.5]{rockafellar2015convex}.

  We first show that $f > -\infty$ on $M = \interior(\supp\,\mu)$.
  Indeed, assume that there existed a point $x\in M$ with $f(x) = -\infty$.
  Then $h_x(0) = -\infty$, and, since the effective domain $h_x^{-1}(\RR)\subset V_x$ is convex, it followed that $h_x^{-1}\big(\{-\infty\}\big)$ contained at least an open half-space (intersected with $V_x$) touching the origin.
  Hence, $f = -\infty$ would have to hold on an open subset of $M \subset \supp\,\mu$, which cannot be true, since $\pr_\X(\supp\,\pi)$ is dense in $\supp\,\mu$ and $f > -\infty$ on $\pr_\X(\supp\,\pi)$ due to $\supp\,\pi \subset \partial_c f$.
  Since (locally semi-)concave functions are locally Lipschitz in the interior of their effective domain \parencite[Theorem~10.4]{rockafellar2015convex}, we can conclude that each Kantorovich potential is locally Lipschitz on all of $M$.
  
  Next, since $\Gamma = \supp\,\pi$, we find a Borel set $A \subset \pr_\X(\Gamma) \subset \X$ that satisfies $\mu(A) = 1$ (\cref{lem:measurability}).
  Hence, $B = \range\,\varphi \setminus \varphi(A \cap \domain\,\varphi)$ is a $\varphi_\#\mu$-null set for any chart $\varphi$ of $M$.
  Due to condition~\eqref{eq:absolutecontinuity}, it is also a Lebesgue-null set and we conclude that $\pr_\X(\Gamma)$ has full Lebesgue measure in charts of $M$.
  We can now argue as in \cref{thm:uniqueconnected} to find that any two Kantorovich potentials $f_1, f_2\in\CCc(\mu, \nu)$ coincide on $M$. 
  It remains to show that $f_1 = f_2$ holds on the boundary of $\supp\,\mu$ as well.
  Let $x\in\partial\,\supp\,\mu$ and let $(x_n)_{n\in\NN} \subset M$ be a sequence converging to $x$.
  Then it holds that $\lim_{n\to\infty} f_i(x_n) = f_i(x)$ for $i\in\{1, 2\}$.
  This can be seen as follows: if $f_i(x) > -\infty$, then the limit holds since (locally semi-)concave functions are continuous on their effective domain \parencite[Theorem~10.1]{rockafellar2015convex}, and if $f_i(x) = -\infty$, then the limit holds due to upper-semicontinuity of $f_i$.
  Since $f_1(x_n) = f_2(x_n)$ for all $n$, this establishes equality of $f_1$ and $f_2$ on the full support.
\end{proof}

Finally, to conclude this appendix, we prove that the various conditions established in \cref{lem:assumptionG} imply assumption~(G) required by \cref{thm:euclideaninteriorregularity}.

\begin{proof}[Proof of Lemma~\ref{lem:assumptionG}]
  We first verify that condition 3 implies (G).
  Let $g(a) = \inf_{\|u\| = 1, b \ge a} h'_u(b)$.
  This function is non-decreasing and satisfies $g(a) \to \infty$ as $a\to\infty$ by assumption.
  Let $0 < t_a < 1$ be such that $t_a\to 0$ and $t_a\,g(a - 1) \to \infty$ as $a\to\infty$.
  Set $\delta_x = - t_{\|x\|} \hat{x}$, where $\hat{x} = x/\|x\|$ for any $x\in\RR^d\setminus\{0\}$.
  Then $\delta_x\to 0$ as well as
  \begin{equation*}
    h(x) - h(x + \delta_x)
    =
    h_{\hat{x}}\big(\|x\|\big) - h_{\hat{x}}\big(\|x\| - t_{\|x\|}\big)
    =
    \int_{-t_{\|x\|}}^0 \!\!\! h'_{\hat{x}}\big(\|x\| + t\big) \dif t
    \ge
    t_{\|x\|} g\big(\|x\| - 1\big)
    \to
    \infty
  \end{equation*}
  as $x\to\infty$, which establishes (G).
  Next, it is straightforward to show that condition 2 implies condition 3 and thus (G), as the infimum over directions $u$ becomes immaterial in this case.
  Finally, assume condition 1, i.e., that $h$ is convex and satisfies the superlinear growth condition $\lim_{x\to\infty} h(x)/ \|x\| = \infty$.
  We demonstrate that this implies condition 3 as well.
  For if it violated condition 3, there would be directions $u_n$ and values $b_n > 0$ such that $b_n \to \infty$ as $n\to\infty$ while $h'_{u_n}(b_n)$ is well-defined and uniformly bounded by some $K > 0$.
  Since $h'_u$ is monotonically increasing for every direction $u$ due to the convexity of $h_u$, we conclude
  \begin{equation*}
    h(b_n u_n) = h_{u_n}(b_n)
    \le
    h(0) + b_n h'_{u_n}(b_n)
    \le
    h(0) + b_n K.
  \end{equation*}
  This contradicts the superlinear growth property.
\end{proof}

\end{appendix}

\end{document}